\newcommand\version{June 09, 2025}
\newtheorem{theorem}{Theorem}[section]
\newtheorem{proposition}[theorem]{Proposition}
\newtheorem{lemma}[theorem]{Lemma}
\newtheorem{corollary}[theorem]{Corollary}
\theoremstyle{definition}
\newtheorem{definition}[theorem]{Definition}
\theoremstyle{remark}
\newtheorem{remark}[theorem]{Remark}
\numberwithin{equation}{section}
\newcommand{\C}{\mathbb{C}}
\newcommand{\D}{\mathcal{D}}
\renewcommand{\epsilon}{\varepsilon}
\newcommand{\F}{\mathcal{F}}
\newcommand{\loc}{{\rm loc}}
\newcommand{\N}{\mathbb{N}}
\renewcommand{\phi}{\varphi}
\newcommand{\R}{\mathbb{R}}
\newcommand{\Z}{\mathbb{Z}}
\DeclareMathOperator{\spann}{span}
\DeclareMathOperator{\supp}{supp}
\DeclareMathOperator{\sgn}{sgn}
\def\bs{\mathbb{S}}
\def\ca{\mathcal{A}}
\def\ce{\mathcal{E}}
\def\ci{\mathcal{I}}
\def\cj{\mathcal{J}}
\def\cl{\mathcal{L}}
\def\fl{\mathfrak{L}}
\def\cq{\mathcal{Q}}
\def\Rd{{\mathbb{R}^d}}
\newcommand{\me}[1]{\mathrm{e}^{#1}}
\newcommand{\one}{\mathbf{1}}
\begin{document}

\title[Heat kernel bounds --- \version]{Heat kernel bounds for the fractional Laplacian with Hardy potential in angular momentum channels}

\author[K. Bogdan]{Krzysztof Bogdan}
\address[Krzysztof Bogdan]{Department of Pure and Applied Mathematics, Wroc\l aw University of Science and Technology, Hoene-Wro\'nskiego 13C, 50-376 Wroc\l aw, Poland}
\email{krzysztof.bogdan@pwr.edu.pl}
\thanks{The first-named author was  supported the Opus grant 2023/51/B/ST1/02209 of the National Science Center, Poland.}

\author[K. Merz]{Konstantin Merz}
\address[Konstantin Merz]{Institut f\"ur Analysis und Algebra, Technische Universit\"at Braunschweig, Universit\"atsplatz 2, 38106 Braun\-schweig, Germany, and Institut f\"ur Partielle Differentialgleichungen, Technische Universit\"at Braunschweig, Uni\-ver\-si\-t\"ats\-platz 2, 38106 Braun\-schweig, Germany}
\email{k.merz@tu-bs.de}

\subjclass[2020]{Primary 47D08, 
60J35}
\keywords{Hardy inequality, heat kernel, fractional Laplacian, angular momentum channel}

\date{\version}

\begin{abstract}
  Motivated by the study of relativistic atoms, we prove sharp heat kernel bounds for the Hardy operator $(-\Delta)^{\alpha/2}-\kappa|x|^{-\alpha}$ acting on functions of the form $u(|x|) |x|^{\ell} Y_{\ell,m}(x/|x|)$ in $L^2(\R^d)$, when $\alpha\in(0,2]\cap(0,d+2\ell)$.
\end{abstract}


\maketitle
\vspace*{-2em}
\tableofcontents

\section{Introduction and main result}
\label{s:introduction}

We consider the Laplacian $\Delta$ and the fractional Laplacian $(-\Delta)^{\alpha/2}$ with the added Hardy potential $-\kappa/|x|^\alpha$ in $L^2(\R^d)$. Here, $d \in \N := {1,2,\ldots}$, $\alpha \in (0, 2]$, with $\alpha < d$, and $\kappa \in \R$ is called the coupling constant. Note that $|x|^{-\alpha}$ is locally integrable on $\Rd$.
We call the Hardy potential attractive (mass-creating) if $\kappa>0$ and repulsive (mass-killing) if $\kappa<0$. Note that the operators $(-\Delta)^{\alpha/2}$ and the multiplication by $-\kappa|x|^{-\alpha}$ are homogeneous of the same degree (with respect to dilations of functions). 
We will study their sum 
in a connection to the following inequality,
\begin{align}
  \label{eq:hardy}
  \begin{split}
    & \left\| (-\Delta)^{\frac\alpha4} f \right\|_{L^2(\R^d)}^2
    \geq \kappa_{\rm c}^{(\alpha)}(d) \left\| |x|^{-\frac\alpha2} f \right\|_{L^2(\R^d)}^2, \quad f\in C_c^\infty(\R^d).
  \end{split}
\end{align}
known as the Hardy--Kato--Herbst inequality.
Here, again, $\alpha\in(0,2]\cap(0,d)$ and the inequality is sharp if we take
\begin{align}
  \kappa_{\rm c}^{(\alpha)}(d) := 2^\alpha\, \frac{\Gamma\left(\frac{d+\alpha}{4}\right)^2}{\Gamma\left(\frac{d-\alpha}{4}\right)^2},
\end{align}
see \cite{Hardy1919,Hardy1920,Kato1966,Herbst1977} and
\cite{Kovalenkoetal1981,Yafaev1999,Franketal2008H,FrankSeiringer2008}.
We simply call~\eqref{eq:hardy} the Hardy inequality. 
Thus, the quadratic form associated with $(-\Delta)^{\alpha/2} - \kappa |x|^{-\alpha}$ on $C_c^\infty(\R^d)$ is bounded from below if and only if $\kappa \leq \kappa_{\rm c}^{(\alpha)}(d)$, in which case, the form is non-negative. Then, by a theorem of Friedrichs, there is a corresponding non-negative, self-adjoint operator on $L^2(\Rd)$, which we call the Hardy operator and denote by $(-\Delta)^{\alpha/2} - \kappa |x|^{-\alpha}$.

Hardy and other homogeneous operators frequently appear as effective operators or as scaling limits of more complicated operators. In our recent works \cite{BogdanMerz2024, BogdanMerz2025, Bogdanetal2024} and in the present paper, we study Hardy operators by exploiting their spherical symmetry, leading to a direct sum decomposition into summands acting on radial multiples of spherical harmonics with fixed degree. Our main result, Theorem~\ref{mainresult} below, provides sharp (matching) upper and lower bounds for the heat kernels of these direct summands: In doing so, we also relax the condition $\kappa\leq\kappa_{\rm c}^{(\alpha)}(d)$.

To state the result precisely, consider the $(d-1)$-dimensional unit sphere $\bs^{d-1}\subset \Rd$ and the orthonormal basis of (hyper)spherical harmonics $\{Y_{\ell,m}\}_{\ell\in L_d,m\in M_{d,\ell}}$\index{$M_{d,\ell}$} in $L^2(\bs^{d-1})$. Here $L_1=\{0,1\}$, $L_d=\N_0$ for $d\geq2$, and $M_{d,\ell}\subseteq\Z^{d-2}$ is a finite index set\footnote{An explicit description of $M_{d,\ell}$ is not important here, but can be found in \cite{BogdanMerz2024}.} for each $\ell\in L_d$. In the following, we write $M_\ell$\index{$M_\ell$} instead of $M_{d,\ell}$.
For $\ell\in L_d$, $m\in M_\ell$, and an almost everywhere defined Borel function $u:\R_+\to \C$, we let
\begin{align}\index{$[u]_{\ell,m}$}
  \label{eq:defeqclassellm}
  [u]_{\ell,m}(x) := u(|x|) |x|^{\ell} Y_{\ell,m}(\omega_x),
  \quad \text{ for a.e. } \ x\in\R^d\setminus\{0\}, 
\end{align}
\noindent
where $\omega_x=x/|x|$ for $x\in\R^d\setminus\{0\}$ and $\R_+:=(0,\infty)$.\index{$\omega_x$}
By using polar coordinates, 
\begin{align}
  \label{eq:isometry}
  \|[u]_{\ell,m}\|_{L^2(\R^d)} = \|u\|_{L^2(\R_+,r^{d+2\ell-1}dr)}.
\end{align}
Let
\begin{equation}
  \label{eq:defvelll2}
  V_{\ell,m} := \{ [u]_{\ell,m}:\, u\in L^2(\R_+,r^{d+2\ell-1}dr)\},
  \quad \ell\in L_d, \; m\in M_\ell,
\end{equation}\index{$V_{\ell,m}$}and 
\begin{equation}
  \label{eq:defvelll2t}
  V_{\ell} := \spann\{ [u]_{\ell,m}:\, u\in L^2(\R_+,r^{d+2\ell-1}dr), \; m\in M_\ell\},
  \quad \ell\in L_d.
\end{equation}\index{$V_\ell$}
We call each $V_\ell$ \emph{angular momentum channel}, and say that functions in $V_{\ell}$ have \emph{angular momentum $\ell$}. For $\ell\in L_d$, we introduce the \emph{effective dimension} $d_\ell:=d+2\ell\in\N$.\index{$d_\ell$} As the spherical harmonics form an orthonormal basis of $L^2(\bs^{d-1})$, we get
\begin{align}
  \label{eq:directsumvell}
  V_\ell= \bigoplus_{m\in M_\ell}  V_{\ell,m}
  \qquad \text{and} \qquad
  L^2(\R^d) = \bigoplus_{\ell\in L_d}V_{\ell},
\end{align}
see, e.g., \cite{BogdanMerz2024} or Kalf \cite{Kalf1995} for an extensive review about the expansion of functions in terms of spherical harmonics.
Thus, in view of the angular momentum decomposition in Lemma~\ref{fourierbesselcor} below, we have the direct sum decomposition
\begin{align}
  \label{eq:angularmomentumdecomposition}
  (-\Delta)^{\alpha/2} - \kappa/|x|^\alpha
  = \bigoplus_{\ell\in L_d} (U_\ell^*\cl_{\kappa,\ell} U_\ell)\otimes\one_{L^2(\bs^{d-1})}.
\end{align}
Here $\one_{L^2(\bs^{d-1})}$ denotes the identity operator on $L^2(\bs^{d-1})$ and $U_\ell:L^2(\R_+,r^{d-1}dr)\to L^2(\R_+,r^{d_\ell-1}dr)$ is the unitary operator defined as $(U_\ell f)(r)=r^{-\ell}f(r)$. The object of main interest in this paper is $\cl_{\kappa,\ell}$, which is that operator in $L^2(\R_+,r^{d_\ell-1}dr)$ arising in
\begin{align}
  \label{eq:lkappaell}
  \langle[u]_{\ell,m},((-\Delta)^{\alpha/2}-\kappa/|x|^\alpha)[u]_{\ell,m}\rangle_{L^2(\R^d)} = \langle u,\cl_{\kappa,\ell}u\rangle_{L^2(\R_+,r^{d_\ell-1}dr)}, 
\end{align}
where $u\in C_c^\infty(\R_+)$, see also Lemma~\ref{fourierbessel}. Formally, we have
\begin{align}
  \label{eq:deflkappaell}
  \cl_{\kappa,\ell} = \left(-\frac{d^2}{dr^2}-\frac{d_\ell-1}{r}\frac{d}{dr}\right)^{\alpha/2} - \frac{\kappa}{r^\alpha} \quad \text{in} \ L^2(\R_+,r^{d_\ell-1}dr).
\end{align}
By Hardy's inequality \eqref{eq:hardy}, the operators $\cl_{\kappa,\ell}$ are non-negative self-adjoint operators in $L^2(\R_+,r^{d_\ell-1}dr)$ whenever $\kappa\leq\kappa_{\rm c}^{(\alpha)}(d)$, but for $\ell\geq1$, $\cl_{\kappa,\ell}$ can be realized as a non-negative self-adjoint operator even for some $\kappa>\kappa_{\rm c}^{(\alpha)}(d)$. In fact, for $\alpha\in(0,2]\cap(0,d_\ell)$, we let
\begin{align}
  \label{eq:defsigmagammal}
  \Phi_{d_\ell}^{(\alpha)}(\eta) 
  := \frac{2^{\alpha} \Gamma\left(\frac{\eta+\alpha}{2}\right) \Gamma \left(\frac{d_\ell-\eta}{2}\right)}{\Gamma \left(\frac{\eta}{2}\right) \Gamma \left(\frac{d_\ell-\eta-\alpha}{2}\right)},
  \quad \eta \in (-S,d_\ell-\alpha+S),
\end{align}
to parameterize $\kappa$. Here, $S:=\alpha$ if $\alpha<2$, but $S:=\infty$ if $\alpha=2$. Note that
\begin{align}
  \Phi_{d_\ell}^{(2)}(\eta) = \eta(d_\ell-\eta-2), \quad \eta\in \R.
\end{align}
The maximal value of $\Phi_{d_\ell}^{(\alpha)}(\eta)$ is 
\begin{align}\index{$\kappa_{\rm c}^{(\alpha)}(d_\ell)$}
  \label{eq:defgammacell}
  \kappa_{\rm c}^{(\alpha)}(d_\ell) = \Phi_{d_\ell}^{(\alpha)}\left(\frac{d_\ell-\alpha}{2}\right)
  = \frac{2^{\alpha} \Gamma \left(\frac{1}{4} (d_\ell+\alpha)\right)^2}{\Gamma \left(\frac{1}{4} (d_\ell-\alpha)\right)^2},
\end{align}
which simplifies for $\alpha=2$ to $\kappa_{\rm c}^{(2)}(d_\ell)=(d_\ell-2)^2/4$. Thus, for each $\kappa\leq\kappa_{\rm c}^{(\alpha)}(d_\ell)$, there is a unique $\eta \leq (d_\ell-\alpha)/2$ such that $\kappa=\Phi_{d_\ell}^{(\alpha)}(\eta)$. For instance, $0=\Phi_{d_\ell}^{(\alpha)}(0)$. 

By the sharp Hardy inequalities---which are well known for $\alpha=2$ and are first established in Le Yaouanc, Oliver, and Raynal \cite{LeYaouancetal1997} for $d=3$, $\ell\in\N_0$, and $\alpha=1$, and in Yafaev \cite{Yafaev1999} for all $d\in\N$, $\ell\in L_d$, and $\alpha\in(0,2\wedge d_\ell)$---we have
\begin{align}
  \label{eq:hardylinformal}
  \cl_{\kappa,\ell} \geq0 \quad \text{if and only if}\ \kappa\leq\kappa_{\rm c}^{(\alpha)}(d_\ell)
\end{align}
in the sense of quadratic forms on $C_c^\infty(\R_+)$. In fact, \eqref{eq:hardylinformal} may be viewed as a consequence of the ground state representation \cite{BogdanMerz2024}, which we recall in \eqref{eq:hardyremainderagaintransformedlimit} and \eqref{eq:hardyopformradial} below. 
In particular, for $\alpha<2$, the ground state representation says that for all $u\in L^2(\R_+,r^{d_\ell-1}dr)$ the quadratic form $\|(-\Delta)^{\alpha/4}[u]_{\ell,m}\|_{L^2(\R^d)}^2-\Phi_{d_\ell}^{(\alpha)}(\eta)\||x|^{-\alpha/2}[u]_{\ell,m}\|_{L^2(\R^d)}^2$ associated with $\cl_{\kappa,\ell}$ equals an explicit nonnegative integral form, denoted by $\ci_{(d_\ell-1)/2,\eta}$ and defined in \eqref{eq:defizetaeta} below; see \eqref{eq:fractlaplacesingularintegral}, \eqref{eq:hardyforml}, and \eqref{eq:hardyopformradial}. In Theorem~\ref{relationhardyformheatkernel}, we show that $\ci_{(d_\ell-1)/2,\eta}$ on its maximal domain is nonnegative, closed, and symmetric. Thus, by a theorem of Friedrichs (see, e.g., \cite[Theorem~VI.2.1]{Kato1966} or \cite[Theorem~1.3.1]{Fukushimaetal2011}), there is a corresponding self-adjoint operator, which we denote by $\cl_{\kappa,\ell}$ (in abuse of notation).
Furthermore, in \cite{BogdanMerz2024}, we show that
\begin{align}
  \label{eq:defh}
  h(r) := r^{-\eta}  
\end{align}
is a so-called \textit{generalized ground state} of $\cl_{\Phi_{d_\ell}^{(\alpha)}(\eta),\ell}$. This means that $\cl_{\Phi_{d_\ell}^{(\alpha)}(\eta),\ell}h(r)=0$ pointwise a.e., but $h(r)$ does not belong to $L^2(\R_+,r^{d_\ell-1}dr)$. 
When $\alpha=2$, we write
\begin{align}
  \cl_{\Phi_{d_\ell}^{(2)}(\eta),\ell} = \fl_{(d_\ell-1)/2} - \frac{\Phi_{d_\ell}^{(2)}(\eta)}{r^2}
\end{align}
with the Bessel operator in Liouville form
\begin{align}
  \label{eq:deflzeta}
  \fl_\zeta := -\frac{d^2}{dr^2} - \frac{2\zeta}{r}\frac{d}{dr}
  \quad \text{in} \ L^2(\R_+,r^{2\zeta}dr).
\end{align}\index{$\fl_\zeta$}More precisely, $\fl_\zeta$ is defined as the Friedrichs extension of the corresponding quadratic form on $C_c^\infty(\R_+)$ when $\zeta\in[1/2,\infty)$, while $\fl_\zeta$ is defined as the Krein extension of the corresponding quadratic form on $C_c^\infty(\R_+)$ when $\zeta\in(-1/2,1/2]$. In both cases, $\fl_\zeta$ is non-negative and self-adjoint. We refer, e.g., to \cite[Theorem~4.22]{Bruneauetal2011}, \cite[Theorem~8.4]{DerezinskiGeorgescu2021}, or \cite[Section~4]{Metafuneetal2018} for further details and references regarding the spectral theory of $\fl_\zeta$, in particular for the operator and quadratic form domains and (form) cores.
By a change of variables, the quadratic forms associated to $\cl_{\kappa,\ell}$ in $L^2(\R_+,r^{d_\ell-1})$ and $\fl_{(d_\ell-1)/2-\eta}$ in $L^2(\R_+,r^{d_\ell-1-2\eta}dr)$ coincide on $C_c^\infty(\R_+)$, see, e.g., \cite{Metafuneetal2018} or \cite{BogdanMerz2024}. Hence, for $\alpha=2$, we define $\cl_{\Phi_{d_\ell}^{(2)}(\eta),\ell}$ as that self-adjoint operator in $L^2(\R_+,r^{d_\ell-1}dr)$ which is unitarily equivalent to $\fl_{(d_\ell-1)/2-\eta}$ in $L^2(\R_+,r^{d_\ell-1-2\eta}dr)$. Note that $(d_\ell-1)/2-\eta\geq1/2$ for $\eta\leq(d_\ell-2)/2$.


\smallskip
We now state our main result, the sharp bounds for heat kernel $\exp\left(-t\cl_{\Phi_{d_\ell}^{(\alpha)}(\eta),\ell}\right)$ of $\cl_{\Phi_{d_\ell}^{(\alpha)}(\eta),\ell}$, which is defined by functional calculus. These bounds give an approximate factorization of the heat kernel into a product of the heat kernel of $\cl_{0,\ell}$ times singular weights related to the generalized ground state $r^{-\eta}$.
\begin{theorem}
  \label{mainresult}
  Let $d\in\N$, $\ell\in L_d$, $\alpha\in(0,2]\cap(0,d_\ell)$, and $\eta\in(-S,(d_\ell-\alpha)/2]$.
  Then, for $\alpha\in(0,2)$, uniformly in $r,s,t>0$,
  \begin{align}
    \label{eq:mainresultalpha}
    \begin{split}
      & \exp\left(-t\cl_{\Phi_{d_\ell}^{(\alpha)}(\eta),\ell}\right)(r,s)
        \sim_{d,\ell,\alpha,\eta} \left(1\wedge\frac{r}{t^{1/\alpha}}\right)^{-\eta} \left(1\wedge\frac{s}{t^{1/\alpha}}\right)^{-\eta} \cdot \exp\left(-t\cl_{0,\ell}\right)(r,s), \\
    \end{split}
  \end{align}
  while for $\alpha=2$, 
  \begin{align}
    \label{eq:mainresult2}
    \begin{split}
      & \exp\left(-t\cl_{\Phi_{d_\ell}^{(2)}(\eta),\ell}\right)(r,s)
        \asymp_{d,\ell,\eta} \left(1\wedge\frac{r}{t^{1/2}}\right)^{-\eta} \left(1\wedge\frac{s}{t^{1/2}}\right)^{-\eta} \cdot \exp\left(-t\cl_{0,\ell}\right)(r,s). \\
     \end{split}
  \end{align}
  Furthermore, for $\alpha<2$, we have
  \begin{align}
    \label{eq:mainresultfreealpha}
    \begin{split}
       \exp\left(-t\cl_{0,\ell}\right)(r,s)
       \sim_{d,\ell,\alpha} \frac{t}{|r-s|^{1+\alpha}(r+s)^{d_\ell-1}+t^{\frac{1+\alpha}{\alpha}}(t^{\frac1\alpha}+r+s)^{d_\ell-1}} 
    \end{split}
  \end{align}
  and, for $\alpha=2$,
  \begin{align}
    \label{eq:mainresultfree2}
    \begin{split}
       \exp\left(-t\cl_{0,\ell}\right)(r,s)
       \asymp_{d,\ell} t^{-\frac12}\frac{\exp\left(-\frac{(r-s)^2}{ct}\right)}{(rs+t)^{(d_\ell-1)/2}}.
    \end{split}
  \end{align}
  Moreover, $\exp\left(-t\cl_{\Phi_{d_\ell}^{(\alpha)}(\eta),\ell}\right)(r,s)$ is jointly continuous as a function of $r,s,t>0$.
\end{theorem}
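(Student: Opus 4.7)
The plan is to decompose the proof along two axes: by $\alpha$, treating the local case $\alpha=2$ via explicit Bessel-function formulas and the fractional case $\alpha\in(0,2)$ through the Dirichlet form arising from the ground-state representation; and by $\kappa=\Phi_{d_\ell}^{(\alpha)}(\eta)$, first establishing the free bounds \eqref{eq:mainresultfreealpha}--\eqref{eq:mainresultfree2}, and then using the generalized ground state $h(r)=r^{-\eta}$ from \eqref{eq:defh} to upgrade from $\eta=0$ to arbitrary admissible $\eta$.

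For the free bounds I would use the following. When $\alpha=2$, $\cl_{0,\ell}$ is unitarily equivalent to $\fl_{(d_\ell-1)/2}$ (essentially the radial Laplacian in dimension $d_\ell$), whose heat kernel has an explicit expression in terms of the modified Bessel function $I_{(d_\ell-2)/2}$. Feeding in the small- and large-argument asymptotics of $I_\nu$ produces the switching behavior between the regimes $rs\ll t$ (where $I_\nu(z)\sim z^\nu$) and $rs\gg t$ (where $I_\nu(z)\sim e^z/\sqrt{z}$) that is encoded in \eqref{eq:mainresultfree2}. For $\alpha<2$, I would obtain \eqref{eq:mainresultfreealpha} by projecting the well-known two-sided bounds for the isotropic $\alpha$-stable heat kernel on $\R^d$ onto $V_\ell$ via \eqref{eq:angularmomentumdecomposition}, or equivalently by Bochner subordination from the $\alpha=2$ kernel above; in either case, the main work is case analysis on whether $|r-s|$ and $r+s$ are small or large relative to $t^{1/\alpha}$, to match the denominator in \eqref{eq:mainresultfreealpha}.

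The passage from $\eta=0$ to general admissible $\eta$ in the case $\alpha=2$ is again essentially explicit: $\cl_{\Phi_{d_\ell}^{(2)}(\eta),\ell}$ is unitarily equivalent to $\fl_{(d_\ell-1)/2-\eta}$, whose heat kernel is the same Bessel expression with index shifted by $-\eta$, and the pointwise ratio to the $\eta=0$ case is governed by the leading Bessel asymptotics, yielding exactly the factor $(1\wedge r/t^{1/2})^{-\eta}(1\wedge s/t^{1/2})^{-\eta}$ after tracking the transition at $rs\sim t$.

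The fractional case $\alpha<2$ is where I expect the main obstacle. My approach is Doob's $h$-transform with $h(r)=r^{-\eta}$: since $\cl_{\Phi_{d_\ell}^{(\alpha)}(\eta),\ell} h=0$, this conjugation produces a symmetric Markovian semigroup on $L^2(\R_+, r^{d_\ell-1-2\eta}dr)$ whose transition density equals $r^\eta s^\eta\exp(-t\cl_{\Phi_{d_\ell}^{(\alpha)}(\eta),\ell})(r,s)$. The corresponding Dirichlet form is read off directly from the ground-state representation (Theorem~\ref{relationhardyformheatkernel}) as the nonnegative integral form $\ci_{(d_\ell-1)/2,\eta}$, whose jumping kernel is comparable to the free one far from the origin and carries a tractable singularity near the origin weighted by $\eta$. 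From here I would apply the standard Dirichlet-form toolkit for symmetric non-local operators: off-diagonal upper bounds via a Davies-type method with cutoff Sobolev inequalities, matched by chaining lower bounds for symmetric jump processes, both calibrated against the free estimates of the previous step. Undoing the $h$-transform reinstates the factor $(rs)^{-\eta}$; combined with the fact that the Hardy perturbation is felt only on scales $r,s\lesssim t^{1/\alpha}$, this yields the full weight $(1\wedge r/t^{1/\alpha})^{-\eta}(1\wedge s/t^{1/\alpha})^{-\eta}$. Joint continuity of $\exp(-t\cl_{\Phi_{d_\ell}^{(\alpha)}(\eta),\ell})(r,s)$ then follows from the two-sided bounds together with the strong Feller property of the $h$-transformed semigroup.
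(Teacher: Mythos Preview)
Your treatment of $\alpha=2$ (via the unitary equivalence to $\fl_{(d_\ell-1)/2-\eta}$ and Bessel asymptotics) and of the free bounds $\eta=0$ (via subordination) match the paper, which cites Metafune--Negro--Spina \cite{Metafuneetal2018} and \cite{BogdanMerz2025} for precisely these steps. For $\alpha<2$ and $\eta\neq0$ the paper takes a different route: it imports from the companion work \cite{Bogdanetal2024} a kernel $p_{\zeta,\eta}^{(\alpha)}$ constructed as the Schr\"odinger perturbation of the subordinated Bessel kernel $p_\zeta^{(\alpha)}$ by the potential $\Psi_\zeta(\eta)r^{-\alpha}$ (Duhamel/Feynman--Kac series, Definition~\ref{deffeynmankactransformed}), together with its sharp two-sided bounds (Theorem~\ref{mainresultgen}), which are proved in \cite{Bogdanetal2024} by 3G-inequality integral analysis rather than Dirichlet-form methods. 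The contribution of the present paper to this case is then entirely Theorem~\ref{relationhardyformheatkernel}: showing that the quadratic form $\ce_{\zeta,\eta}$ generated by $p_{\zeta,\eta}^{(\alpha)}$ coincides with the ground-state form $\ci_{\zeta,\eta}$, and hence that $p_{\zeta,\eta}^{(\alpha)}$ is the heat kernel of $\cl_{\Phi_{d_\ell}^{(\alpha)}(\eta),\ell}$.

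Your proposed alternative---Davies method plus chaining on the $h$-transformed form---has a concrete gap. The $h$-transformed jumping kernel is $\nu_\zeta(r,s)h(r)h(s)\sim(rs)^{-\eta}(r+s)^{-2\zeta}|r-s|^{-1-\alpha}$ with reference measure $r^{2\zeta-2\eta}\,dr$; when $r$ and $s$ are on different scales this is \emph{not} comparable to $\nu_{\zeta-\eta}(r,s)$ nor to any kernel of Chen--Kumagai type $J(r,s)\asymp V(r,|r-s|)^{-1}|r-s|^{-\alpha}$ (the ratio picks up a factor $(r/s)^{-\eta}$). Consequently neither the cutoff Sobolev inequality used in the Davies upper bound nor the near-diagonal lower bound needed to seed the chaining argument is available off the shelf. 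This is not cosmetic: the same obstruction is why the full-space Hardy heat kernel in \cite{Bogdanetal2019,JakubowskiWang2020,Choetal2020} was also treated by perturbation-series or factorization arguments rather than generic Dirichlet-form machinery. Your closing assertion that ``the Hardy perturbation is felt only on scales $r,s\lesssim t^{1/\alpha}$'' is the \emph{conclusion} you want, not an input; it does not follow from the $h$-transform alone and is exactly what the integral analysis of \cite{Bogdanetal2024} is designed to supply.
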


The main point of Theorem~\ref{mainresult} is the connection between the semigroup constructed in \cite{Bogdanetal2024} (which we recall in Subsection~\ref{ss:defperturbedsemigroup} below) and the heat kernel $\me{-t\cl_{\kappa,\ell}}$ of $\cl_{\kappa,\ell}$ and the corresponding generating quadratic forms (which we recall in Subsection~\ref{s:defquadformsemigroup} below).

Let us explain the notation used in Theorem~\ref{mainresult}.
We write $A\lesssim B$ for functions $A,B\geq0$ to indicate that there is a \textit{constant} $c\in (0,\infty)$ such that $A\leq c B$. If $c$ depends on $\tau$, we may write $c_\tau$ instead of $c$ or $A\lesssim_\tau\! B$, but the dependence on $d$, $\ell$, and $\alpha$ may also be ignored in the notation. Note that the values of constants may change from place to place. The notation $A\sim_\tau B$ means $A\lesssim_\tau\! B \lesssim_\tau\! A$ and we then say that $A$ is \emph{equivalent} or comparable to $B$.\index{$\lesssim$}\index{$\sim$}
The notation $\asymp$\index{$\asymp$} means the same as $\sim$, but the constants in the \textit{exponential factors} in an estimate are allowed to be different in the upper and the lower bounds; see \eqref{eq:mainresult2} and \eqref{eq:mainresultfree2}. As usual, $A\wedge B:=\min\{A,B\}$ and $A\vee B:=\max\{A,B\}$. \index{$\wedge$}\index{$\vee$} 

\begin{remark}
  For future applications, it would be important to describe the dependence of constants in our estimates on the angular momentum $\ell$, for instance for the angular momentum synthesis \eqref{eq:angularmomentumdecomposition} and Lemma~\ref{fourierbesselcor}. However, tracking the dependence of the estimates in Theorem~\ref{mainresult} on $\ell$ is rather involved and is not attempted here.
\end{remark}

\subsection*{Implications of Theorem~\ref{mainresult}}

Heat kernel estimates for Schr\"odinger operators provide a powerful tool kit to approach a wide range of problems, from pure mathematics, such as partial differential equations or spectral theory, to applied fields such as quantum physics or statistical mechanics.
Our works \cite{BogdanMerz2024,BogdanMerz2025,Bogdanetal2024}, and the present one are mainly motivated by the ambition to establish functional inequalities relating Hardy operators to fractional Laplacians, and to investigate the electron distribution in large, relativistically described atoms. Indeed, one of the mathematically simplest models describing the energy and time evolution of $Z$ relativistic electrons interacting with each other and with a static nucleus of electric charge $Z$ is given by the many-particle Chandrasekhar (or pseudorelativistic) operator
\begin{align}
  \label{eq:manyparticlechandrasekhar}
  \begin{split}
    H_{Z,c}
    & := \sum_{\nu=1}^Z \underbrace{\one_{L^2(\R^3)}\otimes\cdots\otimes\one_{L^2(\R^3)}}_{\nu-1\text{ times}} \otimes C_{Z,c} \otimes \underbrace{\one_{L^2(\R^3)}\otimes\cdots\otimes \one_{L^2(\R^3)}}_{Z-\nu\text{ times}} + \sum_{1\leq\nu<\mu\leq Z}\frac{1}{|x_\nu-x_\mu|}
  \end{split}
\end{align}
defined in the totally antisymmetric tensor product of $Z$ copies of $L^2(\R^3)$. Here,
\begin{align}
  \label{eq:chandrasekhar}
  C_{Z,c} := \sqrt{c^4-c^2\Delta}-c^2-\frac{Z}{|x_\nu|} \quad \text{in} \ L^2(\R^3)
\end{align}
describes a single electron interacting with the nucleus and $c>0$ denotes the speed of light. The operator $C_{Z,c}$ has a natural length scale given by $c^{-1}$ and is unitarily equivalent to $c^2C_\kappa$ with $C_\kappa:=C_{\kappa,1}$ and $\kappa=Z/c$. Since $C_\kappa$ is a bounded perturbation of $\sqrt{-\Delta}-\kappa/|x|$, by the Hardy inequality \eqref{eq:hardy}, $C_{Z,c}$ and $H_{Z,c}$ are bounded from below if and only if $\kappa\leq\kappa_{\rm c}^{(1)}(3)=2/\pi$. Moreover, according to \cite{Lewisetal1997}, the lowest spectral point of $H_{Z,c}$ is an eigenvalue, called the ground state energy. This eigenvalue may be degenerate, but to keep the discussion simple, let us suppose it is simple and let $\psi$ denote a corresponding eigenfunction, called the ground state. In \cite{Franketal2020P}\footnote{See also \cite{Franketal2023} for a shorter proof and \cite{Franketal2023T} for a review.}, we study the probability density of finding one of the electrons at $x$ given by
\begin{align}
  \rho_Z(x) := Z\int_{\R^{3(Z-1)}}|\psi(x,x_2,...,x_Z)|^2\,dx_2\ldots\,dx_Z
\end{align}
and called the ground state density.
We show that $c^{-3}\rho_Z(x/c)$ converges in the asymptotic limit $Z,c\to\infty$, with fixed $\kappa<2/\pi$, to $\rho^H(x):=\sum_{n\geq1}|\Phi_n(x)|^2$, where $\Phi_n$ are the infinitely many eigenfunctions of $C_\kappa$. Thus, the electron distribution of a large atom close to the nucleus is well described by the electron distribution of an infinite atom without electron-electron interactions. Moreover, we prove that $\rho^H(x)\lesssim|x|^{-2\eta}$, where $\eta$ is defined by $\kappa=\Phi_{d_0}(\eta)$. We note in passing that the corresponding results for the physically more relevant Coulomb--Dirac operator are proved in \cite{MerzSiedentop2022}.
A paramount role in proving both the convergence result and the upper bound for $\rho^H(x)$ is played by the recent heat kernel bounds for $\sqrt{-\Delta}-\kappa/|x|$ with $\kappa>0$, as established in the pioneering paper \cite{Bogdanetal2019} by Bogdan, Grzywny, Jakubowski, and Pilarczyk; see also Jakubowski and Wang \cite{JakubowskiWang2020} and Cho, Kim, Song, and Vondra\v{c}ek \cite{Choetal2020} for the case $\kappa<0$. In particular, the proof of the convergence relies on the validity of certain relative Schatten bounds with respect to powers of $C_\kappa$ greater than one. Establishing such bounds is non-trivial for \(\kappa \neq 0\) because the Hardy inequality can no longer be used to replace \(C_\kappa\) with the more manageable Fourier multiplier \(C_0\); this is due to the fact that powers greater than one are not operator monotone.

However, we overcome this obstruction in \cite{Franketal2021}. There, we use the heat kernel bounds for \( (-\Delta)^{\alpha/2} - \frac{\kappa}{|x|^\alpha} \) and prove that the \( L^2(\mathbb{R}^d) \)-Sobolev spaces generated by powers of \( (-\Delta)^{\alpha/2} - \frac{\kappa}{|x|^\alpha} \) coincide with the ordinary homogeneous Sobolev spaces when \(\kappa\) is not too large in a quantitative sense. For the case \(\alpha = 2\), see the influential paper \cite{Killipetal2018} by Killip, Miao, Visan, Zhang, and Zheng; for corresponding results involving \( L^p \)-Sobolev spaces with \( p \neq 2 \), we refer to \cite{Merz2021,BuiDAncona2023}; and for similar investigations involving the fractional Laplacian on the half-space \( \mathbb{R}_+^d \) with Hardy potential \( \frac{\kappa}{x_d^\alpha} \), depending on the distance to the bounding half-plane, see \cite{FrankMerz2023,BuiMerz2023}.

The present work allows us to refine the above results on the electron distribution of large atoms. Let us sketch the argument. Due to the spherical symmetry, the eigenfunctions of $C_\kappa$ are of the form $|x|^{-\ell} \phi_{n,\ell}(|x|)Y_{\ell,m}(x/|x|)$ with $\phi_{n,\ell} \in L^2(\R_+,r^{d_\ell-1}dr)$, $d=3$. In \cite{Franketal2020P}, we also prove the more precise statement that the ground state density of electrons with prescribed angular momentum $\ell\in\N_0$, given by
\begin{align}
  \varrho_{Z,\ell}(r) := \frac{Z}{(2\ell+1) r^{2\ell}} \sum_{m=-\ell}^\ell \int_{\R^{3(Z-1)}}\left|\int_{\bs^2}d\omega\, \overline{Y_{\ell,m}(\omega)} \psi(r\omega,x_2,...,x_Z)\right|^2dx_2\ldots dx_Z,
\end{align}
converges to $\varrho_\ell^H(r):=\sum_{n\geq1}|\phi_{n,\ell}(r)|^2$ as $Z,c\to\infty$ with $\kappa<2/\pi$.
Thus, as an application of Theorem~\ref{mainresult}, in \cite{FrankMerz2025} (see also \cite{Merz2024H}), we prove upper bounds for $\varrho_\ell^H(r)$ and show that $\varrho_\ell^H(r)\lesssim r^{-2\eta}$ for $r\lesssim1$; in fact, we do so for $\varrho_\ell^H$ defined in terms of the eigenfunctions of $(-\Delta+1)^{\alpha/2}-1-\kappa/|x|^\alpha$ in $L^2(\R^d)$. In addition, in \cite{FrankMerz2025}, we will also use the results of Theorem~\ref{mainresult} to prove upper bounds for $\varrho_\ell^H(r)$ and $\sum_{\ell\in L_d}|M_\ell| \cdot \varrho_\ell^H(r)$ for $r\gtrsim1$.
Moreover, Theorem~\ref{mainresult} can be used to refine the results on the equivalence of Sobolev norms generated by $(-\Delta)^{\alpha/2}-\kappa/|x|^\alpha$ for different values of $\kappa$, and to prove the convergence of the ground state density $\rho_Z$ in fixed angular momentum channels as $Z,c,\to\infty$ with fixed $\kappa=Z/c$ for a wider range of admissible coupling constants $\kappa$.

Besides these concrete future applications, we also believe that the present work serves as a solid starting point for a more detailed analysis of the considered heat kernels regarding, e.g., their asymptotics and smoothness.

\subsection*{Organization and notation}

The rest of the paper is dedicated to proving Theorem~\ref{mainresult} and is structured as follows.
In Section~\ref{s:framework}, we introduce our framework.
In Section~\ref{s:pointwise}, we review subordinated Bessel heat kernels and their Hardy perturbations. In particular, in Theorem~\ref{mainresultgen}, we recall the instrumental bounds for these Hardy perturbations proved in our work \cite{Bogdanetal2024} with Tomasz Jakubowski.
In Section~\ref{s:quadraticforms}, we connect the quadratic forms associated with $\cl_{\kappa,\ell}$ with those generated by the Hardy perturbations of the subordinated Bessel heat kernels. This is precisely stated in our main technical result, Theorem~\ref{relationhardyformheatkernel}, which allows us to conclude the proof of Theorem~\ref{mainresult} in the final Section~\ref{s:proof}.

\medskip
For $z\in\C\setminus(-\infty,0]$ we denote the ordinary and modified Bessel functions of the first kind of order $\nu\in\C$ by $J_\nu(z)$ and $I_\nu(z)$, respectively \cite[(10.2.2), (10.25.2)]{NIST:DLMF}.\index{$J_\nu$}\index{$I_\nu$}
We write $_2\tilde F_1(a,b;c;z) =\, _2F_1(a,b;c;z)/\Gamma(c)$, with $a,b,c\in\C$ and $z\in\{w\in\C:\,|w|<1\}$, for the regularized hypergeometric function \cite[(15.2.1)]{NIST:DLMF} and $_2F_1(a,b;c;z)$ for the (usual) hypergeometric function \cite[(15.2.2)]{NIST:DLMF} when $c\notin\{0,-1,-2,...\}$.\index{$_2F_1(a,b;c;z)$}\index{$_2\tilde F_1(a,b;c;z)$}
We introduce further notation as we proceed---the notation is summarized in the \hyperref[index]{\indexname} at the end of this paper.

\subsection*{Acknowledgments}
We are grateful to Volker Bach, Rupert Frank, and Haruya Mizutani for helpful discussions.
K.M.~thanks Haruya Mizutani for his hospitality at Osaka University, where part of this research was carried out.

\section{The framework}
\label{s:framework}
\subsection{Preliminaries}
\label{ss:preliminaries}

For $d\in\N$ and $\alpha\in(0,2)$, we consider the quadratic form
\begin{align}
  \label{eq:fractlaplacesingularintegral}
  \ce^{(\alpha)}[f]
  :=\frac12\int_{\R^d}\!\int_{\R^d} 
  |f(x)-f(y)|^2\nu(x,y)
  \,dy\,dx, \quad f\in L^2(\R^d),
\end{align}\index{$\ce^{(\alpha)}$}associated to the fractional Laplacian $(-\Delta)^{\alpha/2}$, where
\begin{equation*}
  \nu(x,y) := \nu(x-y),\quad x,y\in \R^d,
\end{equation*}\index{$\nu(x,y)$}with
\begin{align}
  \label{eq:defaalphad}
  \nu(z):=\mathcal{A}_{d,-\alpha}|z|^{-d-\alpha}
  \quad 
  \text{and}
  \quad
  \mathcal{A}_{d,-\alpha}
  := \frac{2^{\alpha}\Gamma\big((d+\alpha)/2\big)}{\pi^{d/2}|\Gamma(-\alpha/2)|}.
\end{align}\index{$\ca_{d,-\alpha}$}There is an equivalent representation of $\ce^{(\alpha)}$ using semigroups. With
\begin{align}
  \label{eq:defusualheatkernel}
  P^{(\alpha)}(t,x,y) := \frac{1}{(2\pi)^d}\int_{\R^d}d\xi\, \me{-i\xi\cdot(x-y)-t|\xi|^\alpha} , \quad x,y\in\R^d,\,t>0,
\end{align}\index{$P^{(\alpha)}(t,x,y)$}we have
\begin{align}
  \label{eq:fractlaplaceheatkernel}
  \ce^{(\alpha)}[f]
  = \lim_{t\to0}\frac1t\langle f,(1-P^{(\alpha)}(t,\cdot,\cdot))f\rangle_{L^2(\R^d)}, \quad f\in L^2(\R^d).
\end{align}
Here $\langle f,g\rangle_{L^2(\R^d)}=\int_{\R^d}\overline{f(x)}g(x)\,dx$ and, as usual\footnote{Such identification of kernels with operators will be made without further comment.},
\begin{align*}
  (P^{(\alpha)}(t,\cdot,\cdot)f)(x) := \int_{\R^d} P^{(\alpha)}(t,x,y)f(y)\,dy, \quad t>0,\,x\in\R^d.
\end{align*}
Moreover, using the Fourier transform
\begin{align*}
  \hat f(\xi):=(\F f)(\xi):=(2\pi)^{-d/2}\int_{\R^d}\me{-ix\cdot\xi}f(x)\,dx,
\end{align*}\index{$\hat f$}\index{$\F$}initially defined on $L^1(\R^d)$, and extended to a unitary operator on $L^2(\R^d)$, we get
\begin{align}
  \label{eq:dirichletformalpha}
  \begin{split}
    \ce^{(\alpha)}[f]
    & = \int_{\R^d} |\hat f(\xi)|^2|\xi|^\alpha\,d\xi.
  \end{split}
\end{align}
Recall the Sobolev spaces,
\begin{align}
  H^s(\R^d) := \left\{f\in L^2(\R^d):\,
  \int_{\R^d} |\xi|^{2s} |\hat f(\xi)|^2\,d\xi < \infty\right\}, \quad s\geq0,
\end{align}\index{$H^s(\R^d)$}
and the corresponding homogeneous Sobolev spaces $\dot H^s(\R^d)$\index{$\dot H^{s}(\R^d)$} for $s\geq0$, defined as the completion of $C_c^\infty(\R^d)$ under $\||\xi|^s \hat f\|_{L^2(\R^d)}$. Then we write, for $\alpha=2$,
\begin{align}
  \label{eq:deflaplacequadform}
  \ce^{(2)}[f]:=\langle\nabla f,\nabla f\rangle_{L^2(\R^d)}, \quad f\in \dot H^1(\R^d),
\end{align}\index{$\ce^{(2)}$}which is the quadratic form of $-\Delta$. Since each component of $D:=-i\nabla$ in $L^2(\R^d)$ is self-adjoint with domain $H^1(\R^d)$ and they commute, the spectral theorem ensures that $|D|^\alpha:=(-\Delta)^{\alpha/2}$ with domain $H^\alpha(\R^d)$ is also self-adjoint. Moreover, $|D|^\alpha$ coincides with the Friedrichs extension corresponding to the quadratic form $\ce^{(\alpha)}$, initially defined on $C_c^\infty(\R^d)$. In particular, the kernel in \eqref{eq:defusualheatkernel} is just the integral kernel of $\me{-t|D|^\alpha}$.

\subsection{Angular momentum decomposition}
\label{ss:angularmomenta}

Our goal is to analyze the heat kernel of $(-\Delta)^{\alpha/2}-\kappa/|x|^\alpha$, defined by functional calculus, associated with the quadratic form
\begin{align}
  \label{eq:defhardyop}
  \ce^{(\alpha)}[f] - \kappa \int_{\R^d}\frac{|f(x)|^2}{|x|^\alpha}\,dx,
\end{align}
by taking into account the spherical symmetry of the operators $|D|^\alpha$ and $|x|^{-\alpha}$. By spherical symmetry we mean that $|D|^\alpha$ and $|x|^{-\alpha}$ commute with the generator of rotations---sometimes called angular momentum operator---and therefore with its square, the Laplace--Beltrami operator on $L^2(\bs^{d-1})$. The latter is a non-negative operator with purely discrete spectrum and orthonormal eigenbasis consisting of spherical harmonics. The spherical symmetry allows to decompose $|D|^\alpha$ and $|x|^{-\alpha}$ into a direct sum of operators, each acting on radial functions multiplied by a spherical harmonic.
The following lemma provides an \emph{angular momentum decomposition} of $\ce^{(\alpha)}[f]$ and quadratic forms corresponding to spherically symmetric multiplication operators, such as $|x|^{-\alpha}$. It says that these forms break down into forms corresponding to different spaces $V_{\ell,m}$. To that end note that for every $f\in L^2(\R^d)$, there are unique coefficients $f_{\ell,m}\in V_{\ell,m}$, $\ell\in L_d$, $m\in M_\ell$, such that
\begin{align}
  \label{eq:l2expansionsphericalharmonics}
  f = \sum_{\ell\in L_d}\sum_{m\in M_\ell}f_{\ell,m}, \qquad
  \|f\|_{L^2(\R^d)}^2 = \sum_{\ell\in L_d}\sum_{m\in M_\ell}\|f_{\ell,m}\|_{L^2(\R^d)}^2,
\end{align}
see, e.g., \cite{BogdanMerz2024,Kalf1995}.

\begin{lemma}
  \label{fourierbesselcor}
  Let $d\!\in\!\N$, $\alpha\!\in\!(0,2]$, and $f\!\in\! H^{\frac\alpha2}(\R^d)$ with the expansion \eqref{eq:l2expansionsphericalharmonics}. Then,
  \begin{subequations}
    \begin{align}
      & \ce^{(\alpha)}[f] = \sum_{\ell\in L_d}\sum_{m\in M_\ell}\ce^{(\alpha)}[f_{\ell,m}]
        \quad \text{and} \quad \\
      & \int_{\R^d}\frac{|f(x)|^2}{|x|^\alpha}\,dx
        = \sum_{\ell\in L_d}\sum_{m\in M_\ell}\int_{\R^d}\frac{|f_{\ell,m}(x)|^2}{|x|^\alpha}\,dx.
    \end{align}
  \end{subequations}
\end{lemma}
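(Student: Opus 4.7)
The plan is to exploit the fact that both the nonlocal form $\ce^{(\alpha)}$ and the Hardy weight $|x|^{-\alpha}$ are manifestly rotation-invariant, so the spaces $V_{\ell,m}$ are mutually orthogonal not only in $L^2(\R^d)$ but also in the inner products induced by these two objects (whenever they are finite). Concretely, I would verify the vanishing of cross-terms $(\ell,m)\neq(\ell',m')$ for finite partial sums of the expansion \eqref{eq:l2expansionsphericalharmonics} and then pass to the full series by a positivity/monotone-convergence argument.

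For the Hardy-weight identity, I would write each $f_{\ell,m}(x)=u_{\ell,m}(|x|)|x|^\ell Y_{\ell,m}(x/|x|)$ and compute $\int_{\R^d} |x|^{-\alpha}\overline{f_{\ell,m}(x)} f_{\ell',m'}(x)\,dx$ in polar coordinates. Fubini factorizes this into a radial integral times $\int_{\bs^{d-1}} \overline{Y_{\ell,m}(\omega)}Y_{\ell',m'}(\omega)\,d\sigma(\omega)$, which vanishes unless $(\ell,m)=(\ell',m')$ by orthonormality of the spherical harmonics. Summing the diagonal terms gives the claim; absolute convergence is controlled by the Hardy inequality \eqref{eq:hardy} (which ensures $\int |x|^{-\alpha}|f|^2\,dx\lesssim \ce^{(\alpha)}[f]<\infty$ under the assumption $f\in H^{\alpha/2}(\R^d)$, provided $\alpha<d$).

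For the form $\ce^{(\alpha)}$, I would use the Fourier-side representation $\ce^{(\alpha)}[f]=\int_{\R^d}|\xi|^\alpha |\hat f(\xi)|^2\,d\xi$, which is \eqref{eq:dirichletformalpha} for $\alpha\in(0,2)$ and follows from Plancherel applied to \eqref{eq:deflaplacequadform} for $\alpha=2$. The decisive fact is Bochner's (Hecke's) identity for solid harmonics: since $|x|^\ell Y_{\ell,m}(x/|x|)$ is a homogeneous harmonic polynomial of degree $\ell$, one has $(\F f_{\ell,m})(\xi)=(-i)^\ell v_{\ell,m}(|\xi|)|\xi|^\ell Y_{\ell,m}(\xi/|\xi|)$ for some radial function $v_{\ell,m}$ (a Hankel-type transform of $u_{\ell,m}$). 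Hence $\F$ preserves each $V_{\ell,m}$ up to a unimodular factor. Combined with the fact that $|\xi|^\alpha$ is radial, the same polar-coordinate computation as in the previous paragraph shows that $\hat f_{\ell,m}$ and $\hat f_{\ell',m'}$ are orthogonal in $L^2(\R^d,|\xi|^\alpha d\xi)$ for $(\ell,m)\neq(\ell',m')$, yielding the desired diagonal decomposition of $\ce^{(\alpha)}[f]$.

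The main technical obstacle is justifying the passage from finite partial sums $S_N=\sum_{\ell\leq N,\,m\in M_\ell} f_{\ell,m}$ to $f$. For each $N$ the two identities are immediate from the orthogonality computations above. To conclude, I would note that $S_N\to f$ in $L^2(\R^d)$ and that both $(-\Delta)^{\alpha/2}$ and $|x|^{-\alpha/2}$ (viewed as operators from $H^{\alpha/2}$ to $L^2$) commute with the orthogonal projections onto $\bigoplus_{\ell\leq N}V_\ell$ (which are themselves rotation-invariant); hence $(-\Delta)^{\alpha/4}S_N\to(-\Delta)^{\alpha/4}f$ and $|x|^{-\alpha/2}S_N\to|x|^{-\alpha/2}f$ in $L^2(\R^d)$ by the spectral theorem applied to the angular part. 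Non-negativity of every summand then allows one to take $N\to\infty$ via monotone convergence, giving the two identities as stated.
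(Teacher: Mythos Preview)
Your proposal is correct and follows essentially the same approach as the paper, which simply attributes the result to the orthonormality of $\{Y_{\ell,m}\}$ in $L^2(\bs^{d-1})$ and cites \cite[Lemma~1.2]{BogdanMerz2024}. Your explicit use of the Hecke--Bochner identity to see that $\F$ preserves each $V_{\ell,m}$ is precisely the content of the paper's intertwining relation \eqref{eq:fourierbesselintertwine2} in Appendix~A, so the two arguments are the same in substance, with yours spelled out in more detail (including the passage to the limit via commutation of the angular projections with radial multipliers).
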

The statement follows from the orthonormality of $\{Y_{\ell,m}\}_{\ell\in L_d,m\in M_\ell}$ in $L^2(\bs^{d-1})$, see, e.g., \cite[Lemma~1.2]{BogdanMerz2024}. For $\alpha\in(0,2)$, Lemma~\ref{fourierbesselcor} actually extends to all $f\in L^2(\R^d)$.

\smallskip
In view of the angular momentum decomposition and the parameterization of $\kappa$ in terms of $\Phi_{d_\ell}^{(\alpha)}(\eta)$, we will, from now on, instead of~\eqref{eq:defhardyop}, consider the form
\begin{align}
  \label{eq:hardyforml}
  \begin{split}
    \ce_{d_\ell,\eta}^{(\alpha)}[f]\index{$\ce_{d_\ell,\eta}^{(\alpha)}$}
    := \ce^{(\alpha)}[f] - \Phi_{d_\ell}^{(\alpha)}(\eta) \int_{\R^d}\frac{|f(x)|^2}{|x|^\alpha}\,dx.
  \end{split}
\end{align}
To be precise, for $\alpha\in(0,2\wedge d_\ell)$ and $\eta\in(-\alpha,(d_\ell-\alpha)/2]$, we consider $\ce_{d_\ell,\eta}^{(\alpha)}$ on the set $\{[f]_{\ell,m}:\, f\in \D(\ci_{(d_\ell-1)/2),\eta})\}$, where $\ci_{(d_\ell-1)/2,\eta}$ is the quadratic form \eqref{eq:defizetaeta} arising in the ground state representation of $\ce_{d_\ell,\eta}^{(\alpha)}$; see Theorem~\ref{relationhardyformheatkernel} and \eqref{eq:hardyremainderagaintransformedlimit}, which assert
\begin{align}
  \label{eq:hardyl}
  \ce_{d_\ell,\eta}^{(\alpha)}[[u]_{\ell,m}] = \ci_{(d_\ell-1)/2,\eta}[u]
\end{align}
for all $\ell\in L_d$, 
$\eta\in(-\alpha,(d_\ell-\alpha)/2]$, and 
$u\in L^2(\R_+,r^{d_\ell-1}dr)$. 
In Theorem~\ref{relationhardyformheatkernel} we also show that $\ci_{(d_\ell-1)/2,\eta}$ is nonnegative, closed, and symmetric on its maximal domain. The associated self-adjoint operator (see, e.g., \cite[Theorem~1.3.1]{Fukushimaetal2011}) is denoted by $\cl_{\Phi_{d_\ell}^{(\alpha)}(\eta),\ell}$---as introduced in \eqref{eq:angularmomentumdecomposition}--\eqref{eq:lkappaell} and after \eqref{eq:hardylinformal}---when $\zeta=(d_\ell-1)/2$. Correspondingly, for $\ell\in L_d$ and $\eta\in(-\alpha,(d_\ell-\alpha)/2]$, we denote the self-adjoint operator associated with $\ce_{d_\ell,\eta}^{(\alpha)}$ by
\begin{align}
  \label{eq:hardyopl}
  \left(|D|^\alpha-\frac{\Phi_{d_\ell}^{(\alpha)}(\eta)}{|x|^\alpha}\right)\big|_{V_\ell} \quad \text{in} \ V_\ell.
\end{align}
For $\alpha=2$, \eqref{eq:hardyopl} and \eqref{eq:hardyforml} denote $(U_\ell^*\cl_{\kappa,\ell} U_\ell)\otimes\one_{L^2(\bs^{d-1})}$ and the associated quadratic form, with the self-adjoint operator $\cl_{\kappa,\ell}$ defined after \eqref{eq:deflzeta} and the unitary operator $U_\ell:L^2(\R_+,r^{d-1}dr)\to L^2(\R_+,r^{d_\ell-1}dr)$ introduced after \eqref{eq:angularmomentumdecomposition}.

\section{Hardy perturbations of subordinated Bessel kernels}
\label{s:pointwise}

In this section, we recall well-known and more recent results on ordinary and subordinated Bessel heat kernels, as well as Schr\"odinger perturbations thereof, which are crucial to prove Theorem~\ref{mainresult} for $\alpha\in(0,2\wedge d_\ell)$. As we explain in the ensuing section, the naked subordinated Bessel heat kernels naturally arise when considering heat kernels of fractional Laplacians restricted to a fixed angular momentum channel. Thus, to prove Theorem~\ref{mainresult} for $\alpha<2$, we will show that Schr\"odinger perturbations of the bare subordinated Bessel heat kernels by Hardy potentials coincide with the heat kernel of the Hardy operator, restricted to a fixed angular momentum channel. We will achieve this goal in Section~\ref{s:quadraticforms}.
Our setting in the present section is slightly more general than needed for Theorem~\ref{mainresult} in that we allow the index $\zeta-1/2$ of the Bessel process to be any number in $(-1,\infty)$, while Theorem~\ref{mainresult} only requires $\zeta=(d_\ell-1)/2$.

\subsection{Subordinated Bessel heat kernels}
\label{s:semigroupproperties}

We recall properties of the Gaussian heat kernel
\begin{align}
  \me{t\Delta}(x,y)
  = P^{(2)}(t,x,y)
  = \frac{1}{(4\pi t)^{d/2}}\exp\left(-\frac{|x-y|^2}{4t}\right), \quad t>0,\ x,y\in\R^d,
\end{align}
and its $\frac\alpha2$-subordinated semigroups $\me{-t|D|^\alpha}$, restricted to $V_{\ell}$ for each $\ell\in L_d$. These restrictions lead, according to Proposition~\ref{relationformheatkernel} below, to kernels defined on $\R_+$, which depend on $d,\ell,\alpha$, but not on $m\in M_\ell$. We will, in fact, consider more general kernels \emph{indexed} by $\alpha\in(0,2]$ and an \emph{arbitrary} parameter $\zeta\in(-1/2,\infty)$. In the context of Theorem~\ref{mainresult}, we may think of $\zeta$ as equal to $(d_\ell-1)/2$. We then define and analyze these kernels for $\alpha=2$ and use subordination to study the case $\alpha<2$. To that end, recall that for $\alpha\in(0,2)$ and $t>0$, by Bernstein's theorem, the completely monotone function $[0,\infty)\ni\lambda\mapsto\me{-t\lambda^{\alpha/2}}$  is the Laplace transform of a probability density function $\R_+\ni\tau\mapsto\sigma_t^{(\alpha/2)}(\tau)$. That is,
\begin{align}
  \label{eq:subordination}
  \me{-t\lambda^{\alpha/2}} = \int_0^\infty \me{-\tau\lambda}\,\sigma_t^{(\alpha/2)}(\tau)\,d\tau, \quad t>0,\,\lambda\geq0,
\end{align}
see, e.g., \cite[Chapter~5]{Schillingetal2012} and \cite[Appendix~B]{BogdanMerz2024} for some useful properties of and sharp estimates of $\sigma_t^{(\alpha/2)}(\tau)$ and further references. We now introduce the Bessel and $\tfrac\alpha2$-subordinated Bessel kernels.

\begin{definition}
  \label{defsemigroupptwise}
  \label{defsemigroupptwisealpha}
  Let $\zeta\in(-1/2,\infty)$. We consider the reference (speed) measure $r^{2\zeta}dr$ on $\R_+$ and, for $t>0$, $r,s\in\R_+$, define the Bessel heat kernel
  \begin{align}
    \label{eq:defpheatalpha2}
    \begin{split}
      p_\zeta^{(2)}(t,r,s) & : = \frac{(rs)^{1/2-\zeta}}{2t}\exp\left(-\frac{r^2+s^2}{4t}\right)I_{\zeta-1/2}\left(\frac{rs}{2t}\right),
    \end{split}
  \end{align}
  where $I_{\zeta-1/2}$ is the modified Bessel function of the first kind. For $\alpha\in(0,2)$, we let
\begin{align}
    \label{eq:defpheatalpha}
    \begin{split}
      p_\zeta^{(\alpha)}(t,r,s) & : = \int_0^\infty p_\zeta^{(2)}(\tau,r,s)\,\sigma_t^{(\alpha/2)}(\tau)\,d\tau.
    \end{split}
  \end{align}\index{$p_\zeta^{(\alpha)}(t,r,s)$}  
\end{definition}

The kernel $p_\zeta^{(2)}(t,r,s)$ is the transition density of the Bessel process of order $\zeta-1/2$ \emph{reflected at the origin}. We remark that the Bessel process of order $\zeta-1/2$ \emph{killed at the origin} has the transition density \eqref{eq:defpheatalpha2}, but with $I_{\zeta-1/2}(\cdot)$ replaced with $I_{|\zeta-1/2|}(\cdot)$. When $\zeta\geq1/2$, the Bessel process never hits the origin, i.e., no condition (reflecting or killing) needs to be imposed at the origin. See, e.g., \cite{Maleckietal2016}. For more details about $p_\zeta^{(2)}$, see, e.g., the textbooks \cite[Part I, Section~IV.6 or Appendix~1.21]{BorodinSalminen2002} or \cite[Chapter~XI]{RevuzYor1999}.
Note that $p_\zeta^{(\alpha)}(t,r,s)$ is a probability transition density and a strongly continuous contraction semigroup on $L^2(\R_+,r^{2\zeta}dr)$.

\begin{proposition}
  \label{summarypropertiespzeta}
  If $\zeta\in(-1/2,\infty)$, $\alpha\in(0,2]$, and $t,t',r,s>0$, then $p_\zeta^{(\alpha)}(t,r,s)>0$,
  \begin{align}
    \label{eq:normalizedalpha}
    & \int_0^\infty p_\zeta^{(\alpha)}(t,r,s) s^{2\zeta}\,ds = 1, \\
    \label{eq:chapman}
    & \int_0^\infty p_\zeta^{(\alpha)}(t,r,z) p_\zeta^{(\alpha)}(t',z,s) z^{2\zeta}\,dz = p_\zeta^{(\alpha)}(t+t',r,s), \\
    \label{eq:scalingalpha}
    & p_\zeta^{(\alpha)}(t,r,s) = t^{-\frac{2\zeta+1}{\alpha}} p_\zeta^{(\alpha)}\left(1,\frac{r}{t^{1/\alpha}},\frac{s}{t^{1/\alpha}}\right),
  \end{align}
  and $\{p_\zeta(t,\cdot,\cdot)\}_{t>0}$ is a strongly continuous contraction semigroup on $L^2(\R_+,r^{2\zeta}dr)$.
\end{proposition}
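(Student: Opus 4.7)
The plan is to first establish every assertion for $\alpha=2$ by direct computation with modified Bessel functions, and then to transfer every property to $\alpha\in(0,2)$ by subordination, using only elementary properties of the stable density $\sigma_t^{(\alpha/2)}(\tau)$ together with Fubini.

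\emph{Step 1: the case $\alpha=2$.} Since $\zeta-1/2>-1$, the modified Bessel function $I_{\zeta-1/2}$ is strictly positive on $\R_+$ (and extends continuously to $0$ with value $0$ or $\infty$ according to the sign of $\zeta-1/2$, but in either case remains strictly positive after multiplication by $(rs)^{1/2-\zeta}$), giving $p_\zeta^{(2)}(t,r,s)>0$. The normalization \eqref{eq:normalizedalpha} amounts to the classical integral identity $\int_0^\infty \me{-s^2/(4t)} I_{\zeta-1/2}(rs/(2t)) s^{\zeta+1/2}\,ds = (2t)^{\zeta+1/2} r^{\zeta-1/2} \me{r^2/(4t)}$; this is the well-known fact that $p_\zeta^{(2)}(t,r,s)$ is the transition density of the Bessel process of index $\zeta-1/2$ reflected at the origin, see \cite{BorodinSalminen2002,RevuzYor1999}. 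The Chapman--Kolmogorov identity \eqref{eq:chapman} follows from the Weber--Schafheitlin type product formula for $I_{\zeta-1/2}$, equivalently from the Hankel transform diagonalization of $\fl_\zeta$; alternatively it is a probabilistic consequence of the Markov property of the Bessel process. Scaling \eqref{eq:scalingalpha} is immediate from the explicit formula \eqref{eq:defpheatalpha2}. Finally, symmetry $p_\zeta^{(2)}(t,r,s)=p_\zeta^{(2)}(t,s,r)$ together with normalization show that $p_\zeta^{(2)}$ is a symmetric Markov kernel on $L^2(\R_+,r^{2\zeta}dr)$; hence the associated operator is a self-adjoint contraction, in fact it is $\me{-t\fl_\zeta}$ with $\fl_\zeta$ as defined in \eqref{eq:deflzeta} (Friedrichs extension for $\zeta\geq 1/2$, Krein extension for $\zeta\in(-1/2,1/2)$), and strong continuity follows from the standard theory of Friedrichs/Krein extensions quoted after \eqref{eq:deflzeta}.

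\emph{Step 2: subordination for $\alpha\in(0,2)$.} Since $\sigma_t^{(\alpha/2)}$ is a probability density, strict positivity of $p_\zeta^{(\alpha)}$ is inherited from that of $p_\zeta^{(2)}$. For the integral identities, I would apply Fubini (justified by positivity of all integrands): \eqref{eq:normalizedalpha} reduces to $\int_0^\infty \sigma_t^{(\alpha/2)}(\tau)\,d\tau=1$, and \eqref{eq:chapman} reduces, after interchanging the $z$-integral with the two $\tau$-integrals and applying the $\alpha=2$ version of Chapman--Kolmogorov, to the convolution identity $\sigma_t^{(\alpha/2)} * \sigma_{t'}^{(\alpha/2)} = \sigma_{t+t'}^{(\alpha/2)}$, which is just the semigroup property of the one-sided $\alpha/2$-stable subordinator encoded in \eqref{eq:subordination}. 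The scaling \eqref{eq:scalingalpha} follows by combining the scaling of $p_\zeta^{(2)}$ from Step~1 with the self-similarity $\sigma_t^{(\alpha/2)}(\tau)=t^{-2/\alpha}\sigma_1^{(\alpha/2)}(\tau t^{-2/\alpha})$ and changing variables in \eqref{eq:defpheatalpha}.

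\emph{Step 3: semigroup structure on $L^2$.} By Fubini and symmetry of $p_\zeta^{(2)}$, the kernel $p_\zeta^{(\alpha)}$ is symmetric; combined with positivity and \eqref{eq:normalizedalpha}, it defines a symmetric Markov operator, hence a contraction on $L^p(\R_+,r^{2\zeta}dr)$ for every $p\in[1,\infty]$, in particular on $L^2$. The Chapman--Kolmogorov relation from Step~2 then gives the semigroup property. Strong continuity is inherited from that of $\me{-\tau\fl_\zeta}$: for $f\in L^2(\R_+,r^{2\zeta}dr)$, $\|\me{-\tau\fl_\zeta}f-f\|\to 0$ as $\tau\to 0^+$ and is bounded by $2\|f\|$, while $\sigma_t^{(\alpha/2)}(\tau)\,d\tau$ concentrates at $0$ as $t\to 0^+$ (this is the defining property of the subordinator in \eqref{eq:subordination}), so dominated convergence applied to the integral representation of the subordinated semigroup yields $\|P_t^{(\alpha)}f-f\|\to 0$.

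\emph{Main obstacle.} The only non-routine input is the $\alpha=2$ Chapman--Kolmogorov identity, which rests on a specific Bessel function identity; however this is classical and can simply be invoked from \cite{BorodinSalminen2002,RevuzYor1999}. Everything else is a matter of Fubini, monotone/dominated convergence, and scaling of the stable subordinator.
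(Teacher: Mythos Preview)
Your proposal is correct and follows the standard route. The paper itself does not give a proof of this proposition but simply refers to \cite[Section~2]{BogdanMerz2024} and \cite[Lemma~1.4]{Bogdanetal2024}; the argument sketched there is precisely the one you outline---direct verification for $\alpha=2$ via the explicit Bessel kernel and then transfer to $\alpha<2$ by subordination, using Fubini, the convolution semigroup property $\sigma_t^{(\alpha/2)}*\sigma_{t'}^{(\alpha/2)}=\sigma_{t+t'}^{(\alpha/2)}$, and the self-similarity $\sigma_t^{(\alpha/2)}(\tau)=t^{-2/\alpha}\sigma_1^{(\alpha/2)}(\tau t^{-2/\alpha})$. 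One minor remark: in Step~3 the phrase ``dominated convergence'' is slightly loose since the measure $\sigma_t^{(\alpha/2)}(\tau)\,d\tau$ is itself varying with $t$; the clean way to phrase it is the usual splitting at a small $\delta>0$, using strong continuity of $\me{-\tau\fl_\zeta}$ on $[0,\delta]$ and $\sigma_t^{(\alpha/2)}([\delta,\infty))\to 0$ on the tail, but this is a cosmetic point and the argument is sound.
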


For a proof of Proposition~\ref{summarypropertiespzeta}, see, e.g., \cite[Section~2]{BogdanMerz2024} and \cite[Lemma~1.4]{Bogdanetal2024}.

\smallskip
In the following, we recall the sharp upper and lower bounds for $p_\zeta^{(\alpha)}(t,r,s)$ proved in \cite{BogdanMerz2025}, and the explicit expression when $\alpha=1$; see, e.g., \cite[p.~136]{Betancoretal2010} for a computation.

\begin{proposition}
  \label{heatkernelalpha1subordinatedboundsfinal}
  Let $\zeta\in(-1/2,\infty)$. Then, for all $r,s,t>0$,
  \begin{subequations}
    \label{eq:easybounds2}
    \begin{align}
      p_\zeta^{(2)}(t,r,s)
      \label{eq:easybounds2a}
      & \asymp_\zeta t^{-\frac12}\frac{\exp\left(-\frac{(r-s)^2}{ct}\right)}{(rs+t)^{\zeta}} \\
      \label{eq:easybounds2b}
      & \asymp_\zeta \left(1\wedge\frac{r}{t^{1/2}}\right)^{\zeta}\, \left(1\wedge\frac{s}{t^{1/2}}\right)^{\zeta}\,\left(\frac{1}{rs}\right)^{\zeta} \cdot t^{-\frac12} \cdot \exp\left(-\frac{(r-s)^2}{c_\zeta t}\right).
    \end{align}
  \end{subequations}
  Moreover, for all $\alpha\in(0,2)$ and all $r,s,t>0$,
  \begin{align}
    \label{eq:heatkernelalpha1weightedsubordinatedboundsfinal}
    \begin{split}
      p_\zeta^{(\alpha)}(t,r,s)
      & \sim_{\zeta,\alpha} \frac{t}{|r-s|^{1+\alpha}(r+s)^{2\zeta} + t^{\frac{1+\alpha}{\alpha}}(t^{\frac1\alpha}+r+s)^{2\zeta}}.
    \end{split}
  \end{align}
  For $\alpha=1$, we have, for all $r,s,t>0$,
  \begin{align}
    \label{eq:heatkernell}
    \begin{split}
      p_\zeta^{(1)}(t,r,s)
      & = \frac{2\Gamma(\zeta+1)}{\sqrt\pi} \cdot \frac{t}{\left(r^2+s^2+t^2\right)^{\zeta+1}} \\
        & \quad \times\, _2\tilde{F}_1\left(\zeta+1,\zeta+2;\zeta+\frac{1}{2};\frac{4r^2s^2}{\left(r^2+s^2+t^2\right)^2}\right).
    \end{split}
  \end{align}
  Moreover, considering $\zeta\in\{0,1\}$, we et
  \begin{subequations}
    \label{eq:heatkernell0}
    \begin{align}
      p_0^{(1)}(t,r,s) & = \frac{2 t}{\pi  \left(r^2+s^2+t^2\right) \left(1- 4 r^2 s^2 \cdot \left(r^2+s^2+t^2\right)^{-2}\right)}, \\
      p_1^{(1)}(t,r,s) & = \frac{4}{\pi}\, \frac{t}{(r^2-s^2)^2+t^2(t^2+2r^2+2s^2)}.
    \end{align}
  \end{subequations}
\end{proposition}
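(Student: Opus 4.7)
The proposition collects results of \cite{BogdanMerz2025} (for the bounds) and \cite{Betancoretal2010} (for the $\alpha=1$ closed form); below is a sketch of how I would proceed from scratch.

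For the Gaussian case $\alpha=2$, the plan is to start from the explicit formula \eqref{eq:defpheatalpha2} and apply the sharp two-sided asymptotics
\[
I_\nu(z)\asymp_\nu z^\nu(1+z)^{-\nu-1/2}\,\me{z},\qquad z>0,\ \nu>-1/2,
\]
obtained by patching the small-argument behaviour $I_\nu(z)\sim z^\nu/(2^\nu\Gamma(\nu+1))$ with the large-argument asymptotics $I_\nu(z)\sim\me{z}/\sqrt{2\pi z}$ (\cite[\S10.30, \S10.40]{NIST:DLMF}) and exploiting monotonicity of $z\mapsto I_\nu(z)/z^\nu$. With $\nu=\zeta-1/2$, the prefactor $(rs)^{1/2-\zeta}$ in \eqref{eq:defpheatalpha2} cancels, the exponentials combine into $\me{-(r-s)^2/(4t)}$, and the residual polynomials collapse to $t^{-1/2}(rs+t)^{-\zeta}$, yielding \eqref{eq:easybounds2a}. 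To pass to the equivalent form \eqref{eq:easybounds2b} I would split on $rs\gtrless t$: in each regime the two polynomial prefactors agree up to a constant, and in the transition window (one of $r,s$ of order $\sqrt t$, the other much larger) any polynomial discrepancy is absorbed by $\exp(-(r-s)^2/(ct))$ upon enlarging or shrinking $c$, which is precisely the flexibility that $\asymp$ permits beyond $\sim$.

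For the subordinated regime $\alpha\in(0,2)$, I would combine the subordination identity \eqref{eq:defpheatalpha} with the Gaussian bound \eqref{eq:easybounds2a} and sharp two-sided estimates for the stable density $\sigma_t^{(\alpha/2)}$---its polynomial large-$\tau$ tail, stretched-exponential small-$\tau$ decay, and the scaling $\sigma_t^{(\alpha/2)}(\tau)=t^{-2/\alpha}\sigma_1^{(\alpha/2)}(\tau t^{-2/\alpha})$ recalled in \cite[Appendix~B]{BogdanMerz2024}. The Gaussian factor $\exp(-(r-s)^2/(c\tau))$ effectively confines the relevant range of integration to $\tau\gtrsim(r-s)^2$, after which the subordinator's polynomial tail controls the $\tau$-integral. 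Tracking the factor $(rs+\tau)^{-\zeta}$ across the thresholds $\tau\sim t^{2/\alpha}$, $\tau\sim(r+s)^2$ and $\tau\sim(r-s)^2$ and collapsing the resulting case analysis yields the rational expression in \eqref{eq:heatkernelalpha1weightedsubordinatedboundsfinal}. This bookkeeping is the main obstacle and is the technical core of \cite{BogdanMerz2025}; the delicate point is converting a Gaussian-type bound spread across several sub-regimes into a single clean polynomial formula without losing sharpness on either side.

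For the explicit formula at $\alpha=1$, the $1/2$-stable subordinator has density $\sigma_t^{(1/2)}(\tau)=(t/(2\sqrt\pi))\tau^{-3/2}\me{-t^2/(4\tau)}$. Inserting this into \eqref{eq:defpheatalpha} and combining the exponentials gives
\[
p_\zeta^{(1)}(t,r,s)=\frac{t(rs)^{1/2-\zeta}}{4\sqrt\pi}\int_0^\infty \tau^{-5/2}\me{-(r^2+s^2+t^2)/(4\tau)}I_{\zeta-1/2}\!\left(\frac{rs}{2\tau}\right)d\tau.
\]
After the substitution $u=1/(4\tau)$ this reduces to a Laplace transform of the type $\int_0^\infty u^{1/2}\me{-Au}I_{\zeta-1/2}(Bu)\,du$ with $A=(r^2+s^2+t^2)/4$ and $B=rs/2$; this transform is evaluable in closed form as a Gauss hypergeometric function in the variable $(B/A)^2=4r^2s^2/(r^2+s^2+t^2)^2$ via standard integral tables. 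Tracking the constants produces \eqref{eq:heatkernell}. The specializations \eqref{eq:heatkernell0} follow from elementary reductions of $_2F_1$ when an upper parameter coincides with the lower one---via $_2F_1(a,b;a;z)=(1-z)^{-b}$ the hypergeometric series terminates to a rational function---which, after recombining with the $(r^2+s^2+t^2)^{-\zeta-1}$ prefactor and simplifying $1-z$, yields the displayed closed forms.
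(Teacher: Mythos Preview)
Your identification is correct: the paper gives no proof of this proposition and simply cites \cite{BogdanMerz2025} for the bounds \eqref{eq:easybounds2}--\eqref{eq:heatkernelalpha1weightedsubordinatedboundsfinal} and \cite{Betancoretal2010} for the $\alpha=1$ formula, exactly as you note in your first sentence. Your sketch of how one would reproduce these results is sound in outline and matches the strategy of the cited sources.

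One small correction concerns the last step. For the specializations \eqref{eq:heatkernell0} you invoke $_2F_1(a,b;a;z)=(1-z)^{-b}$, but the parameter triples arising from \eqref{eq:heatkernell} are $(1,2;\tfrac12)$ for $\zeta=0$ and $(2,3;\tfrac32)$ for $\zeta=1$, so no upper parameter coincides with the lower one and that identity does not apply. The cleanest route is instead to bypass the hypergeometric entirely: for $\zeta\in\{0,1\}$ the Bessel index $\zeta-\tfrac12\in\{-\tfrac12,\tfrac12\}$ is a half-integer, so $I_{-1/2}(x)=\sqrt{2/(\pi x)}\cosh x$ and $I_{1/2}(x)=\sqrt{2/(\pi x)}\sinh x$ reduce $p_\zeta^{(2)}$ to a sum, respectively difference, of one-dimensional Gaussian heat kernels, and subordination by $\sigma_t^{(1/2)}$ then yields a sum or difference of Cauchy kernels $\frac{t}{\pi(t^2+(r\pm s)^2)}$, which recombine algebraically into \eqref{eq:heatkernell0}.
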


Using the explicit expressions for the $\alpha/2$-stable subordination density for all rational $\alpha/2$ \cite{PensonGorska2010}, one could in principle compute $p_\zeta^{(\alpha)}(t,r,s)$ also in these cases.

\subsection{Hardy-type Schr\"odinger perturbation}
\label{ss:defperturbedsemigroup}

We now present the Schr\"odinger perturbation of $p_\zeta^{(\alpha)}$ by the Hardy potential, first constructed in \cite{Bogdanetal2024}.
The following results are only used for $\alpha<2$. Thus, even though some of them also hold for $\alpha=2$, we choose to consider only $\alpha<2$ for a coherent presentation.
For $\zeta\in(-1/2,\infty)$, $\alpha\in(0,2)\cap(0,2\zeta+1)$, and $\eta\in(-\alpha,2\zeta+1)$, let
\begin{align}
  \label{eq:defpsietazetaOLD}
  \Psi_\zeta(\eta) := \frac{2^\alpha\Gamma\left(\frac{2\zeta+1-\eta}{2}\right)\Gamma\left(\frac{\alpha+\eta}{2}\right)}{\Gamma\left(\frac{\eta}{2}\right)\Gamma\left(\frac{2\zeta+1-\eta-\alpha}{2}\right)}
  \quad \text{and} \quad
  q(z) := \frac{\Psi_\zeta(\eta)}{z^\alpha} \quad \text{for} \ z>0.
\end{align}\index{$\Psi_\zeta(\eta)$}\index{$q(z)$}
In particular, $\Psi_\zeta(\eta)$ is symmetric around $\eta=(2\zeta+1-\alpha)/2$, strictly positive for $\eta\in[0,2\zeta+1-\alpha]$, zero if $\eta\in\{0,2\zeta+1-\alpha\}$, and tends to $-\infty$ as $\eta\to-\alpha$ or $\eta\to2\zeta+1$. To make a connection to the Introduction, note that $\Psi_{(d_\ell-1)/2}(\eta)=\Phi_{d_\ell}^{(\alpha)}(\eta)$.

\begin{definition}
  \label{deffeynmankactransformed}
  For $\zeta\in(-1/2,\infty)$, $\alpha\in(0,2)\cap(0,2\zeta+1)$, and $\eta\in (-\alpha,\frac{2\zeta+1-\alpha}{2}]$, let $p_{\zeta,\eta}^{(\alpha)}$ be the Schr\"odinger perturbation of $p_\zeta^{(\alpha)}$ by $q$, defined in \cite[Definition~1.5]{Bogdanetal2024}.
\end{definition}

\begin{remark}
For \( \eta > 0 \), \( p_{\zeta, \eta}^{(\alpha)} \) can be given explicitly via the perturbation series,
  \begin{align}
    \label{eq:feynmankactransformed}
    \begin{split}
      p_{\zeta,\eta}^{(\alpha)}(t,r,s)
      & := \sum_{n\geq0} p_t^{(n,D)}(r,s), \quad \mbox{where}\quad 
        p_t^{(0,D)}(r,s) := p_\zeta^{(\alpha)}(t,r,s), \\
      p_t^{(n,D)}(r,s) & := \int_0^t d\tau \int_0^\infty dz\, z^{2\zeta} p_\zeta^{(\alpha)}(t,r,z) q(z) p_{t-\tau}^{(n-1,D)}(z,s), \quad n\in\N.
    \end{split}
  \end{align}\index{$p_{\zeta,\eta}^{(\alpha)}$}Here, each term $p_t^{(n,D)}(r,s)$ may be understood as an iteration of \emph{Duhamel's} formula below, hence the superscript $D$.
\end{remark}

The following results are proved in \cite{Bogdanetal2024}.

\begin{theorem}
  \label{propertiesschrodheatkernel}
  Let $\zeta\in(-1/2,\infty)$, $\alpha\in(0,2)\cap(0,2\zeta+1)$, and $\eta\in(-\alpha,\frac{2\zeta+1-\alpha}{2}]$, $r,s,t,t'>0$. Then the following statements hold.\\
  \textup{(1)}
  We have the Chapman--Kolmogorov equation
  \begin{align}
    \int_0^\infty dz\, z^{2\zeta} p_{\zeta,\eta}^{(\alpha)}(t,r,z) \, p_{\zeta,\eta}^{(\alpha)}(t',z,s)
    & = p_{\zeta,\eta}^{(\alpha)}(t+t',r,s).
  \end{align}
  \textup{(2)}
  We have the Duhamel formulae
  \begin{align}
    \label{eq:duhamelclassictransformed}
    \begin{split}
      p_{\zeta,\eta}^{(\alpha)}(t,r,s)
      & = p_\zeta^{(\alpha)}(t,r,s) + \int_0^t d\tau \int_0^\infty dz\, z^{2\zeta} p_\zeta^{(\alpha)}(\tau,r,z) q(z)\, p_{\zeta,\eta}^{(\alpha)}(t-\tau,z,s) \\
      & = p_\zeta^{(\alpha)}(\tau,r,s) + \int_0^t d\tau \int_0^\infty dz\, z^{2\zeta} p_{\zeta,\eta}^{(\alpha)}(\tau,r,z) q(z)\, p_\zeta^{(\alpha)}(t-\tau,z,s).
    \end{split}
  \end{align}
  \textup{(3)}
  We have the scaling relation
  \begin{align}
    \label{eq:scalingalphahardy}
    p_{\zeta,\eta}^{(\alpha)}(t,r,s)
    = t^{-\frac{2\zeta+1}{\alpha}} p_{\zeta,\eta}^{(\alpha)}\left(1,\frac{r}{t^{1/\alpha}},\frac{s}{t^{1/\alpha}}\right).
  \end{align}
  \textup{(4)}
  The function $h(r)=r^{-\eta}$ is invariant under $p_{\zeta,\eta}^{(\alpha)}$, i.e.,
  \begin{align}
    \label{eq:supermediantransformedalphaintro}
    \begin{split}
      \int_0^\infty ds\, s^{2\zeta} p_{\zeta,\eta}^{(\alpha)}(t,r,s) h(s) = h(r), \quad t>0.
    \end{split}
  \end{align}
  \textup{(5)}
  For $\eta>0$, $p_\zeta^{(\alpha)}(t,r,s)\leq p_{\zeta,\eta}^{(\alpha)}(t,r,s)$, and for $\eta<0$, $p_\zeta^{(\alpha)}(t,r,s)\geq p_{\zeta,\eta}^{(\alpha)}(t,r,s)$.
  \\
  \textup{(6)}
  $\{p_{\zeta,\eta}^{(\alpha)}(t,\cdot,\cdot)\}_{t\geq0}$ is a strongly continuous semigroup of contractions on $L^2(\R_+,r^{2\zeta}dr)$.
\end{theorem}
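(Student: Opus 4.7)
I would prove all six properties using the explicit perturbation series \eqref{eq:feynmankactransformed}, which is well-defined and non-negative in the attractive regime $\eta \in (0, (2\zeta+1-\alpha)/2]$ (where $\Psi_\zeta(\eta) > 0$). The repulsive case $\eta \in (-\alpha, 0]$ could then be handled by a regularisation--limit procedure, replacing $q$ by $q(z)\, \mathbbm{1}_{\{z > \varepsilon\}}$ (now bounded, so that the standard Feynman--Kac/Dyson expansion applies), and then passing to $\varepsilon \to 0$; alternatively, one may interpret the signed series termwise.

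The key analytic input is the ground-state identity
\[
\int_0^\infty z^{2\zeta}\, p_\zeta^{(\alpha)}(t,r,z)\, z^{-\eta}\,dz \;+\; \int_0^t d\tau \int_0^\infty z^{2\zeta}\, p_\zeta^{(\alpha)}(\tau,r,z)\, q(z)\, z^{-\eta}\,dz \;=\; r^{-\eta},
\]
which expresses that $h(r) = r^{-\eta}$ is $q$-harmonic for $p_\zeta^{(\alpha)}$; this is consistent with the fact that $\Psi_\zeta(\eta)$ is chosen precisely so that $r^{-\eta}$ is a pointwise null solution of the associated radial Hardy operator. I would verify this identity first for $\alpha = 2$ by a direct computation with the modified Bessel function $I_{\zeta-1/2}$ appearing in \eqref{eq:defpheatalpha2}, and then extend to $\alpha < 2$ by the subordination formula \eqref{eq:defpheatalpha} and Fubini. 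Inserted into \eqref{eq:feynmankactransformed}, the identity yields by induction on $n$ the bound $\int_0^\infty s^{2\zeta}\, p_t^{(n,D)}(r,s)\, s^{-\eta}\,ds \leq r^{-\eta}$, which gives absolute convergence of the series tested against $h$, and in the limit the invariance statement (4).

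With convergence and (4) in hand, the remaining properties follow by structural arguments. (1) Chapman--Kolmogorov is obtained by expanding the Cauchy product of the series for $p_{\zeta,\eta}^{(\alpha)}(t,\cdot,\cdot)$ and $p_{\zeta,\eta}^{(\alpha)}(t',\cdot,\cdot)$, applying the semigroup identity \eqref{eq:chapman} for the unperturbed kernel in each inner integral, and reparametrising the resulting time simplex. (2) The two Duhamel formulae are immediate from the series by peeling off the first or the last factor of $q$. (3) Scaling follows from \eqref{eq:scalingalpha} together with the $(-\alpha)$-homogeneity $q(\lambda z) = \lambda^{-\alpha} q(z)$ via a termwise change of variables. (5) Monotonicity in $\eta$ is immediate from the sign of $q$ and the non-negativity of each term $p_t^{(n,D)}$. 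For (6), symmetry of $p_{\zeta,\eta}^{(\alpha)}(t,r,s)$ in $(r,s)$ is inherited termwise from the symmetry of $p_\zeta^{(\alpha)}$; the semigroup law is exactly (1); the invariance (4) together with Fatou yields $L^2(\R_+, r^{2\zeta}\,dr)$-contractivity via a Schur-test argument weighted by $h$; and strong continuity descends from that of $p_\zeta^{(\alpha)}$ by dominated convergence, majorised by the bound implicit in (4).

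\textbf{Main obstacle.} The crux of the argument is the ground-state identity together with the uniform control of the perturbation series as $\eta$ approaches the critical value $(2\zeta+1-\alpha)/2$, where $\Psi_\zeta(\eta)$ attains its maximum and the inductive bounds become borderline, and as $\eta \downarrow -\alpha$ in the repulsive range. Reconciling the attractive, series-based construction with a consistent definition of $p_{\zeta,\eta}^{(\alpha)}$ for $\eta \in (-\alpha, 0]$, and verifying that invariance (4) persists there, will be the main technical hurdle; everything else in the theorem is structural once these ingredients are settled.
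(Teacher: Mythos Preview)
The paper does not actually prove Theorem~\ref{propertiesschrodheatkernel}; it states the result and cites \cite{Bogdanetal2024}, where the construction of $p_{\zeta,\eta}^{(\alpha)}$ and the proofs of (1)--(6) are carried out. So there is no ``paper's own proof'' to compare against, only the surrounding context.

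Your sketch is a reasonable outline of how such a proof would go, and it matches the ingredients visible in the paper. In particular, your ``ground-state identity'' is exactly Lemma~\ref{jaklem23} (stated there for $\eta\in(-\alpha,0)$, but the same computation works for $\eta\in(0,(2\zeta+1-\alpha)/2]$), and the perturbation-series definition you rely on for $\eta>0$ is precisely the Remark following Definition~\ref{deffeynmankactransformed}. One small correction: the inductive bound you want is on the \emph{partial sums} $\sum_{n\le N}\int s^{2\zeta}p_t^{(n,D)}(r,s)h(s)\,ds\le h(r)$, not on each term separately; the recursion for the partial sums, combined with the ground-state identity, closes the induction and gives convergence plus invariance in one stroke. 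Your Schur-test argument for $L^2$-contractivity from (4) and symmetry is correct.

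Your identification of the main obstacle is accurate. For $\eta<0$ the potential $q$ is negative (killing), and the paper's Definition~\ref{deffeynmankactransformed} explicitly defers to \cite{Bogdanetal2024} rather than using the series~\eqref{eq:feynmankactransformed}. Your regularisation proposal $q\mathbbm{1}_{\{z>\varepsilon\}}$ is plausible, but note that for a negative bounded potential the standard Feynman--Kac theory already gives a well-defined sub-Markovian semigroup, so the construction is in some ways easier than the attractive case; the delicate point is verifying that the limit as $\varepsilon\to0$ exists and that (4) survives, which is where Lemma~\ref{jaklem23} enters. Since the full argument lives in \cite{Bogdanetal2024}, your sketch is as much as one can reasonably reconstruct from the present paper alone.
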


The proof of Theorem~\ref{mainresult} for $\alpha\in(0,2\wedge d_\ell)$ essentially relies on the following sharp pointwise bounds for $p_{\zeta,\eta}^{(\alpha)}(t,r,s)$.

\begin{theorem}[{\cite[Theorem~1.1]{Bogdanetal2024}}]
  \label{mainresultgen}
  Let $\zeta\in(-1/2,\infty)$, $\alpha\in(0,2)\cap(0,2\zeta+1)$, and $\eta\in(-\alpha,\frac{2\zeta+1-\alpha}{2}]$. Then, $p_{\zeta,\eta}^{(\alpha)}(t,r,s)$ is jointly continuous as a function of $r,s,t>0$,
  \begin{align}
    \label{eq:mainresultgenalpha}
      p_{\zeta,\eta}^{(\alpha)}(t,r,s)
      \sim_{\zeta,\alpha,\eta} \left(1\wedge\frac{r}{t^{1/\alpha}}\right)^{-\eta}\, \left(1\wedge\frac{s}{t^{1/\alpha}}\right)^{-\eta}\, p_\zeta^{(\alpha)}(t,r,s) \quad \text{for} \ \alpha<2,
  \end{align}
  and
  \begin{align}
    \label{eq:mainresultgen2}
    p_{\zeta,\eta}^{(2)}(t,r,s)
    \asymp_{\zeta,\eta} \left(1\wedge\frac{r}{t^{1/2}}\right)^{-\eta}\, \left(1\wedge\frac{s}{t^{1/2}}\right)^{-\eta}\, p_\zeta^{(2)}(t,r,s).
  \end{align}
\end{theorem}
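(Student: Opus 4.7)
The plan is to reduce to $t=1$ via \eqref{eq:scalingalphahardy}, so that it suffices to prove
\begin{equation*}
  p_{\zeta,\eta}^{(\alpha)}(1,r,s) \sim (1\wedge r)^{-\eta}(1\wedge s)^{-\eta}\, p_\zeta^{(\alpha)}(1,r,s)
\end{equation*}
(and the $\alpha=2$ analogue with $\asymp$). At $t=1$, the weights $(1\wedge r)^{-\eta}$ and $(1\wedge s)^{-\eta}$ are the cut-offs of the generalized ground state $h(r)=r^{-\eta}$, which is invariant under $p_{\zeta,\eta}^{(\alpha)}$ by \eqref{eq:supermediantransformedalphaintro}. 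Three structural inputs drive the analysis: the sharp two-sided bounds for $p_\zeta^{(\alpha)}$ in Proposition~\ref{heatkernelalpha1subordinatedboundsfinal}, the perturbation series and Duhamel formulae \eqref{eq:feynmankactransformed} and \eqref{eq:duhamelclassictransformed}, and the invariance of $h$.

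For the upper bound in the attractive range $\eta\in[0,(2\zeta+1-\alpha)/2]$, the plan is to derive from Proposition~\ref{heatkernelalpha1subordinatedboundsfinal} a 3G-type inequality
\begin{equation*}
  \frac{p_\zeta^{(\alpha)}(\tau,r,z)\,p_\zeta^{(\alpha)}(1-\tau,z,s)}{p_\zeta^{(\alpha)}(1,r,s)} \;\lesssim\; G(r,z,\tau)+G(s,z,1-\tau),
\end{equation*}
with $G$ chosen so that $\int_0^1 d\tau\int_0^\infty G(r,z,\tau)\,z^{2\zeta-\alpha}\,dz \lesssim (1\wedge r)^{-\eta}$. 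Iterating this on the $n$-th term $p_1^{(n,D)}$ in \eqref{eq:feynmankactransformed} factors out $p_\zeta^{(\alpha)}(1,r,s)$ and produces $p_1^{(n,D)}(r,s)\lesssim c^n\,(1\wedge r)^{-\eta}(1\wedge s)^{-\eta}\,p_\zeta^{(\alpha)}(1,r,s)$, where $c$ is proportional to $\Psi_\zeta(\eta)$ times the $L^1$-norm of the 3G weight. The subcriticality $\eta\leq(2\zeta+1-\alpha)/2$ is exactly what keeps $c<1$, so the series is summable geometrically. For the lower bound, Theorem~\ref{propertiesschrodheatkernel}(5) provides $p_{\zeta,\eta}^{(\alpha)}\geq p_\zeta^{(\alpha)}$, which already gives the correct estimate in the region $r,s\gtrsim 1$; in the complementary region, a single Duhamel iteration \eqref{eq:duhamelclassictransformed} supplies the singular factor $r^{-\eta}$ via the integral of $p_\zeta^{(\alpha)}(\tau,r,z)\,q(z)\,p_\zeta^{(\alpha)}(1-\tau,z,s)$ against $z^{2\zeta}\,dz\,d\tau$, evaluated using Proposition~\ref{heatkernelalpha1subordinatedboundsfinal}.

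The repulsive range $\eta\in(-\alpha,0)$ and the critical endpoint $\eta=(2\zeta+1-\alpha)/2$ are addressed by passing to the Doob $h$-transform $\hat p(t,r,s):=h(r)^{-1}\,p_{\zeta,\eta}^{(\alpha)}(t,r,s)\,h(s)$, which by the invariance \eqref{eq:supermediantransformedalphaintro} is a genuine stochastic transition kernel on $L^2(\R_+,r^{2\zeta+2\eta}\,dr)$; bounds for $\hat p$ transfer back to $p_{\zeta,\eta}^{(\alpha)}$ upon multiplication by $h(r)h(s)^{-1}$. The case $\alpha=2$ follows the same blueprint, but the upper and lower bounds in \eqref{eq:easybounds2} differ in the constant inside the Gaussian exponent, which is responsible for the weaker conclusion $\asymp$ in place of $\sim$. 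Joint continuity of $p_{\zeta,\eta}^{(\alpha)}$ follows from pointwise continuity of each term in \eqref{eq:feynmankactransformed} together with the uniform tail bounds established in the upper-bound step. The main obstacle is producing the 3G inequality with constants sharp enough to cover the full subcritical range up to and including the critical threshold, given the two-regime structure of the bounds \eqref{eq:heatkernelalpha1weightedsubordinatedboundsfinal}.
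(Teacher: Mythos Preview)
This theorem is not proved in the present paper; it is quoted verbatim from \cite[Theorem~1.1]{Bogdanetal2024}, so there is no in-paper proof to compare against. Your outline is broadly consistent with the techniques used there and in the prototype \cite{Bogdanetal2019}: scaling to $t=1$, 3G-type inequalities combined with Duhamel iteration, the invariance \eqref{eq:supermediantransformedalphaintro} of $h$, and the Doob $h$-transform for the repulsive range.

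There is, however, a genuine gap in your mechanism for the upper bound. You claim that iterating a 3G inequality on the terms $p_1^{(n,D)}$ yields $p_1^{(n,D)}\lesssim c^n(1\wedge r)^{-\eta}(1\wedge s)^{-\eta}p_\zeta^{(\alpha)}$ with $c<1$ precisely when $\eta\le(2\zeta+1-\alpha)/2$. But a 3G inequality of the form you wrote carries an implicit comparability constant from Proposition~\ref{heatkernelalpha1subordinatedboundsfinal}, and there is no reason this constant times $\Psi_\zeta(\eta)$ times the integrated weight should drop below $1$---certainly not at the critical endpoint, where $\Psi_\zeta(\eta)$ is maximal. You correctly flag this as ``the main obstacle'' in your last sentence, but flagging it is not resolving it. The actual argument in \cite{Bogdanetal2019,Bogdanetal2024} does not rely on a geometric ratio $<1$ coming from 3G constants; it exploits instead the \emph{exact} supermedian identity for $h$ with respect to $p_\zeta^{(\alpha)}$ (the unperturbed kernel), which produces inequalities with no spurious multiplicative constants and allows the Duhamel iterates to be controlled uniformly up to and including the critical coupling. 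Without that sharper input, your iteration scheme does not close.

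A second, smaller issue: for $\eta<0$ the perturbation series \eqref{eq:feynmankactransformed} is not the definition of $p_{\zeta,\eta}^{(\alpha)}$ (see Definition~\ref{deffeynmankactransformed} and the remark following it), so ``joint continuity from pointwise continuity of each term in \eqref{eq:feynmankactransformed}'' is not available in that range; the construction in \cite{Bogdanetal2024} proceeds by compensation rather than by summing a Dyson series.
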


We, however, need to make a connection between the kernel $p_{\zeta,\eta}^{(\alpha)}$ and suitable quadratic forms. This is the main subject of this paper and the focus of the next section.

\section{Quadratic forms}
\label{s:quadraticforms}

In this section, we demonstrate that the quadratic form generated by \( p_{(d_\ell - 1)/2, \eta}^{(\alpha)} \) coincides with the radial part of the form \( \ce_{d_\ell, \eta}^{(\alpha)} \). We state this result precisely in the main result of this section, Theorem~\ref{relationhardyformheatkernel} below. Along with Theorem~\ref{mainresultgen}, it serves as a crucial ingredient in proving Theorem~\ref{mainresult} for $\alpha\in(0,2\wedge d_\ell)$.
The following results are only used for $\alpha<2$. As in the previous section, even though some of them also hold for $\alpha=2$, we will consider only $\alpha<2$ for a coherent presentation.

\subsection{Definition of quadratic forms on $\R_+$}
\label{s:defquadformsemigroup}

We first define the relevant quadratic forms on $\R_+$.
For $\zeta\in(-1/2,\infty)$, $\alpha\in(0,2)$, and $r,s\in\R_+$ with $r\neq s$, let
\begin{align}
  \label{eq:defnuell1}
  \begin{split}
    \nu_{\zeta}(r,s)
    & := \lim_{t\searrow0}\frac{p_\zeta^{(\alpha)}(t,r,s)}{t}
      = \lim_{t\searrow0} \int_0^\infty p_\zeta^{(2)}(\tau,r,s)\frac{\sigma_t^{(\alpha/2)}(\tau)}{t}\,d\tau \\
    & \ = 2^{1+\alpha} \frac{\Gamma\left(\frac\alpha2+1\right)\,\sin\left(\frac{\pi\alpha}{2}\right)}{\pi}\, \Gamma\left(\zeta+\frac\alpha2+\frac12\right)\, (r^2+s^2)^{-(\zeta+\frac\alpha2+\frac12)} \\
    & \qquad \times\, _2\tilde F_1\left(\frac{\zeta+\alpha/2+1/2}{2}, \frac{\zeta+\alpha/2+3/2}{2}; \zeta+\frac12; \frac{4(rs)^2}{(r^2+s^2)^2}\right) \\
    & \ \sim_{\zeta,\alpha} \frac{1}{(r+s)^{2\zeta} \cdot |r-s|^{1+\alpha}}
  \end{split}
\end{align}\index{$\nu_\zeta(r,s)$}
be the L\'evy kernel associated to $p_\zeta^{(\alpha)}(t,r,s)$. The kernel obeys
\begin{align}
  \label{eq:defnuell4}
  \frac{p_\zeta^{(\alpha)}(t,r,s)}{t}
  \lesssim_{\zeta,\alpha} \nu_{\zeta}(r,s), \quad r,s,t>0.
\end{align}
See \cite[Proposition~2.4]{BogdanMerz2024} for proofs of \eqref{eq:defnuell1} and \eqref{eq:defnuell4}.
By the general theory \cite[Section~1.3]{Fukushimaetal2011}, the quadratic form associated to $p_\zeta^{(\alpha)}(t,\cdot,\cdot)$,
\begin{align}
  \label{eq:defezeta}
  \begin{split}
    \ce_\zeta[u]
    & := \lim_{t\to0}\frac1t\langle u,u-p_\zeta^{(\alpha)}(t,\cdot,\cdot)u\rangle_{L^2(\R_+,r^{2\zeta}dr)},\quad u\in L^2(\R_+,r^{2\zeta}dr),
  \end{split}
\end{align}\index{$\ce_\zeta$}is well-defined for all $\alpha\in(0,2)$. We denote the maximal domain of $\ce_\zeta$ by
\begin{align}
  \label{eq:maximaldomainezeta}
  \D(\ce_\zeta) := \{v\in L^2(\R_+,r^{2\zeta}dr):\,\ce_\zeta[v]<\infty\}.
\end{align}
Moreover,
\begin{align}
  \label{eq:ezetalimit}
  \ce_\zeta[u] = \int_0^\infty dr\, r^{2\zeta} \int_0^\infty ds\, s^{2\zeta} |u(r)-u(s)|^2\,\nu_{\zeta}(r,s),
\end{align}
which follows from \eqref{eq:defnuell4} and the Dominated Convergence Theorem if the right-hand side is finite, or---in the opposite case---by Fatou's lemma.
By the Hardy inequality in \cite{BogdanMerz2024}, the form sum
\begin{align}
  \ce_\zeta[u] - \Psi_\zeta(\eta)\int_{\R_+}|u(r)|^2\, r^{2\zeta-\alpha}\,dr,
  \quad u\in C_c^\infty(\R_+),
\end{align}
is non-negative for all $\eta\in(-\alpha,\frac{2\zeta+1-\alpha}{2}]$. Moreover, for $h(r)=r^{-\eta}$, we have the ground state representation
\begin{align}
  \label{eq:hardyremainderagaintransformedlimit}
  \begin{split}
    \ce_\zeta[u]
    & = \ci_{\zeta,\eta}[u] + \Psi_\zeta(\eta)\int_{\R_+} \frac{|u(r)|^2}{r^\alpha}\,r^{2\zeta}\,dr
  \end{split}
\end{align}
with 
\begin{align}
  \label{eq:defizetaeta}
  \begin{split}
    \ci_{\zeta,\eta}[u]
    & := \frac12\int_0^\infty dr\, r^{2\zeta}\int_0^\infty ds\, s^{2\zeta}\, \nu_{\zeta}(r,s) \left|\frac{u(r)}{h(r)} - \frac{u(s)}{h(s)}\right|^2 h(r) h(s),
  \end{split}
\end{align}\index{$\ci_{\zeta,\eta}$}
which is finite on the maximal domain
\begin{align}
  \D(\ci_{\zeta,\eta}) := \{v\in L^2(\R_+,r^{2\zeta}dr):\,\ci_{\zeta,\eta}[v]<\infty\}.
\end{align}
Formula \eqref{eq:hardyremainderagaintransformedlimit} holds for all $\alpha\in(0,2\wedge(2\zeta+1))$ and $u\in L^2(\R_+,r^{2\zeta}dr)$; see \cite[Theorem~3.2]{BogdanMerz2024} for $\eta\in[0,(2\zeta+1-\alpha)]$ and Lemma~\ref{gsreprrepulsive} below for $\eta\in(-\alpha,0)$.

In Theorem~\ref{relationhardyformheatkernel} below, we show that $\ci_{\zeta,\eta}$ is nonnegative, closed, symmetric for all $\alpha\in(0,2\wedge(2\zeta+1))$ and $\eta\in(-\alpha,(2\zeta+1-\alpha)/2]$. The corresponding self-adjoint operator defined by a theorem of Friedrichs \cite[Theorem~1.3.1]{Fukushimaetal2011} is denoted by $\cl_{\Phi_{d_\ell}^{(\alpha)}(\eta),\ell}$---as introduced in \eqref{eq:angularmomentumdecomposition}--\eqref{eq:lkappaell} and after \eqref{eq:hardylinformal}---when $\zeta=(d_\ell-1)/2$. It is for this operator that we prove the heat kernel bounds in Theorem~\ref{mainresult}. To this end, we will relate the form $\ci_{\zeta,\eta}$ to the quadratic form canonically associated to $p_{\zeta,\eta}^{(\alpha)}(t,\cdot,\cdot)$, 
\begin{align}
  \label{eq:defezetaeta}
  \begin{split}
    \ce_{\zeta,\eta}[u]
    & := \lim_{t\to0}\frac1t\langle u,u-p_{\zeta,\eta}^{(\alpha)}(t,\cdot,\cdot)u\rangle_{L^2(\R_+,r^{2\zeta}dr)},\quad u\in L^2(\R_+,r^{2\zeta}dr)
  \end{split}
\end{align}\index{$\ce_{\zeta,\eta}$}on its maximal domain
\begin{align}
  \D(\ce_{\zeta,\eta}) := \{v\in L^2(\R_+,r^{2\zeta}dr):\,\ce_{\zeta,\eta}[v]<\infty\}.
\end{align}
By \eqref{eq:defezeta}, $\ce_{\zeta,0}=\ci_{\zeta,0}=\ce_\zeta$, but the case of $\eta\neq 0$ needs to be clarified.
In the following, we prove that $\ce_{\zeta,\eta}$ is a nonnegative, closed symmetric form. Hence, by general theory \cite[Section~1.3]{Fukushimaetal2011}, there is a corresponding nonnegative, self-adjoint operator, called the generator of $p_{\zeta,\eta}^{(\alpha)}$. Secondly, we show that $\ci_{\zeta,\eta}=\ce_{\zeta,\eta}$ on $L^2(\R_+,r^{2\zeta}\,dr)$ which implies that the self-adjoint operators associated with $\ci_{\zeta,\eta}$ and $\ce_{\zeta,\eta}$ coincide. In particular, their heat kernels are $p_{\zeta,\eta}^{(\alpha)}$. This is the roadmap to prove our main result, Theorem~\ref{mainresult}, using the bounds for $p_{\zeta,\eta}^{(\alpha)}$ in Theorem~\ref{mainresultgen}.

\subsection{Connection between the forms $\ce_{d_\ell,\eta}^{(\alpha)}$, $\ci_{\zeta,\eta}$, and $\ce_{\zeta,\eta}$}
\label{s:notation}

In this subsection, we state the equality between the forms $\ci_{\zeta,\eta}$ and $\ce_{\zeta,\eta}$ 
and their connection to the form $\ce_{d_\ell,\eta}^{(\alpha)}$; see Theorem~\ref{relationhardyformheatkernel}. To this end, we first recall, after \cite[Proposition~4.3]{BogdanMerz2024}, the following connection of the forms $\ce^{(\alpha)}$ on $V_{\ell,m}$ and $\ce_{\zeta}$ on $L^2(\R_+,r^{2\zeta}dr)$.

\begin{proposition}
  \label{relationformheatkernel}
  Let $d\in\N$, $\alpha\in(0,2)$, $\ell\in L_d$, $m\in M_\ell$, $\zeta=(d_\ell-1)/2$.
  Then, for all $u\in L^2(\R_+,r^{d_\ell-1}dr)$, we have
  \begin{align}
    \label{eq:linksemigroupsrdrplus}
    \begin{split}
      \langle[u]_{\ell,m},P^{(\alpha)}(t,\cdot,\cdot)[u]_{\ell,m}\rangle_{L^2(\R^d)}
      & = \langle u, p_\zeta^{(\alpha)}(t,\cdot,\cdot)u\rangle_{L^2(\R_+,r^{2\zeta}dr)},
        \quad t>0
    \end{split}
  \end{align}
  and
  \begin{align}
    \label{eq:relationformheatkernel3}
    \ce^{(\alpha)}[[u]_{\ell,m}] = \ce_{\zeta}[u].
  \end{align}
\end{proposition}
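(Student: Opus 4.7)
The plan is to first establish \eqref{eq:linksemigroupsrdrplus} for $\alpha=2$ by exploiting the angular decomposition of the Laplacian in spherical coordinates, then extend to general $\alpha\in(0,2)$ by $\tfrac{\alpha}{2}$-subordination, and finally deduce \eqref{eq:relationformheatkernel3} by passing to the $t\to 0$ limit in the quadratic form definitions.

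\textbf{Case $\alpha=2$.} In spherical coordinates, $-\Delta=-\partial_r^2-\tfrac{d-1}{r}\partial_r-\tfrac{1}{r^2}\Delta_{\bs^{d-1}}$, and $\Delta_{\bs^{d-1}}Y_{\ell,m}=-\ell(\ell+d-2)Y_{\ell,m}$. A direct computation, expanding $\partial_r(u(r)r^\ell)$ and $\partial_r^2(u(r)r^\ell)$ and collecting terms, shows that the terms of order $r^{\ell-2}$ from the radial derivatives exactly cancel the centrifugal contribution $\ell(\ell+d-2)r^{\ell-2}$, leaving
\begin{equation*}
-\Delta[u]_{\ell,m}=[\fl_{(d_\ell-1)/2}u]_{\ell,m},\qquad u\in C_c^\infty(\R_+).
\end{equation*}
Together with the isometry \eqref{eq:isometry}, this means that the map $u\mapsto[u]_{\ell,m}$ intertwines $\fl_{(d_\ell-1)/2}$ on $L^2(\R_+,r^{2\zeta}\,dr)$ with $-\Delta|_{V_{\ell,m}}$. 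By the spectral theorem the intertwining extends to the associated heat semigroups, and since by Definition~\ref{defsemigroupptwise} the integral kernel of $\me{-t\fl_\zeta}$ with respect to $r^{2\zeta}\,dr$ is $p_\zeta^{(2)}(t,r,s)$, pairing with $[u]_{\ell,m}$ yields \eqref{eq:linksemigroupsrdrplus} for $\alpha=2$ on the dense subset $C_c^\infty(\R_+)$, and then for all $u\in L^2(\R_+,r^{2\zeta}\,dr)$ by continuity.

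\textbf{Subordination step.} For $\alpha\in(0,2)$, the identities \eqref{eq:subordination} and \eqref{eq:defpheatalpha}, applied to both $P^{(\alpha)}$ and $p_\zeta^{(\alpha)}$, combined with Fubini--Tonelli (valid since the Gaussian kernels are nonnegative and $\sigma_t^{(\alpha/2)}$ is a probability density), give
\begin{equation*}
\langle[u]_{\ell,m},P^{(\alpha)}(t,\cdot,\cdot)[u]_{\ell,m}\rangle_{L^2(\R^d)}
=\int_0^\infty\langle[u]_{\ell,m},P^{(2)}(\tau,\cdot,\cdot)[u]_{\ell,m}\rangle_{L^2(\R^d)}\,\sigma_t^{(\alpha/2)}(\tau)\,d\tau.
\end{equation*}
Substituting the already-established $\alpha=2$ identity into the right-hand side and applying Fubini once more yields \eqref{eq:linksemigroupsrdrplus} for every $\alpha\in(0,2]$.

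\textbf{From semigroups to forms.} Subtract both sides of \eqref{eq:linksemigroupsrdrplus} from $\|[u]_{\ell,m}\|_{L^2(\R^d)}^2=\|u\|_{L^2(\R_+,r^{2\zeta}dr)}^2$ (using \eqref{eq:isometry}), divide by $t$, and send $t\to 0$. By the definitions \eqref{eq:fractlaplaceheatkernel} and \eqref{eq:defezeta}, the left-hand side tends to $\ce^{(\alpha)}[[u]_{\ell,m}]$ and the right-hand side tends to $\ce_{\zeta}[u]$, giving \eqref{eq:relationformheatkernel3}; finiteness on either side is equivalent, so the maximal form domains match.

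The most delicate step is the algebraic identity verifying $-\Delta[u]_{\ell,m}=[\fl_{(d_\ell-1)/2}u]_{\ell,m}$, which is elementary but requires tracking the exact cancellation of the centrifugal and radial terms; the subordination swap is then a routine application of Tonelli thanks to nonnegativity. No real obstacle arises since the heat semigroup identity is being proved in weak (quadratic-form) sense against a single vector $[u]_{\ell,m}$, so no domain or core issues for unbounded operators need to be addressed beyond the standard density of $C_c^\infty(\R_+)$ in $L^2(\R_+,r^{2\zeta}\,dr)$.
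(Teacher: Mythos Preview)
Your argument is correct. The paper does not give its own proof here but cites \cite[Proposition~4.3]{BogdanMerz2024}; judging from Appendix~A and Lemma~\ref{fourierbessel}, the approach there proceeds via the Fourier--Bessel transform: applying Lemma~\ref{fourierbessel} with $F(k)=\me{-tk^\alpha}$ yields \eqref{eq:linksemigroupsrdrplus} directly once one knows that $\F_{d_\ell}$ diagonalizes $\fl_\zeta$, so that $\me{-t\fl_\zeta^{\alpha/2}}$ becomes multiplication by $\me{-tk^\alpha}$ on the Fourier--Bessel side. Your route---first establishing the $\alpha=2$ intertwining $-\Delta[u]_{\ell,m}=[\fl_\zeta u]_{\ell,m}$ by the spherical-coordinate computation, then passing to $\alpha<2$ via subordination---is a valid and arguably more elementary alternative that avoids the Fourier--Bessel machinery. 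Two minor points: the identification of $p_\zeta^{(2)}(t,r,s)$ as the integral kernel of $\me{-t\fl_\zeta}$ is not contained in Definition~\ref{defsemigroupptwise} (which only gives the explicit formula) but is the classical fact recorded at \eqref{eq:heatkernellzeta}; and in the borderline case $d_\ell=1$ (so $\zeta=0<1/2$, where $\fl_\zeta$ is not essentially self-adjoint on $C_c^\infty(\R_+)$) you are implicitly using that the Krein extension of $\fl_0$ coincides under the intertwining with $-\Delta$ restricted to even functions on $\R$, which is true since both are the Neumann Laplacian on $\R_+$.
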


The following result is essential to prove Theorem~\ref{mainresult} for $\alpha<2$. It identifies the forms $\ci_{\zeta,\eta}$ and $\ce_{\zeta,\eta}$ of \eqref{eq:defizetaeta} and \eqref{eq:defezetaeta} and their corresponding self-adjoint operators and heat kernels.

\begin{theorem}
  \label{relationhardyformheatkernel}
  Let $\zeta\in(-1/2,\infty)$,
  $\alpha\in(0,2\wedge(2\zeta+1))$,
  $\eta\in(-\alpha,\frac{2\zeta+1-\alpha}{2})$, and $h(r)=r^{-\eta}$.
  Then the following statements hold.
  \\
  \textup{(1)}
  If $d\in\N$, $\ell\in L_d$, $\alpha\in(0,2\wedge d_\ell)$, $\eta\in(-\alpha,(d_\ell-\alpha)/2]$, and $\zeta=(d_\ell-1)/2$, then
  \begin{align}
    \label{eq:hardyopformradial}
    \ce_{d_\ell,\eta}^{(\alpha)}[[u]_{\ell,m}] = \ci_{\zeta,\eta}[u], \quad u \in \D(\ci_{\zeta,\eta})
  \end{align}
  holds 
  for all $u\in L^2(\R_+,r^{d_\ell-1}dr)$.
  \\
  \textup{(2)}
  $\D(\ce_{\zeta,\eta})$ is a Hilbert space and $C_c^\infty(\R_+)$ is dense in $\D(\ce_{\zeta,\eta})$ in the norm $\sqrt{\ce_{\zeta,\eta}[\cdot]}+\|\cdot\|_{L^2(\R_+,r^{2\zeta}dr)}$.
  Moreover, $\ci_{\zeta,\eta} = \ce_{\zeta,\eta}$ on $L^2(\R_+, r^{2\zeta}dr)$, their domains and associated self-adjoint operators coincide, and their heat kernel is $p_{\zeta,\eta}^{(\alpha)}$.
\end{theorem}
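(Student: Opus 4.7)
My approach is to deduce Part (1) directly from the ground state representation, then prove Part (2) by establishing closedness of both forms, their equality on the core $C_c^\infty(\R_+)$ via Duhamel, and extending by density.

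\emph{Part (1).} For a section $[u]_{\ell,m}$ with $\zeta=(d_\ell-1)/2$, Proposition~\ref{relationformheatkernel} gives $\ce^{(\alpha)}[[u]_{\ell,m}]=\ce_\zeta[u]$, and a polar-coordinates computation using $\|Y_{\ell,m}\|_{L^2(\bs^{d-1})}=1$ yields $\int_{\R^d}|[u]_{\ell,m}(x)|^2|x|^{-\alpha}\,dx=\int_0^\infty|u(r)|^2 r^{2\zeta-\alpha}\,dr$. With the identification $\Psi_\zeta(\eta)=\Phi_{d_\ell}^{(\alpha)}(\eta)$, substitution into the ground state representation \eqref{eq:hardyremainderagaintransformedlimit}---which holds for $\eta\in[0,(2\zeta+1-\alpha)/2]$ by \cite{BogdanMerz2024} and for $\eta\in(-\alpha,0)$ by Lemma~\ref{gsreprrepulsive}---gives exactly \eqref{eq:hardyopformradial}.

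\emph{Closedness and Hilbert structure for Part (2).} Symmetry and nonnegativity of $\ci_{\zeta,\eta}$ are immediate from \eqref{eq:defizetaeta}. For closedness, a graph-norm Cauchy sequence $(u_n)$ converges in $L^2(\R_+,r^{2\zeta}dr)$ to some $u$; passing to a pointwise a.e.\ subsequence and applying Fatou's lemma to the nonnegative integrand in \eqref{eq:defizetaeta} yields $\ci_{\zeta,\eta}[u]<\infty$ and $\ci_{\zeta,\eta}[u-u_n]\to 0$. That $(\D(\ce_{\zeta,\eta}),\sqrt{\ce_{\zeta,\eta}[\cdot]}+\|\cdot\|_{L^2})$ is a Hilbert space is the standard fact that the quadratic form of a strongly continuous contraction semigroup is closed, applied to $p_{\zeta,\eta}^{(\alpha)}$ via Theorem~\ref{propertiesschrodheatkernel}(6).

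\emph{Equality on $C_c^\infty(\R_+)$.} For $u\in C_c^\infty(\R_+)$, integrating the first Duhamel identity \eqref{eq:duhamelclassictransformed} twice against $u$ and using symmetry of $p_\zeta^{(\alpha)}$ gives
\begin{align*}
  \frac{1}{t}\langle u, u-p_{\zeta,\eta}^{(\alpha)}(t,\cdot,\cdot)u\rangle
  &= \frac{1}{t}\langle u, u-p_\zeta^{(\alpha)}(t,\cdot,\cdot)u\rangle \\
  &\quad - \frac{1}{t}\int_0^t d\tau\,\langle p_\zeta^{(\alpha)}(\tau,\cdot,\cdot)u,\; q\,p_{\zeta,\eta}^{(\alpha)}(t-\tau,\cdot,\cdot)u\rangle.
\end{align*}
As $t\downarrow 0$, the first term tends to $\ce_\zeta[u]$ by \eqref{eq:defezeta}, while strong continuity of both semigroups on $L^2(\R_+,r^{2\zeta}dr)$ combined with the fact that $qu=\Psi_\zeta(\eta)r^{-\alpha}u$ belongs to $L^2(\R_+,r^{2\zeta}dr)$ (since $u$ is smooth with compact support away from $0$) shows the second term tends to $\Psi_\zeta(\eta)\int_0^\infty|u|^2 r^{2\zeta-\alpha}\,dr$. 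The ground state representation then identifies the limit with $\ci_{\zeta,\eta}[u]$.

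\emph{Density and conclusion.} The main obstacle is showing that $C_c^\infty(\R_+)$ is a form core for $\ce_{\zeta,\eta}$. I would proceed by truncation-and-mollification: for $u\in\D(\ce_{\zeta,\eta})$ approximate by $u_n=\chi_n u$ with $\chi_n\in C_c^\infty(\R_+)$, $0\leq\chi_n\leq 1$, exhausting $\R_+$, and then mollify. Cutoff errors are controlled via the explicit L\'evy-kernel bound $\nu_\zeta(r,s)\lesssim (r+s)^{-2\zeta}|r-s|^{-1-\alpha}$ from \eqref{eq:defnuell1}. The delicate region is $r\to 0$ in the attractive case $\eta>0$, where $h(r)=r^{-\eta}$ is singular; I would handle this by passing to the Doob $h$-transform $v=u/h$, under which $\ci_{\zeta,\eta}$ becomes a symmetric jump-type Dirichlet form on $L^2(\R_+,h^2r^{2\zeta}dr)$ for which density of $C_c^\infty(\R_+)$ is classical (cf.\ \cite{Fukushimaetal2011}). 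Once the core property is established, closedness of both forms extends the pointwise equality from $C_c^\infty(\R_+)$ to $\D(\ce_{\zeta,\eta})=\D(\ci_{\zeta,\eta})$ with $\ce_{\zeta,\eta}=\ci_{\zeta,\eta}$; uniqueness of the Friedrichs generator then identifies the two self-adjoint operators and their heat kernels, pinning the latter down as $p_{\zeta,\eta}^{(\alpha)}$.
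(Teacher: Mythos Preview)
Your Part (1) and the closedness arguments in Part (2) match the paper's proof exactly. The overall architecture of Part (2)---identify the forms on a core, prove density, extend by closedness---is also the paper's. But two steps in your execution have genuine gaps.

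\textbf{The Duhamel limit.} You claim that strong continuity and $qu\in L^2$ suffice to pass to the limit in
\[
\frac{1}{t}\int_0^t d\tau\,\langle p_\zeta^{(\alpha)}(\tau)u,\; q\,p_{\zeta,\eta}^{(\alpha)}(t-\tau)u\rangle.
\]
The issue is that $q(r)=\Psi_\zeta(\eta)r^{-\alpha}$ is unbounded at the origin, and neither $q\,p_{\zeta,\eta}^{(\alpha)}(t-\tau)u$ nor $q\,p_\zeta^{(\alpha)}(\tau)u$ need lie in $L^2(\R_+,r^{2\zeta}dr)$: for small $r$ and $u$ supported in $[a,b]$, one has $(p_\zeta^{(\alpha)}(\tau)u)(r)\sim \tau$, so $q\,p_\zeta^{(\alpha)}(\tau)u$ behaves like $r^{-\alpha}$, and $\int_0^1 r^{2\zeta-2\alpha}\,dr$ diverges whenever $\alpha\geq(2\zeta+1)/2$. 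Thus the pairing cannot be split as an $L^2$ inner product in general, and strong continuity alone does not give the limit. The paper (Lemma~\ref{boglem51}) circumvents this by first proving $\ce_{\zeta,\eta}\geq\ci_{\zeta,\eta}$ via Fatou, and then for the reverse inequality replacing $q$ by the bounded $q^\epsilon=q\wedge\epsilon^{-1}$, for which the analogous limit is justified, and letting $\epsilon\to0$. For $\eta<0$ it uses instead the pointwise domination $p_{\zeta,\eta}^{(\alpha)}\leq p_\zeta^{(\alpha)}\lesssim t\nu_\zeta$ together with the $h$-invariance to run a direct Fatou/dominated-convergence argument (Theorem~\ref{jakprop318}).

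\textbf{Density.} You set out to prove that $C_c^\infty(\R_+)$ is a core for $\ce_{\zeta,\eta}$, but then switch to the $h$-transformed picture of $\ci_{\zeta,\eta}$. These are not interchangeable until the theorem is proved: $\ce_{\zeta,\eta}$ is only defined through the semigroup and has no a priori integral representation, so your proposed control of cutoff errors ``via the L\'evy-kernel bound for $\nu_\zeta$'' is not available for $\ce_{\zeta,\eta}$. Moreover, even granting a core for $\ce_{\zeta,\eta}$, you only obtain $\D(\ce_{\zeta,\eta})\subseteq\D(\ci_{\zeta,\eta})$; the reverse inclusion still needs density in $\D(\ci_{\zeta,\eta})$. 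The paper works the other way around (Proposition~\ref{smoothdensehardy}): it proves density directly for $\ci_{\zeta,\eta}$, whose explicit integral form permits cutoff and mollification estimates, and combines this with the inclusion $\D(\ce_{\zeta,\eta})\subseteq\D(\ci_{\zeta,\eta})$ already obtained from Lemma~\ref{boglem51}. Finally, the appeal to ``classical'' density for jump-type Dirichlet forms is too quick here: the kernel $\nu_\zeta(r,s)h(r)h(s)$ is not translation-invariant, so the paper uses a dilation-based mollification $f_\epsilon(r)=\int\rho_\epsilon(s)f(r/s)\,ds$ (Lemma~\ref{boglem52}) together with a Schilling--Uemura truncation and Banach--Saks, none of which is off-the-shelf.
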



\begin{proof}
  (1):
  Formula \eqref{eq:hardyopformradial} follows from Formulae \eqref{eq:hardyremainderagaintransformedlimit} and \eqref{eq:relationformheatkernel3}, and the equality $\langle[u]_{\ell,m},|x|^{-\alpha}[u]_{\ell,m}\rangle_{L^2(\R^d)}=\langle u,r^{-\alpha}u\rangle_{L^2(\R_+,r^{2\zeta}dr)}$.
  \\
  (2):
  These claims follow from Theorems~\ref{bogthm51} and~\ref{jakprop318} below when $\eta\in[0,(2\zeta+1-\alpha)/2]$ and $\eta\in(-\alpha,0]$, respectively.
\end{proof}

As outlined at the end of Section~\ref{ss:angularmomenta}, we can associate a self-adjoint operator to the nonnegative, symmetric, closed form $\ce_{d_\ell,\eta}^{(\alpha)}$, which we denote by $((-\Delta)^{\alpha/2}-\Phi_{d_\ell}^{(\alpha)}(\eta)/|x|^\alpha)|_{V_\ell}$ in $V_\ell$. The following formula for its heat kernel---defined by functional calculus---integrated against spherical harmonics provides a different perspective on Theorem~\ref{mainresult}.

\begin{corollary}
  \label{heatkernelintegratedsphericalharmonic}
  Let $d\in\N$, $\ell\in L_d$, $\alpha\in(0,2\wedge d_\ell)$, and $\eta\in(-\alpha,(d_\ell-\alpha)/2]$. Then, for all $r,s,t>0$,
  \begin{align}
    \label{eq:linksemigroupsrdrpluspointwisekappa}
    \begin{split}
    & (rs)^{-\ell} \!\!\!\!\!\iint\limits_{\bs^{d-1}\times\bs^{d-1}} \overline{Y_{\ell,m}(\omega_x)} Y_{\ell,m}(\omega_y) \exp\left[-t\left(|D|^\alpha-\frac{\Phi_{d_\ell}^{(\alpha)}(\eta)}{|x|^\alpha}\right)\Big|_{V_\ell}\right](r\omega_x,s\omega_y)\,d\omega_x\,d\omega_y \\
    & \quad = p_{(d_\ell-1)/2,\eta}^{(\alpha)}(t,r,s).
    \end{split}
  \end{align}
\end{corollary}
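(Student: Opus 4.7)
The plan is to identify the integral kernel of $(|D|^\alpha - \Phi_{d_\ell}^{(\alpha)}(\eta)/|x|^\alpha)|_{V_\ell}$ in spherical-harmonic-adapted coordinates and to read off \eqref{eq:linksemigroupsrdrpluspointwisekappa} from the orthonormality of $\{Y_{\ell,m'}\}_{m'\in M_\ell}$ in $L^2(\bs^{d-1})$.

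Set $\zeta := (d_\ell-1)/2$ and let $H_\ell$ denote the operator $(|D|^\alpha - \Phi_{d_\ell}^{(\alpha)}(\eta)/|x|^\alpha)|_{V_\ell}$. Combining Theorem~\ref{relationhardyformheatkernel}~(1)--(2) with the isometry $[\,\cdot\,]_{\ell,m} : L^2(\R_+, r^{d_\ell-1}\,dr) \to V_{\ell,m}$ from~\eqref{eq:isometry} shows that, for each fixed $m \in M_\ell$, this map intertwines $\cl_{\Phi_{d_\ell}^{(\alpha)}(\eta),\ell}$ with the restriction $H_\ell|_{V_{\ell,m}}$, so that
\begin{equation*}
  \exp(-t H_\ell)\,[v]_{\ell,m} \;=\; \bigl[\exp\bigl(-t\, \cl_{\Phi_{d_\ell}^{(\alpha)}(\eta),\ell}\bigr)\, v\bigr]_{\ell,m}
\end{equation*}
for every $v \in L^2(\R_+, r^{d_\ell-1}\,dr)$, where the right-hand side is given by integration against the jointly continuous kernel $p_{\zeta,\eta}^{(\alpha)}(t,r,s)$ with respect to $s^{d_\ell-1}\,ds$.

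Since $V_\ell = \bigoplus_{m'\in M_\ell} V_{\ell,m'}$ by~\eqref{eq:directsumvell}, a short bookkeeping calculation—writing $y = s\omega_y$, $dy = s^{d-1}\,ds\,d\omega_y$, and $[v]_{\ell,m'}(y) = v(s)s^\ell Y_{\ell,m'}(\omega_y)$—identifies the integral kernel of $\exp(-tH_\ell)$, extended by zero on $V_\ell^\perp$, as
\begin{equation*}
  K_t(r\omega_x, s\omega_y) \;=\; (rs)^\ell\, p_{\zeta,\eta}^{(\alpha)}(t,r,s) \!\!\sum_{m'\in M_\ell}\!\! Y_{\ell,m'}(\omega_x)\,\overline{Y_{\ell,m'}(\omega_y)},
\end{equation*}
which is jointly continuous on $(\R^d\setminus\{0\})\times(\R^d\setminus\{0\})$ thanks to Theorem~\ref{mainresultgen} and the continuity of spherical harmonics.

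Finally, substituting this expression into the left-hand side of~\eqref{eq:linksemigroupsrdrpluspointwisekappa} and invoking $\int_{\bs^{d-1}} \overline{Y_{\ell,m}(\omega)}\,Y_{\ell,m'}(\omega)\,d\omega = \delta_{m,m'}$ collapses the sum over $m'$ to the single term $m' = m$, leaving precisely $p_{\zeta,\eta}^{(\alpha)}(t,r,s)$. The proof is almost entirely an exercise in bookkeeping: the genuine analytic content lies in Theorem~\ref{relationhardyformheatkernel}, and the factor $(rs)^{-\ell}$ in \eqref{eq:linksemigroupsrdrpluspointwisekappa} is exactly what is needed to cancel both the weight $r^\ell$ appearing in $[v]_{\ell,m}$ and the discrepancy between the measures $r^{d-1}\,dr$ and $r^{d_\ell-1}\,dr$. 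The only subtle point—continuity of $K_t$ on $(\R^d\setminus\{0\})^2$ justifying the pointwise statement rather than an almost-everywhere one—is delivered directly by Theorem~\ref{mainresultgen}.
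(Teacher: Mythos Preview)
Your proof is correct and follows essentially the same strategy as the paper's: both arguments use Theorem~\ref{relationhardyformheatkernel} to identify the radial heat kernel with $p_{\zeta,\eta}^{(\alpha)}$, exploit the intertwining between $H_\ell|_{V_{\ell,m}}$ and $\cl_{\Phi_{d_\ell}^{(\alpha)}(\eta),\ell}$, and then read off \eqref{eq:linksemigroupsrdrpluspointwisekappa} via orthonormality of the spherical harmonics. The only notational difference is that the paper works with the projections $\Pi_{\ell,m}$ and the tensor decomposition $U_\ell^* e^{-tA_r} U_\ell \otimes |Y_{\ell,m}\rangle\langle Y_{\ell,m}|$, while you write out the full kernel $K_t$ as a sum over $m'\in M_\ell$; the substance is identical.
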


\begin{proof}
  This follows from Theorem~\ref{relationhardyformheatkernel}, once we show that the left-hand side of \eqref{eq:linksemigroupsrdrpluspointwisekappa} equals the heat kernel of the self-adjoint operator associated with $\ci_{(d_\ell-1)/2,\eta}$. We denote the self-adjoint operators associated with $\ci_{(d_\ell-1)/2,\eta}$ and $\ce_{d_\ell,\eta}^{(\alpha)}$ by $A_r\!=\!\cl_{\Phi_{d_\ell}^{(\alpha)}(\eta),\ell}$ in $L^2(\R_+,r^{2\zeta}dr)$ and $A=\left(|D|^\alpha-\Phi_{d_\ell}^{(\alpha)}(\eta)/|x|^\alpha\right)|_{V_\ell}$ in $V_\ell$, respectively\footnote{We choose this temporary notation only for a simpler presentation.}. For $\ell\in L_d$ and $m\in M_\ell$, we denote by
  $\Pi_{\ell,m}:=\one_{L^2(\R_+,r^{d-1}dr)}\otimes |Y_{\ell,m}\rangle\langle Y_{\ell,m}|$ the orthogonal projection onto $V_{\ell,m}$. Then,
  \begin{align}
    \Pi_{\ell,m} A \Pi_{\ell,m}
    = U_\ell^* A_rU_\ell \otimes |Y_{\ell,m}\rangle\langle Y_{\ell,m}|, \quad m\in M_\ell,
  \end{align}
  with the unitary operator $U_\ell:L^2(\R_+,r^{d-1}dr)\to L^2(\R_+,r^{d_\ell-1}dr)$ defined by $(U_\ell u)(r)=r^{-\ell}u(r)$. By the spherical symmetry of $A$, we have
  \begin{align}
    \Pi_{\ell,m} \me{-tA} \Pi_{\ell,m}
    = \exp\left(-t \Pi_{\ell,m} A \Pi_{\ell,m}\right)
    = U_\ell^* \me{-tA_r}U_\ell \otimes |Y_{\ell,m}\rangle\langle Y_{\ell,m}|,
    \quad m\in M_\ell.
  \end{align}
  Integrating this identity against $\overline{Y_{\ell,m}(\omega_x)} Y_{\ell,m}(\omega_y)\,d\omega_x\,d\omega_y$ yields
  \begin{align}
    \me{-tA_r}(r,s)
    = (rs)^{-\ell} \iint\limits_{\bs^{d-1}\times\bs^{d-1}} \overline{Y_{\ell,m}(\omega_x)} Y_{\ell,m}(\omega_y) \me{-tA}(r\omega_x,s\omega_y)\,d\omega_x\,d\omega_y,
  \end{align}
  as claimed.
\end{proof}

\subsection{Equality of $\ci_{\zeta,\eta}$ and $\ce_{\zeta,\eta}$ for $\eta\in(0,(2\zeta+1-\alpha)/2]$}
\label{s:connectiongenerators}

In this subsection, we 
fix $\zeta\in(-1/2,\infty)$ and $\alpha\in(0,2\wedge(2\zeta+1))$, and prove Theorem~\ref{relationhardyformheatkernel}(2) for all $\eta\in(0,(2\zeta+1-\alpha)/2]$.
Thus, our goal is to show  that $\D(\ce_{\zeta,\eta})=\D(\ci_{\zeta,\eta})$ and  $\ce_{\zeta,\eta}=\ci_{\zeta,\eta}$.

Here is the main result of this subsection.

\begin{theorem}
  \label{bogthm51}
  Let $\zeta\in(-1/2,\infty)$, $\alpha\in(0,2\wedge(2\zeta+1))$, and $\eta\in[0,(2\zeta+1-\alpha)/2]$. Then, $(\D(\ce_{\zeta,\eta}),\sqrt{\ce_{\zeta,\eta}[\cdot]}+\|\cdot\|_{L^2(\R_+,r^{2\zeta}dr)})$ is a Hilbert space.
  Moreover, $C_c^\infty(\R_+)$ is dense in $\D(\ce_{\zeta,\eta})$ with the norm $\sqrt{\ce_{\zeta,\eta}[\cdot]}+\|\cdot\|_{L^2(\R_+,r^{2\zeta}dr)}$.
  Furthermore, $\ce_{\zeta,\eta}=\ci_{\zeta,\eta}$ on $L^2(\R_+,r^{2\zeta}dr)$.
\end{theorem}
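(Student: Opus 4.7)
The plan is to prove the three assertions---Hilbert-space structure, the identity $\ce_{\zeta,\eta} = \ci_{\zeta,\eta}$, and density of $C_c^\infty(\R_+)$---in the stated order. The Hilbert-space structure is immediate from general semigroup theory: by Theorem~\ref{propertiesschrodheatkernel}(1),(6), $\{p_{\zeta,\eta}^{(\alpha)}(t,\cdot,\cdot)\}_{t\geq 0}$ is a symmetric strongly continuous contraction semigroup on $L^2(\R_+, r^{2\zeta}dr)$; symmetry is manifest from the perturbation series since $p_\zeta^{(\alpha)}$ is symmetric and $q$ is a multiplication operator. Hence $\ce_{\zeta,\eta}$ is closed on its maximal domain (e.g., by \cite[Lemma~1.3.4]{Fukushimaetal2011}), and $(\D(\ce_{\zeta,\eta}),\sqrt{\ce_{\zeta,\eta}[\cdot]+\|\cdot\|^2_{L^2(\R_+,r^{2\zeta}dr)}})$ is a Hilbert space.

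For the form identity, I would apply the Duhamel formula of Theorem~\ref{propertiesschrodheatkernel}(2) together with self-adjointness of $p_\zeta^{(\alpha)}(\tau)$ to obtain, for $u \in L^2(\R_+, r^{2\zeta}dr)$ and $t > 0$,
\begin{equation*}
\tfrac{1}{t}\langle u, (I - p_{\zeta,\eta}^{(\alpha)}(t,\cdot,\cdot))u\rangle = \tfrac{1}{t}\langle u, (I - p_\zeta^{(\alpha)}(t,\cdot,\cdot))u\rangle - \tfrac{1}{t}\int_0^t \langle p_\zeta^{(\alpha)}(\tau,\cdot,\cdot)u,\, q\, p_{\zeta,\eta}^{(\alpha)}(t-\tau,\cdot,\cdot)u\rangle\,d\tau.
\end{equation*}
As $t\searrow 0$, the left-hand side and the first right-hand term increase monotonically to $\ce_{\zeta,\eta}[u]$ and $\ce_\zeta[u]$, respectively. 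For the remaining integral, since $\eta\in[0,(2\zeta+1-\alpha)/2]$ gives $q\geq 0$ and both semigroups are positivity-preserving and strongly continuous on $L^2$, a Fatou/monotone convergence argument identifies its limit as $\Psi_\zeta(\eta)\int_{\R_+}|u(r)|^2 r^{-\alpha}r^{2\zeta}dr\in[0,\infty]$. Combining this with the ground state representation \eqref{eq:hardyremainderagaintransformedlimit} yields $\ce_{\zeta,\eta}[u] = \ci_{\zeta,\eta}[u]$ for all $u\in L^2(\R_+,r^{2\zeta}dr)$, with the convention $\infty=\infty$.

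For density of $C_c^\infty(\R_+)$, I would use the ground state transform $u\mapsto v:=u/h$, a unitary isomorphism $L^2(\R_+,r^{2\zeta}dr) \to L^2(\R_+,r^{2\zeta-2\eta}dr)$ that carries $\ci_{\zeta,\eta}[u]$ into the symmetric jump-type Dirichlet form $\tilde\ce[v] := \tfrac{1}{2}\iint_{\R_+\times\R_+} r^\eta s^\eta\,\nu_\zeta(r,s)\,|v(r)-v(s)|^2\,r^{2\zeta-2\eta}s^{2\zeta-2\eta}\,dr\,ds$. A cutoff-and-mollify argument on $v$---cutting off on $[1/n,n]$ via smooth $\phi_n$ and then mollifying, in the spirit of the core identifications for free subordinated Bessel-type forms in \cite{BogdanMerz2024}---shows that $C_c^\infty(\R_+)$ is a core for $\tilde\ce$, using the Leibniz-type bound $|\phi_n(r)v(r)-\phi_n(s)v(s)|^2 \leq 2\phi_n(r)^2|v(r)-v(s)|^2 + 2|v(s)|^2(\phi_n(r)-\phi_n(s))^2$ and the two-sided asymptotics of $\nu_\zeta$ in \eqref{eq:defnuell1}. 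Since $v\mapsto hv$ preserves $C_c^\infty(\R_+)$, this yields density in $\D(\ci_{\zeta,\eta})=\D(\ce_{\zeta,\eta})$. The main technical obstacle is the Duhamel limit step, where the weighted Hardy integral may be infinite a priori, so dominated convergence is unavailable; the resolution is to exploit positivity of $q$ and of the semigroups to pass the limit in $[0,\infty]$, with the ground state representation ensuring consistency when both sides are infinite. A secondary challenge in the density step is controlling $\tilde\nu(r,s):=r^\eta s^\eta\nu_\zeta(r,s)$ in the off-diagonal regime $r\ll s$, where it is strictly smaller than the comparable free kernel $\nu_{\zeta-\eta}(r,s)$, so the cutoff residues near $r=0$ must be handled with care using the scale of $\phi_n$ adapted to the singularity of $h$.
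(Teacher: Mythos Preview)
The Hilbert-space claim and the density sketch are broadly in line with the paper's approach (general semigroup theory for closedness; a ground-state transform combined with multiplicative cutoff and mollification for the core property, as in Lemmas~\ref{approximation}--\ref{besseldomainalpha} and Proposition~\ref{smoothdensehardy}).

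The gap is in your identification of the Duhamel term. You claim that
\[
\frac{1}{t}\int_0^t \big\langle p_\zeta^{(\alpha)}(\tau,\cdot,\cdot)u,\ q\, p_{\zeta,\eta}^{(\alpha)}(t-\tau,\cdot,\cdot)u\big\rangle\,d\tau
\ \longrightarrow\ \int_{\R_+}|u(r)|^2\,q(r)\,r^{2\zeta}\,dr
\]
by ``Fatou/monotone convergence''. Strong $L^2$-continuity of the semigroups is not enough here, because $q(z)=\Psi_\zeta(\eta)z^{-\alpha}$ is unbounded; positivity gives at best a one-sided Fatou inequality for nonnegative $u$, not the full limit. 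Worse, for generic $u\in L^2(\R_+,r^{2\zeta}dr)$ one may have $\ce_\zeta[u]=\infty=\int|u|^2q$, and then your displayed identity degenerates to $\ce_{\zeta,\eta}[u]=\infty-\infty$. Appealing to the ground state representation \eqref{eq:hardyremainderagaintransformedlimit} does not rescue this, since that identity is the equality of the \emph{same} two undefined differences. Your proposed fix (``exploit positivity to pass the limit in $[0,\infty]$, with the ground state representation ensuring consistency when both sides are infinite'') is circular.

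The paper avoids this by never attempting the Duhamel limit with the full Hardy potential. For $\ce_{\zeta,\eta}\geq\ci_{\zeta,\eta}$ it uses the $h$-invariance $p_{\zeta,\eta}^{(\alpha)}(t)h=h$ (Theorem~\ref{propertiesschrodheatkernel}(4)) to rewrite the pre-limit form for $\ce_{\zeta,\eta}$ as a nonnegative double integral of $|u(r)/h(r)-u(s)/h(s)|^2$ against $\tfrac1t p_{\zeta,\eta}^{(\alpha)}(t,r,s)h(r)h(s)$, then uses $p_{\zeta,\eta}^{(\alpha)}\geq p_\zeta^{(\alpha)}$ and Fatou. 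For the reverse inequality (only on $\D(\ce_\zeta)$, where all quantities are finite) it truncates $q$ to $q^\epsilon:=q\wedge\epsilon^{-1}$, builds the bounded-potential perturbation $\tilde p_\zeta^{(\alpha,\epsilon)}$, shows that the Duhamel terms of order $\geq2$ are $O(t^2)$ so that the first-order limit is the elementary $\int|u|^2 q^\epsilon$, compares via $p_{\zeta,\eta}^{(\alpha)}\geq\tilde p_\zeta^{(\alpha,\epsilon)}$, and lets $\epsilon\to0$ by monotone convergence. This truncation is precisely the device your argument is missing. Equality of the forms on all of $L^2$ then follows \emph{a posteriori} from equality on $C_c^\infty(\R_+)\subseteq\D(\ce_\zeta)$, completeness of both form domains, the inclusion $\D(\ce_{\zeta,\eta})\subseteq\D(\ci_{\zeta,\eta})$, and density of $C_c^\infty(\R_+)$ in $\D(\ci_{\zeta,\eta})$.
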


\subsubsection{Equality of forms on smaller domains}

We first show the equality of $\ce_{\zeta,\eta}$ and $\ci_{\zeta,\eta}$ on $\D(\ce_\zeta)$, see \eqref{eq:maximaldomainezeta}.

\begin{lemma}[{\cite[Lemma~5.1]{Bogdanetal2019}}]
  \label{boglem51}
  Let $\zeta\in(-1/2,\infty)$, $\alpha\in(0,2\wedge(2\zeta+1))$, and $\eta\in(0,(2\zeta+1-\alpha)/2]$. Then, $\D(\ce_\zeta)\subseteq\D(\ce_{\zeta,\eta})\subseteq\D(\ci_{\zeta,\eta})$ and
  \begin{align}
    \label{eq:boglem51}
    \ce_{\zeta,\eta}[v] = \ci_{\zeta,\eta}[v] = \ce_\zeta[v] - \Psi_\zeta(\eta)\int_0^\infty dr\, r^{2\zeta-\alpha} |v(r)|^2, \quad v\in\D(\ce_\zeta).
  \end{align}
\end{lemma}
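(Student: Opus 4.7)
The plan is to combine the ground state representation \eqref{eq:hardyremainderagaintransformedlimit} with a Duhamel-based computation of the perturbed Dirichlet form. Throughout, it will be convenient to abbreviate $\|\cdot\|_H:=\|\cdot\|_{L^2(\R_+,r^{2\zeta-\alpha}dr)}$, so that Hardy's inequality from \cite{BogdanMerz2024} reads $\|v\|_H^2\le(\kappa_{\rm c}^{(\alpha)}(d_\ell))^{-1}\ce_\zeta[v]$.

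For the rightmost equality in \eqref{eq:boglem51}, the ground state representation \eqref{eq:hardyremainderagaintransformedlimit} is valid on all of $L^2(\R_+,r^{2\zeta}dr)$ by \cite[Theorem~3.2]{BogdanMerz2024}. For $v\in\D(\ce_\zeta)$, Hardy's inequality forces $\|v\|_H^2<\infty$ (note that $\Psi_\zeta(\eta)\le\kappa_{\rm c}^{(\alpha)}(d_\ell)$ is automatic in our parameter range), so rearranging \eqref{eq:hardyremainderagaintransformedlimit} yields $\ci_{\zeta,\eta}[v]=\ce_\zeta[v]-\Psi_\zeta(\eta)\|v\|_H^2<\infty$. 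This gives simultaneously the inclusion $\D(\ce_\zeta)\subseteq\D(\ci_{\zeta,\eta})$ and the rightmost identity in \eqref{eq:boglem51}.

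For the middle equality, I would apply Duhamel's formula \eqref{eq:duhamelclassictransformed} inside the definition \eqref{eq:defezetaeta}. Pairing \eqref{eq:duhamelclassictransformed} against $v\in\D(\ce_\zeta)$ in $L^2(\R_+,r^{2\zeta}dr)$, using symmetry of both kernels together with Fubini, and dividing by $t$ gives
\begin{align*}
\frac{1}{t}\langle v,v-p_{\zeta,\eta}^{(\alpha)}(t)v\rangle
=\frac{1}{t}\langle v,v-p_\zeta^{(\alpha)}(t)v\rangle
-\frac{1}{t}\int_0^t\!d\tau\,\langle p_\zeta^{(\alpha)}(\tau)v,\,q\,p_{\zeta,\eta}^{(\alpha)}(t-\tau)v\rangle.
\end{align*}
As $t\downarrow 0$, the first term on the right converges to $\ce_\zeta[v]$ by the very definition \eqref{eq:defezeta} of $\ce_\zeta$. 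Modulo the already-established rightmost identity, the middle equality and the inclusion $\D(\ce_\zeta)\subseteq\D(\ce_{\zeta,\eta})$ will follow once the $\tau$-average on the right is shown to tend to $\langle v,qv\rangle=\Psi_\zeta(\eta)\|v\|_H^2$.

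The main obstacle is justifying this last convergence. Splitting the integrand as $\langle p_\zeta^{(\alpha)}(\tau)v-v,\,q p_{\zeta,\eta}^{(\alpha)}(t-\tau)v\rangle+\langle v,\,q(p_{\zeta,\eta}^{(\alpha)}(t-\tau)v-v)\rangle$ and applying Cauchy--Schwarz in $L^2(\R_+,qr^{2\zeta}dr)$ produces a bound by $\Psi_\zeta(\eta)\|v\|_H\bigl(\|p_\zeta^{(\alpha)}(\tau)v-v\|_H+\|p_{\zeta,\eta}^{(\alpha)}(t-\tau)v-v\|_H\bigr)$, uniform in $\tau\in[0,t]$. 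Hardy's inequality dominates $\|\cdot\|_H$ by $\ce_\zeta^{1/2}[\cdot]$, so the first term vanishes as $\tau\to 0$ by the standard strong continuity of symmetric Dirichlet-form semigroups in their form norm. The delicate piece is the second term, for which one needs strong continuity of $\{p_{\zeta,\eta}^{(\alpha)}(s)\}$ in the same form norm: in the subcritical range $\eta<(2\zeta+1-\alpha)/2$ this can be extracted from the perturbation series \eqref{eq:feynmankactransformed} via a telescoping $L^2$-estimate using the pointwise bound $p_\zeta^{(\alpha)}\le p_{\zeta,\eta}^{(\alpha)}$ from Theorem~\ref{propertiesschrodheatkernel}(5), while the critical case $\eta=(2\zeta+1-\alpha)/2$ is recovered by approximating from below through subcritical parameters and passing to the limit via monotone convergence in \eqref{eq:feynmankactransformed} and Fatou's lemma applied to \eqref{eq:defezetaeta} and \eqref{eq:defizetaeta}.
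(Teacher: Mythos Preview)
Your argument has two genuine gaps.

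\textbf{The inclusion $\D(\ce_{\zeta,\eta})\subseteq\D(\ci_{\zeta,\eta})$ is never addressed.} You only establish $\D(\ce_\zeta)\subseteq\D(\ci_{\zeta,\eta})$, but the lemma also claims $\D(\ce_{\zeta,\eta})\subseteq\D(\ci_{\zeta,\eta})$ for \emph{all} $v\in\D(\ce_{\zeta,\eta})$, not just those in $\D(\ce_\zeta)$. The paper obtains this via the global inequality $\ce_{\zeta,\eta}[v]\geq\ci_{\zeta,\eta}[v]$ on $L^2(\R_+,r^{2\zeta}dr)$: using the invariance $p_{\zeta,\eta}^{(\alpha)}(t)h=h$ one rewrites $\frac1t\langle v,(1-p_{\zeta,\eta}^{(\alpha)}(t))v\rangle$ as a symmetrized double integral with weight $p_{\zeta,\eta}^{(\alpha)}(t,r,s)h(r)h(s)$, drops to $p_\zeta^{(\alpha)}$ via Theorem~\ref{propertiesschrodheatkernel}(5), and lets $t\to0$ with Fatou.

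\textbf{The Duhamel step relies on unproved control of $p_{\zeta,\eta}^{(\alpha)}$ in the $\|\cdot\|_H$-norm.} Your Cauchy--Schwarz bound tacitly uses both a uniform estimate $\|p_{\zeta,\eta}^{(\alpha)}(s)v\|_H\lesssim\|v\|_H$ and the strong continuity $\|p_{\zeta,\eta}^{(\alpha)}(s)v-v\|_H\to0$. Via Hardy the latter would follow from $\ce_\zeta[p_{\zeta,\eta}^{(\alpha)}(s)v-v]\to0$, but $p_{\zeta,\eta}^{(\alpha)}$ is \emph{not} the semigroup associated to $\ce_\zeta$; it is the semigroup of $\ce_{\zeta,\eta}$, whose form norm you do not yet know to be comparable to $\ce_\zeta$---that comparison is precisely the content of the lemma. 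The ``telescoping $L^2$-estimate'' you allude to is not a recognizable substitute, and the pointwise inequality $p_\zeta^{(\alpha)}\le p_{\zeta,\eta}^{(\alpha)}$ goes in the wrong direction for bounding $\|p_{\zeta,\eta}^{(\alpha)}(s)v\|_H$ from above.

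The paper sidesteps this entirely by truncating the potential: with $q^\epsilon:=q\wedge\epsilon^{-1}$ bounded, the perturbation series for the resulting semigroup $\tilde p_\zeta^{(\alpha,\epsilon)}$ is dominated termwise by $p_\zeta^{(\alpha)}(t,r,s)(t/\epsilon)^n/n!$, so one computes directly that $\lim_{t\to0}\frac1t\langle v,(1-\tilde p_\zeta^{(\alpha,\epsilon)}(t))v\rangle=\ce_\zeta[v]-\int q^\epsilon v^2\,r^{2\zeta}dr$ using only the $L^2$-strong continuity of $p_\zeta^{(\alpha)}$. Since $p_{\zeta,\eta}^{(\alpha)}\ge\tilde p_\zeta^{(\alpha,\epsilon)}$ on nonnegative functions, one gets $\ce_{\zeta,\eta}[v]\le\ce_\zeta[v]-\int q^\epsilon v^2\,r^{2\zeta}dr$ for each $\epsilon$, and lets $\epsilon\to0$. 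Combined with the opposite inequality from the first paragraph, this closes the loop without any $\|\cdot\|_H$-continuity of the perturbed semigroup.
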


\begin{proof}
  We first show that 
    \begin{align}\label{e.cEI}
    \ce_{\zeta,\eta}[v] \geq \ci_{\zeta,\eta}[v], \quad v\in L^2(\R_+,r^{2\zeta}dr).
  \end{align}
  To this end, recall from Proposition~\ref{summarypropertiespzeta} that $p_\zeta^{(\alpha)}$ is a strongly continuous, symmetric semigroup on $L^2(\R_+,r^{2\zeta}dr)$. By \eqref{eq:ezetalimit},
  \begin{align}
    \ce_{\zeta}(t)[v]
    := \frac1t\langle v,(1-p_\zeta^{(\alpha)}(t,\cdot,\cdot))v\rangle_{L^2(\R_+,r^{2\zeta}dr)}
    \stackrel{t\to0}{\longrightarrow} \ce_\zeta[v], \quad v\in L^2(\R_+,r^{2\zeta}dr).
  \end{align}
  By Theorem~\ref{propertiesschrodheatkernel}(6), $p_{\zeta,\eta}^{(\alpha)}(t,\cdot,\cdot)$ is also a contraction on $L^2(\R_+,r^{2\zeta}dr)$. Next, let $v\in L^2(\R_+,r^{2\zeta}dr)$ and, for $t>0$,
  \begin{align}
    \begin{split}
      \ce_{\zeta,\eta}(t)[v]
      & := \frac1t\langle v,(1-p_{\zeta,\eta}^{(\alpha)}(t,\cdot,\cdot))v\rangle_{L^2(\R_+,r^{2\zeta}dr)} \\
      & \ = \frac1t\left[\langle v,v\rangle - \langle p_{\zeta,\eta}^{(\alpha)}(t/2,\cdot,\cdot)v, p_{\zeta,\eta}^{(\alpha)}(t/2,\cdot,\cdot)v\rangle_{L^2(\R_+,r^{2\zeta}dr)}\right].
    \end{split}
  \end{align}
  Theorem~\ref{propertiesschrodheatkernel}(4) asserts that $p_{\zeta,\eta}^{(\alpha)}(t,\cdot,\cdot)h=h$ for $h(r)=r^{-\eta}$ and all $t>0$. Therefore,
  \begin{align}
    v(r) - (p_{\zeta,\eta}^{(\alpha)}(t,\cdot,\cdot)v)(r)
    = \int_0^\infty ds\, s^{2\zeta} p_{\zeta,\eta}^{(\alpha)}(t,r,s)\left(\frac{v(r)}{h(r)}-\frac{v(s)}{h(s)}\right)h(s)
  \end{align}
  and so
  \begin{align}
    \begin{split}
      & \langle v,v- p_{\zeta,\eta}^{(\alpha)}(t,\cdot,\cdot)v\rangle_{L^2(\R_+,r^{2\zeta}dr)} \\
      & \quad = \int_0^\infty dr\int_0^\infty ds\, (rs)^{2\zeta} p_{\zeta,\eta}^{(\alpha)}(t,r,s)\left(\frac{v(r)}{h(r)}-\frac{v(s)}{h(s)}\right) \cdot \frac{v(r)}{h(r)}h(r)h(s) \\
      & \quad = \int_0^\infty dr\int_0^\infty ds\, (rs)^{2\zeta} p_{\zeta,\eta}^{(\alpha)}(t,r,s)\left(\frac{v(s)}{h(s)}-\frac{v(r)}{h(r)}\right)\cdot\frac{v(s)}{h(s)}h(r)h(s),
    \end{split}
  \end{align}
  where we used $p_{\zeta,\eta}^{(\alpha)}(t,r,s)=p_{\zeta,\eta}^{(\alpha)}(t,s,r)$ to deduce the last equality. Therefore,
  \begin{align}
    \begin{split}
      \ce_{\zeta,\eta}(t)[v]
      & = \frac1t\langle v,(v- p_{\zeta,\eta}^{(\alpha)}(t,\cdot,\cdot)v)\rangle_{L^2(\R_+,r^{2\zeta}dr)} \\
      & = \frac{1}{2t}\iint_{\R_+\times\R_+} dr\,ds\, (rs)^{2\zeta} p_{\zeta,\eta}^{(\alpha)}(t,r,s)\left(\frac{v(r)}{h(r)}-\frac{v(s)}{h(s)}\right)^2 h(r)h(s) \\
      & \geq \frac12\iint_{\R_+\times\R_+} dr\,ds\, (rs)^{2\zeta} \frac{p_\zeta^{(\alpha)}(t,r,s)}{t} \left(\frac{v(r)}{h(r)}-\frac{v(s)}{h(s)}\right)^2 h(r)h(s),
    \end{split}
  \end{align}
  where we used $p_{\zeta,\eta}^{(\alpha)}(t,r,s)\geq p_\zeta^{(\alpha)}(t,r,s)$.
  Thus, by taking the limit $t\to0$, Fatou's lemma, the definition \eqref{eq:defnuell1}, and the bound \eqref{eq:defnuell4}, we infer the first desired inequality \eqref{e.cEI},
  which in particular implies $\D(\ce_{\zeta,\eta})\subseteq\D(\ci_{\zeta,\eta})$.

  We next show the converse inequality $\ce_{\zeta,\eta}[v] \leq \ci_{\zeta,\eta}[v]$. By \eqref{e.cEI}, the definition of $q$ in \eqref{eq:defpsietazetaOLD}, and $\ci_{\zeta,\eta}$ in \eqref{eq:defizetaeta} and \eqref{eq:hardyremainderagaintransformedlimit}, we obtain
  \begin{align}
    \label{eq:boglem51aux1}
    \begin{split}
      \ce_{\zeta,\eta}[v] + \int_0^\infty dr\, r^{2\zeta} |v(r)|^2q(r)
      & \geq \ci_{\zeta,\eta}[v] + \int_0^\infty dr\, r^{2\zeta} |v(r)|^2q(r) \\
      & = \ce_\zeta[v], \quad v\in L^2(\R_+,r^{2\zeta}dr).
    \end{split}
  \end{align}
  We now consider an approximation of $\ce_{\zeta,\eta}$. For $\epsilon>0$, let
  \begin{align}
    q^\epsilon(z) := q(z)\wedge \frac1\epsilon
  \end{align}
  and $\tilde p_\zeta^{(\alpha,\epsilon)}$ be the Schr\"odinger perturbation of $p_\zeta^{(\alpha)}$ by $q^\epsilon>0$, i.e.,
  \begin{align}
    p_0^{(\alpha,\epsilon)}(t,r,s) & := p_\zeta^{(\alpha)}(t,r,s), \\
    p_n^{(\alpha,\epsilon)}(t,r,s) & := \int_0^t d\tau \int_0^\infty dz\, z^{2\zeta} p_{n-1}^{(\alpha,\epsilon)}(\tau,r,z)q^\epsilon(z)p_\zeta^{(\alpha)}(t-\tau,z,s), \quad n\in\N, \\
    \tilde p_\zeta^{(\alpha,\epsilon)}(t,r,s) & := \sum_{n\geq0} p_n^{(\alpha,\epsilon)}(t,r,s).
  \end{align}
  Of course, we regard these kernels with the reference (speed) measure $r^{2\zeta}dr$ on $\R_+$.
  By induction,
  \begin{align}
    0 \leq p_n^{(\alpha,\epsilon)}(t,r,s) \leq p_\zeta^{(\alpha)}(t,r,s) \cdot \left(\frac{t}{\epsilon}\right)^n \cdot \frac{1}{n!}, \quad n\in\N_0.
  \end{align}
  Thus,
  \begin{align}
    \begin{split}
      0 \leq p_\zeta^{(\alpha,*)}(t,r,s) & := \tilde p_\zeta^{(\alpha,\epsilon)}(t,r,s) - p_\zeta^{(\alpha)}(t,r,s) - p_1^{(\alpha,\epsilon)}(t,r,s) \\
      & \ \leq p_\zeta^{(\alpha)}(t,r,s) \left(\me{t/\epsilon}-1-\frac{t}{\epsilon}\right)
        = p_\zeta^{(\alpha)}(t,r,s) \cdot \mathcal{O}(t^2) \quad \text{as}\ t\to0
    \end{split}
  \end{align}
  for all $r,s>0$.
  Recalling the normalization \eqref{eq:normalizedalpha}, we observe
  \begin{align}
    \frac1t |\langle v,p^{(\alpha,*)}(t,\cdot,\cdot)v\rangle_{L^2(\R_+,r^{2\zeta}dr)}|
    \lesssim t \|v\|_{L^2(\R_+,r^{2\zeta}dr)}^2 \to0 \quad \text{as}\ t\to0.
  \end{align}
  Next, for $v\in L^2(\R_+,r^{2\zeta}dr)$, we
  have
  \begin{align}
    \footnotesize
    \begin{split}
      \frac1t\langle v,p_1^{(\alpha,\epsilon)}(t,\cdot,\cdot)v\rangle
      & = \frac1t\int_0^\infty dr\int_0^\infty ds \int_0^t d\tau \int_0^\infty dz\, (rsz)^{2\zeta} p_\zeta^{(\alpha)}(\tau,r,z) q^\epsilon(z) p_\zeta^{(\alpha)}(t-\tau,z,s) v(s)v(r) \\
      & = \frac1t \int_0^\infty dz \int_0^t d\tau\, z^{2\zeta} (p_\zeta^{(\alpha)}(\tau,\cdot,\cdot)v)(z) q^\epsilon(z) (p_\zeta^{(\alpha)}(t-\tau,\cdot,\cdot)v)(z).
    \end{split}
  \end{align}
  Since $\{p_\zeta^{(\alpha)}(t,\cdot,\cdot)\}_{t\geq0}$ is a strongly continuous contraction semigroup on $L^2(\R_+,r^{2\zeta}dr)$, the above integral converges to $\int_0^\infty dz\, z^{2\zeta}q^\epsilon(z) v(z)^2$ as $t\to0$. Thus, we obtain that $\{\tilde p_{\zeta}^{(\alpha,\epsilon)}(t,\cdot,\cdot)\}_{t\geq0}$ forms a symmetric strongly continuous semigroup on $L^2(\R_+,r^{2\zeta}dr)$, too. Moreover, each $\tilde p_\zeta^{(\alpha,\epsilon)}(t,\cdot,\cdot)$ is contractive because it is smaller than $p_\zeta^{(\alpha)}(t,\cdot,\cdot)$ on non-negative functions.
  
  Next, note that
  \begin{align*}
    \ce_{\zeta}^{(\epsilon)}(t)[v] := \frac1t\langle v,v- \tilde p_\zeta^{(\alpha,\epsilon)}(t,\cdot,\cdot)v\rangle_{L^2(\R_+,r^{2\zeta}dr)}
    \to \ce_{\zeta}^{(\epsilon)}[v] := \sup_{s>0} \ce_{\zeta}^{(\epsilon)}(s)[v]
    \ \text{as} \ t\to0.
  \end{align*}
  Therefore,
  \begin{align}
    \ce_\zeta^{(\epsilon)}[v]
    = \ce_\zeta[v] - \int_0^\infty dz\, z^{2\zeta} q^\epsilon(z)v(z)^2,
    \quad v\in L^2(\R_+,r^{2\zeta}dr).
  \end{align}
  Since
  $p_{\zeta,\eta}^{(\alpha)}(t,\cdot,\cdot)v \geq \tilde p_\zeta^{(\alpha,\epsilon)}(t,\cdot,\cdot)v$
  for $0\leq v\in L^2(\R_+,r^{2\zeta}dr)$ and all $t>0$,
  we have
  $\ce_{\zeta,\eta}(t)[v]\leq \ce_\zeta^{(\epsilon)}(t)[v]$ for $v\in L^2(\R_+,r^{2\zeta}dr)$
  and all $t>0$. Therefore,
  \begin{align}
    \ce_{\zeta,\eta}[v] + \int_0^\infty dz\, z^{2\zeta}q(z) v(z)^2 \leq \ce_\zeta[v].
  \end{align}
  Combining this inequality with \eqref{eq:boglem51aux1} shows that for all $0\leq v\in L^2(\R_+,r^{2\zeta}dr)$,
  \begin{align}
    \label{eq:boglem51aux2}
    \ce_{\zeta,\eta}[v] + \int_0^\infty dz\, z^{2\zeta}q(z)v(z)^2
    = \ce_\zeta[v]
    = \ci_{\zeta,\eta}[v] + \int_0^\infty dz\, z^{2\zeta}q(z)v(z)^2.
  \end{align}
  For general signed $v\in\D(\ce_\zeta)$, we decompose $v=v_+-v_-$ with $v_+=v\one_{f\geq0}$ and $v_-=v\one_{f\leq0}$ both belonging to $\D(\ce_\zeta)$. Since $\D(\ce_\zeta)$ and $\D(\ce_{\zeta,\eta})$ are linear spaces, we obtain $\D(\ce_\zeta)\subseteq\D(\ce_{\zeta,\eta})$. 
  The extension of \eqref{eq:boglem51aux2} to $v$ with variable sign follows repeating the previous steps using the decomposition of $v$ into its positive and negative parts.
\end{proof}

\subsubsection{Form cores of subordinated Bessel operators}
\label{ss:besseldomainalpha}

Recall that $C_c^\infty(\R^d)$ is dense in $H^{\alpha/2}(\R^d)$ for all real $\alpha$, see, e.g., Bahouri, Chemin, and Danchin \cite[Theorems~1.61]{Bahourietal2011}. Moreover, $C_c^\infty(\R^d\setminus\{0\})$ is dense in $H^{\alpha/2}(\R^d)$ for all $\alpha\leq d$ and $C_c^\infty(\R^d\setminus\{0\})$ is dense in $\dot H^{\alpha/2}(\R^d)$ for all $\alpha<d$; see, e.g., \cite[Theorem~1.70]{Bahourietal2011}.

\smallskip
The goal of this section is to show, in Lemma~\ref{besseldomainalpha}, that $C_c^\infty(\R_+)$ is a form core for $\ci_{\zeta,\eta}$, i.e., functions in $\D(\ci_{\zeta,\eta})$ can be approximated by $C_c^\infty(\R_+)$ functions in the norm $\sqrt{\ci_{\zeta,\eta}[\cdot]} + \|\cdot\|_{L^2(\R+, r^{2\zeta} , dr)}$ for all $\zeta \in (-1/2, \infty)$, $\alpha \in (0, 2\wedge(2\zeta + 1))$, and $\eta \in (0, (2\zeta + 1 - \alpha)/2]$.

In the following, we write $h_\beta(r):=r^{-\beta}$\index{$h_\beta(r)$} for $\beta\in\R$ and $r>0$. Recall \eqref{eq:defh}, i.e., 
$$
h(r) = h_\eta(r).
$$
Let $\chi\in C_c^\infty(\R)$ be a symmetric, non-negative bump function with $\supp\chi\subseteq(-1/2,1/2)$, and such that $\int_\R\chi(r)\,dr=1$. For $0<\epsilon<1$, define $\rho_\epsilon(r):=\epsilon^{-1}\chi(|r-1|/\epsilon)$ so that $\supp \rho_\epsilon\in(1-\epsilon/2,1+\epsilon/2)$. Thus, $\rho_\epsilon(r)$ is an approximation of the identity centered at $r=1$.
For $\epsilon>0$, we define approximations of $f\in L_\loc^1(\R_+)$ by
\begin{align}
  \label{eq:defapproximation}
  f_\epsilon(r) := \int_0^\infty ds\, \rho_\epsilon(s)f(r/s), \quad r>0.
\end{align}
Note that $f_\epsilon\in C^\infty(\R_+)$. Indeed, 
the smoothness follows since we can estimate the derivatives of the integrand in
\begin{align}
  f_\epsilon(r) = \int_0^\infty ds\, r\, s^{-2} \rho_\epsilon\left(\frac rs\right) f(s).
\end{align}
We now show some elementary properties (bounds and behavior at infinity) of such approximations.
\begin{lemma}
  \label{approximation}
  Let $\zeta\in(-1/2,\infty)$, $f\in L_\loc^1(\R_+)$, $\epsilon\in(0,1)$, and $f_\epsilon$ be defined by \eqref{eq:defapproximation}.
  \begin{enumerate}
  \item If $\beta\in\R$ and $h_\beta f\in L^2(\R_+,r^{2\zeta}\,dr)$ then
  \begin{align}
    \label{e.ba}
    \|h_\beta f_\epsilon\|_{L^2(\R_+,r^{2\zeta}dr)}
    \leq \left[(1+\epsilon/2)^{\zeta+1/2-\beta}\vee(1-\epsilon/2)^{\zeta+1/2-\beta}\right] \|h_\beta f\|_{L^2(\R_+,r^{2\zeta}dr)}.
  \end{align}
  
  \item If $f\in L^2(\R_+,r^{2\zeta}dr)$ then $\lim_{r\to \infty}f_\epsilon(r)=0$ and $\lim\limits_{\epsilon\to0} \|f-f_\epsilon\|_{L^2(\R_+,r^{2\zeta}dr)} = 0$.
  \end{enumerate}
\end{lemma}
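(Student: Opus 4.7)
For part (1), the strategy is to reduce the estimate to a weighted Young-type inequality for multiplicative convolution. The key algebraic identity is $h_\beta(r) = s^{-\beta} h_\beta(r/s)$, which allows rewriting
\begin{align*}
  (h_\beta f_\epsilon)(r) = \int_0^\infty \rho_\epsilon(s)\, s^{-\beta}\, (h_\beta f)(r/s)\, ds.
\end{align*}
Applying Minkowski's integral inequality together with the scaling identity $\|g(\cdot/s)\|_{L^2(\R_+, r^{2\zeta}dr)} = s^{\zeta + 1/2} \|g\|_{L^2(\R_+, r^{2\zeta}dr)}$ (obtained from the substitution $u = r/s$), one bounds
\begin{align*}
  \|h_\beta f_\epsilon\|_{L^2(\R_+, r^{2\zeta}dr)} \leq \|h_\beta f\|_{L^2(\R_+, r^{2\zeta}dr)} \int_0^\infty \rho_\epsilon(s)\, s^{\zeta + 1/2 - \beta}\, ds.
\end{align*}
Since $\supp \rho_\epsilon \subseteq (1-\epsilon/2, 1+\epsilon/2)$ and $\int \rho_\epsilon = 1$, the last integral is dominated by the maximum of $s^{\zeta+1/2-\beta}$ on that interval, which is exactly the factor appearing in the claim.

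For the pointwise vanishing at infinity in part (2), I would apply Cauchy--Schwarz to obtain $|f_\epsilon(r)|^2 \leq \|\rho_\epsilon\|_\infty \int_{1-\epsilon/2}^{1+\epsilon/2} |f(r/s)|^2\, ds$, then substitute $u = r/s$ to get
\begin{align*}
  \int_{1-\epsilon/2}^{1+\epsilon/2} |f(r/s)|^2\, ds = \int_{r/(1+\epsilon/2)}^{r/(1-\epsilon/2)} |f(u)|^2\, \frac{r}{u^2}\, du.
\end{align*}
On the integration range one has $u \sim_\epsilon r$, so $r/u^2 \lesssim_\epsilon r^{-1}$ and $u^{-2\zeta} \lesssim_\epsilon r^{-2\zeta}$; combining these, the right-hand side is bounded (up to constants depending on $\epsilon$ and $\zeta$) by $r^{-(2\zeta+1)} \int_{r/(1+\epsilon/2)}^\infty |f(u)|^2 u^{2\zeta}\, du$, which vanishes as $r\to\infty$ because it is the tail of an integrable function.

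For the $L^2$-convergence $\|f - f_\epsilon\|_{L^2(\R_+, r^{2\zeta}dr)} \to 0$, I would introduce the dilation $(T_s f)(r) := f(r/s)$ and write $f_\epsilon - f = \int_0^\infty \rho_\epsilon(s)(T_s f - f)\, ds$. Minkowski's inequality then yields $\|f_\epsilon - f\|_{L^2} \leq \int_0^\infty \rho_\epsilon(s) \|T_s f - f\|_{L^2}\, ds$, and it suffices to establish strong continuity of $s \mapsto T_s f$ at $s=1$ in $L^2(\R_+, r^{2\zeta}dr)$. For $g \in C_c(\R_+)$ this is immediate via dominated convergence combined with the scaling identity from part~(1); for general $f \in L^2(\R_+, r^{2\zeta}dr)$, one uses a standard three-epsilon approximation by $C_c(\R_+)$ functions, controlled uniformly for $s$ near $1$ by the bound of part~(1) with $\beta=0$. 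The main subtlety here is that the measure $r^{2\zeta}dr$ is not dilation-invariant; however, the scaling factor $s^{\zeta+1/2}$ in the $L^2$-norm remains uniformly close to $1$ for $s$ close to $1$, so the approximation argument goes through.
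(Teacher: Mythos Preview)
Your proof is correct and follows essentially the same strategy as the paper's. The only notable variation is in part~(1), where you invoke Minkowski's integral inequality while the paper uses Jensen's inequality on the probability measure $\rho_\epsilon(s)\,ds$; both routes yield the identical bound, and your treatments of the vanishing at infinity and the $L^2$-approximation (via density of $C_c(\R_+)$ and the bound from part~(1) with $\beta=0$) mirror the paper's argument.
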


\begin{proof}
  (1) The $L^2$-bound \eqref{e.ba} follows from Jensen's inequality since
  \begin{align}
    \begin{split}
      \|h_\beta f_\epsilon\|_{L^2(\R_+,r^{2\zeta}dr)}^2
      & = \int_{\R_+}dr\, r^{2\zeta} h_\beta(r)^2 \left|\int_{\R_+}dz\, \rho_\epsilon(z) f(r/z)\right|^2 \\
      & \leq \int_{\R_+}dr\, r^{2\zeta} h_\beta(r)^2 \int_{\R_+}dz\, \rho_\epsilon(z)|f(r/z)|^2 \\
      & = \int_{\R_+}dr\, r^{2\zeta} h_\beta(r)^2 |f(r)|^2 \int_{\R_+}dz\, \rho_\epsilon(z)z^{2\zeta+1-2\beta} \\
      & \leq \|h_\beta f\|_{L^2(\R_+,r^{2\zeta}dr)}^2 \left[(1+\epsilon/2)^{2\zeta+1-2\beta}\vee (1-\epsilon/2)^{\zeta+1/2-\beta}\right].
    \end{split}
  \end{align}
  
  (2) To show that $f_\epsilon(r)$ vanishes as $r\to \infty$, let $p := 2\zeta+2>1$. 
  Since $\supp(\rho_\epsilon) \subseteq [1/2, 3/2]$, by the Cauchy--Schwarz inequality and the change of variables $z' = r/z$,
  \begin{align}
    \begin{split}
      |f_\epsilon(r)|^2
      & = \left|\int_{\R_+} dz\, \rho_\epsilon(z)\left(\frac{z}{r}\right)^{p/2} \left(\frac{r}{z}\right)^{p/2} \, f(r/z) \right|^2 \\
      & \leq \left(\int_{\R_+} dz\, \rho_\epsilon(z)^2\left(\frac{z}{r}\right)^p\right) \cdot \left(\int_{\R_+}\frac{dz}{z}\, z\cdot\left(\frac{r}{z}\right)^p|f(r/z)|^2\right) \\
      & \lesssim_{p,\epsilon} r^{1-p} \int_{\R_+} dz\, z^{2\zeta} |f(z)|^2\to 0 \quad \mbox{as}\quad r\to\infty.
    \end{split}
  \end{align}
  We next prove the $L^2$-approximation. For 
  $f\in L^2(\R_+,r^{2\zeta}dr)$ and $\phi\in C_c^\infty(\R_+)$, 
  $$
  \|f-f_\epsilon\|_{L^2(\R_+,r^{2\zeta}dr)}\le 
  \|f-\phi\|_{L^2(\R_+,r^{2\zeta}dr)}+\|\phi-\phi_\epsilon\|_{L^2(\R_+,r^{2\zeta}dr)}+
  \|(\phi-f)_\epsilon\|_{L^2(\R_+,r^{2\zeta}dr)}.
  $$
  Since $r^{2\zeta}dr$ is a Radon measure on $\R_+$, $C_c^\infty(\R_+)$ is dense in $L^2(\R_+,r^{2\zeta}dr)$; see, e.g., \cite[pp.~189--190]{Schilling2017}. Thus, by \eqref{e.ba} with $\beta=0$, it suffices to consider $f\in C_c^\infty(\R_+)$, and then, by Jensen's inequality,
  \begin{align}
    \begin{split}
      \|f-f_\epsilon\|_{L^2(\R_+,r^{2\zeta}dr)}^2
      & = \int_{\R_+}dr\, r^{2\zeta}\left|\int_{\R_+}dz\, \rho_\epsilon(z)[f(r)-f(r/z)]\right|^2 \\
      & \leq \int_{\R_+}dr\, r^{2\zeta}\int_{\R_+}dz\, \rho_\epsilon(z)|f(r)-f(r/z)|^2.\nonumber
    \end{split}
  \end{align}
  The integrand is zero unless $r\in (1-\epsilon/2,1+\epsilon/2)\cdot \supp f$ and $|z-1|\leq\epsilon/2$, in which case $|r-r/z|<\epsilon r$ and $|f(r)-f(r/z)|\leq \delta$ for any given $\delta>0$, provided $\epsilon$ is small enough.
  Therefore, 
  \begin{align}
    \begin{split}
      & \|f-f_\epsilon\|_{L^2(\R_+,r^{2\zeta}dr)}^2
      \le\delta^2 \, |(1-\epsilon/2,1+\epsilon/2)\cdot \supp f|^{2\zeta+1}/(2\zeta+1).\nonumber
    \end{split}
  \end{align}
  This concludes the proof.
\end{proof}

We next estimate $\ci_{\zeta,\eta}[h_{\eta}f_\epsilon]+ \|h_{\eta} f_\epsilon\|_{L^2(\R_+,r^{2\zeta}\,dr)}^2$ uniformly in $\epsilon$.

\begin{lemma}
  \label{boglem52}
  Let $\zeta\in(-1/2,\infty)$, $\alpha\in(0,2\wedge(2\zeta+1))$, $\eta\in[0,(2\zeta+1-\alpha)/2]$. Then, for all $\epsilon\in(0,1)$, $f\in L_\loc^1(\R_+)$, and $f_\epsilon$ as in \eqref{eq:defapproximation},
  \begin{align}
    \label{eq:boglem52}
    \begin{split}
      & \ci_{\zeta,\eta}[h f_\epsilon] + \|h f_\epsilon\|_{L^2(\R_+,r^{2\zeta}\,dr)}^2
      \lesssim_{\zeta,\alpha,\eta,\rho} \ci_{\zeta,\eta}[h f] + \|h f\|_{L^2(\R_+,r^{2\zeta}\,dr)}^2.
    \end{split}
  \end{align}
\end{lemma}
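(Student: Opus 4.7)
The plan is to exploit the fact that all geometric objects appearing in the inequality---the reference measure $r^{2\zeta}dr$, the L\'evy kernel $\nu_\zeta(r,s)$, and the weight $h(r) = r^{-\eta}$---are homogeneous under multiplicative dilations $r \mapsto \tau r$. Combined with Jensen's inequality for the probability density $\rho_\epsilon$, this should reduce \eqref{eq:boglem52} to bounding integrals of the form $\int_0^\infty \rho_\epsilon(\tau) \tau^{\beta}\,d\tau$ uniformly in $\epsilon \in (0,1)$ for certain exponents $\beta$.

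First I would rewrite $\ci_{\zeta,\eta}[hf_\epsilon]$ using the fact that $u(r)/h(r) = f_\epsilon(r)$ for $u = hf_\epsilon$, so that the integrand of $\ci_{\zeta,\eta}[hf_\epsilon]$ contains $|f_\epsilon(r) - f_\epsilon(s)|^2$. By Jensen's inequality applied to the probability density $\rho_\epsilon$,
\begin{equation*}
|f_\epsilon(r) - f_\epsilon(s)|^2 = \left|\int_0^\infty \rho_\epsilon(\tau)[f(r/\tau) - f(s/\tau)]\,d\tau\right|^2 \leq \int_0^\infty \rho_\epsilon(\tau) |f(r/\tau) - f(s/\tau)|^2\,d\tau.
\end{equation*}
Interchanging the order of integration and changing variables $(r,s) = (\tau r', \tau s')$ in the inner double integral, I would use the scaling $\nu_\zeta(\tau r', \tau s') = \tau^{-(2\zeta+1+\alpha)}\nu_\zeta(r', s')$---which follows from the scaling relation \eqref{eq:scalingalpha} in Proposition~\ref{summarypropertiespzeta} via $\nu_\zeta(r,s) = \lim_{t\to 0} p_\zeta^{(\alpha)}(t,r,s)/t$---together with $h(\tau r')h(\tau s') = \tau^{-2\eta}h(r')h(s')$ and $r^{2\zeta}s^{2\zeta}\,dr\,ds = \tau^{4\zeta+2}(r')^{2\zeta}(s')^{2\zeta}\,dr'\,ds'$. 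Collecting the powers of $\tau$ produces a net factor $\tau^{2\zeta+1-\alpha-2\eta}$, so
\begin{equation*}
\ci_{\zeta,\eta}[hf_\epsilon] \leq \left(\int_0^\infty \rho_\epsilon(\tau)\tau^{2\zeta+1-\alpha-2\eta}\,d\tau\right) \ci_{\zeta,\eta}[hf].
\end{equation*}
Since $\supp \rho_\epsilon \subseteq (1/2, 3/2)$ and $\int \rho_\epsilon = 1$, the prefactor is bounded by a constant depending only on $\zeta, \alpha, \eta$, uniformly in $\epsilon \in (0,1)$.

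The $L^2$-bound is analogous but easier. Applying Jensen's inequality directly to $|f_\epsilon(r)|^2$ and performing the same multiplicative change of variables yields
\begin{equation*}
\|hf_\epsilon\|_{L^2(\R_+, r^{2\zeta}dr)}^2 \leq \left(\int_0^\infty \rho_\epsilon(\tau) \tau^{2\zeta+1-2\eta}\,d\tau\right) \|hf\|_{L^2(\R_+, r^{2\zeta}dr)}^2,
\end{equation*}
with the prefactor again uniformly bounded. Summing the two displayed inequalities gives \eqref{eq:boglem52}.

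I do not anticipate any real obstacle in this argument; it is a matter of carefully tracking homogeneities. The only mild points requiring attention are the scaling of $\nu_\zeta$ (which one must deduce from that of $p_\zeta^{(\alpha)}$ before passing to the limit $t\to 0$) and the observation that the exponents $2\zeta+1-\alpha-2\eta$ and $2\zeta+1-2\eta$ are both non-negative on the admissible range $\eta \in [0,(2\zeta+1-\alpha)/2]$, so the corresponding powers of $\tau$ remain uniformly bounded on the support of $\rho_\epsilon$.
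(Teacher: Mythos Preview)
Your proposal is correct and follows essentially the same route as the paper: Jensen's inequality for the probability density $\rho_\epsilon$, followed by the multiplicative change of variables $(r,s)\mapsto(\tau r,\tau s)$ exploiting the homogeneity $\nu_\zeta(\tau r,\tau s)=\tau^{-(2\zeta+1+\alpha)}\nu_\zeta(r,s)$, which yields exactly the prefactor $\int_0^\infty\rho_\epsilon(\tau)\tau^{2\zeta+1-\alpha-2\eta}\,d\tau$. The $L^2$ bound is the content of Lemma~\ref{approximation}(1) with $\beta=\eta$, whose proof is precisely the Jensen-plus-scaling argument you give.
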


\begin{proof}
  By Lemma~\ref{approximation}, the bound holds for $\|h f_\epsilon\|_{L^2(\R_+,r^{2\zeta}\,dr)}$. To estimate $\ci_{\zeta,\eta}[h f_\epsilon]$, we apply Jensen's inequality and change variables $r' = r/z$, $s'=s/z$:
  \begin{align}
    \begin{split}
      & 2\ci_{\zeta,\eta}[h f_\epsilon] \\
      & \quad = \int_0^\infty dr\, r^{2\zeta}\int_0^\infty ds\, s^{2\zeta}\, \nu_{\zeta}(r,s) \left|\int_0^\infty dz\, \rho_\epsilon(z)\, (f(\tfrac rz)-f(\tfrac sz))\right|^2 h(r) h(s) \\
      & \quad \leq \int\limits_{(\R_+)^3}dr\,ds\,dz\, (rs)^{2\zeta-\eta}\, \nu_\zeta(r,s)\rho_\epsilon(z)\, |f(\tfrac rz)-f(\tfrac sz)|^2 \\
      & \quad = \int\limits_{(\R_+)^2}dr\,ds\, (rs)^{2\zeta-\eta} \nu_\zeta(r,s) |f(r)-f(s)|^2 \int_{\R_+}dz\, z^{2(\zeta-\eta)+1-\alpha} \rho_\epsilon(z).
    \end{split}
  \end{align}
  Here we used $\nu_\zeta(rz,sz)=z^{-2\zeta-1-\alpha}\nu_\zeta(r,s)$. Since
  \begin{align}
    \label{eq:boglem52aux}
    \int_{\R_+}dz\, z^{2(\zeta-\eta)+1-\alpha} \rho_\epsilon(z)
    \sim_{\zeta,\alpha,\eta,\rho} 1,
    \quad \epsilon\in(0,1),
  \end{align}
  the proof is concluded.
\end{proof}

To show that $C_c^\infty(\R_+)$ is a form core, we will use two bounds in the proof of the following lemma (see \eqref{e.BeBe}--\eqref{e.BeBe2}), which is interesting in its own right. For $R>0$, we write $B_R:=(0,R)$ and $B_R^c:=[R,\infty)$.

\begin{lemma}
  \label{boglem53}
  Let $\zeta\in(-1/2,\infty)$, $\alpha\in(0,2\wedge(2\zeta+1))$, $\eta\in[0,(2\zeta+1-\alpha)/2]$. Let $\chi\in C^\infty(\R_+)$ with $\one_{(1,\infty)}\leq\chi\leq\one_{(1/2,\infty)}$. Then there is $c_{\zeta,\alpha,\eta}>0$ such that for all $f\in C_c^\infty(\R_+)$ and $\epsilon\in(0,1)$,
  \begin{align}
    \label{eq:boglem53}
    \ci_{\zeta,\eta}[h f\, \chi(\cdot/\epsilon)]
    \leq 2\ci_{\zeta,\eta}[h f] + c_{\zeta,\alpha,\eta} \left[(\|\chi'\|_\infty + \|\chi\|_\infty) \|f\|_\infty + \|\chi\|_\infty \|f'\|_\infty\right]^2.
  \end{align}
\end{lemma}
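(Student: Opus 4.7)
My strategy is to carry out a Leibniz-type separation of the variation of $f\,\chi(\cdot/\epsilon)$, controlling the variation of $f$ directly against $\ci_{\zeta,\eta}[hf]$ and reducing the variation of the cutoff to a scale-invariant integral depending only on $\chi$. Writing $\chi_\epsilon(r):=\chi(r/\epsilon)$, the quotient $u/h$ in the definition \eqref{eq:defizetaeta} for $u=hf\chi_\epsilon$ is simply $f\chi_\epsilon$, so
\[
\ci_{\zeta,\eta}[hf\chi_\epsilon] = \tfrac12\iint_{\R_+\times\R_+}(rs)^{2\zeta-\eta}\nu_\zeta(r,s)\bigl|f(r)\chi_\epsilon(r)-f(s)\chi_\epsilon(s)\bigr|^2\,dr\,ds.
\]
The symmetric product decomposition
\[
f(r)\chi_\epsilon(r)-f(s)\chi_\epsilon(s) = \tfrac12(f(r)-f(s))(\chi_\epsilon(r)+\chi_\epsilon(s)) + \tfrac12(f(r)+f(s))(\chi_\epsilon(r)-\chi_\epsilon(s))
\]
combined with $(a+b)^2\leq 2a^2+2b^2$ and $0\leq\chi\leq 1$ yields the pointwise bound $|f(r)\chi_\epsilon(r)-f(s)\chi_\epsilon(s)|^2\leq 2|f(r)-f(s)|^2 + 2\|f\|_\infty^2|\chi_\epsilon(r)-\chi_\epsilon(s)|^2$, and therefore
\[
\ci_{\zeta,\eta}[hf\chi_\epsilon] \leq 2\,\ci_{\zeta,\eta}[hf] + \|f\|_\infty^2\, J(\epsilon),
\]
where $J(\epsilon):=\iint_{\R_+\times\R_+}(rs)^{2\zeta-\eta}\nu_\zeta(r,s)|\chi_\epsilon(r)-\chi_\epsilon(s)|^2\,dr\,ds$.

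Next I exploit the homogeneity of the L\'evy kernel: the scaling relation \eqref{eq:scalingalpha} for $p_\zeta^{(\alpha)}$ transfers via the definition \eqref{eq:defnuell1} to $\nu_\zeta(\epsilon r,\epsilon s)=\epsilon^{-2\zeta-1-\alpha}\nu_\zeta(r,s)$. The change of variables $r\mapsto\epsilon r$, $s\mapsto\epsilon s$ therefore yields $J(\epsilon)=\epsilon^{2\zeta+1-\alpha-2\eta}\,J(1)$; under the hypothesis $\eta\leq(2\zeta+1-\alpha)/2$ the exponent is nonnegative, so $J(\epsilon)\leq J(1)$ for every $\epsilon\in(0,1)$.

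It then remains to prove the scale-free estimate $J(1)\lesssim_{\zeta,\alpha,\eta}\|\chi'\|_\infty^2+\|\chi\|_\infty^2$. I use $\nu_\zeta(r,s)\lesssim(r+s)^{-2\zeta}|r-s|^{-1-\alpha}$ from \eqref{eq:defnuell1} and split $J(1)$ into the near-diagonal piece $|r-s|\leq 1$ and the far-diagonal piece $|r-s|>1$. On the near-diagonal piece I apply $|\chi(r)-\chi(s)|\leq\|\chi'\|_\infty|r-s|$, reducing the integrand to $\|\chi'\|_\infty^2|r-s|^{1-\alpha}$ times the weight $(rs)^{2\zeta-\eta}(r+s)^{-2\zeta}$, with integration effectively localized near the transition region where $\chi$ is nonconstant; convergence at the diagonal uses $\alpha<2$, and integrability at $r=0$ uses $\eta<2\zeta+1$, which is automatic under the hypotheses. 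On the far-diagonal piece I bound $|\chi(r)-\chi(s)|\leq\|\chi\|_\infty$ and observe that the integrand vanishes unless one of $r,s$ lies in $[1/2,1]$ or the two lie on opposite sides of this set, so the remaining essentially one-dimensional tail integrals converge at infinity thanks to the decay $|r-s|^{-1-\alpha}(r+s)^{-2\zeta}$ together with the condition $\eta>-\alpha$. Assembling these estimates yields $J(1)\lesssim\|\chi'\|_\infty^2+\|\chi\|_\infty^2$, and the stated bound \eqref{eq:boglem53} follows from the trivial $\|f\|_\infty^2(\|\chi'\|_\infty^2+\|\chi\|_\infty^2)\leq[(\|\chi'\|_\infty+\|\chi\|_\infty)\|f\|_\infty+\|\chi\|_\infty\|f'\|_\infty]^2$. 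The \emph{main obstacle} is matching the convergence conditions on $J(1)$ at both endpoints to exactly the admissible range $\eta\in(-\alpha,(2\zeta+1-\alpha)/2]$; note that the $\|f'\|_\infty$ term appearing on the right-hand side of \eqref{eq:boglem53} is not forced by this approach and seems to be included only to yield a uniform formulation aligned with related estimates used later in the paper.
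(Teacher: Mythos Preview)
Your proof is correct and takes a genuinely different route from the paper. The paper partitions $\R_+\times\R_+$ into three regions according to where $r,s$ lie relative to $\epsilon$ and $2\epsilon$: on $B_\epsilon^c\times B_\epsilon^c$ the cutoff is identically~$1$, so $f\chi_\epsilon=f$ and this piece is bounded by $2\ci_{\zeta,\eta}[hf]$; on $B_{2\epsilon}\times B_{2\epsilon}$ the mean value theorem is applied directly to the product $f\chi_\epsilon$, which is why $\|f'\|_\infty$ enters; on $B_\epsilon\times B_{2\epsilon}^c$ a crude sup bound suffices. Each region is rescaled separately to extract the power $\epsilon^{2\zeta-2\eta+1-\alpha}\leq 1$. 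Your Leibniz-type decomposition instead separates the $f$- and $\chi_\epsilon$-variations globally, absorbs the former into $2\ci_{\zeta,\eta}[hf]$, and reduces the latter via the homogeneity $\nu_\zeta(\epsilon r,\epsilon s)=\epsilon^{-2\zeta-1-\alpha}\nu_\zeta(r,s)$ to the single scale-free integral $J(1)$, which you then bound by a near/far-diagonal split. This is conceptually cleaner and, as you correctly observe, yields a strictly stronger inequality that does not require $\|f'\|_\infty$. The trade-off is that the paper's region-by-region argument produces the specific intermediate bounds \eqref{e.BeBe}--\eqref{e.BeBe2} as by-products, and these are later invoked in the proof of Proposition~\ref{smoothdensehardy} to show $C_c^\infty(\R_+)\subseteq\D(\cq_\eta)$; with your approach that inclusion would have to be checked separately (though the same calculation bounding $J(1)$ adapts immediately to any $\phi\in C_c^\infty(\R_+)$).
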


\begin{proof}
  Denote $f^{(\epsilon)}(r):=f(r)\chi(r/\epsilon)$ and $A(r,s):=|f^{(\epsilon)}(r)-f^{(\epsilon)}(s)|^2\nu_\zeta(r,s)(rs)^{2\zeta-\eta}$. Since $f^{(\epsilon)}(r)=f(r)$ for $r\in B_\epsilon^c$,
  \begin{align}
    \iint_{B_\epsilon^c \times B_\epsilon^c} A(r,s)\,dr\,ds
    \leq 2\ci_{\zeta,\eta}[h f].
  \end{align}
  By the mean value theorem,
  \begin{align}
    |f^{(\epsilon)}(r)-f^{(\epsilon)}(s)|
    \leq |r-s| \left(\|f'\|_\infty \|\chi\|_\infty + \epsilon^{-1}\|\chi'\|_\infty\|f\|_\infty\right).
  \end{align}
  This estimate and scaling yield
  \begin{align}
    \label{e.BeBe}
    \begin{split}
      & \iint_{B_{2\epsilon}\times B_{2\epsilon}} A(r,s)dr\,ds \\
      & \quad \leq \epsilon^{-2} \left(\|f'\|_\infty \|\chi\|_\infty + \|\chi'\|_\infty\|f\|_\infty\right)^2 \iint_{B_{2\epsilon}\times B_{2\epsilon}} |r-s|^2\nu_\zeta(r,s)(rs)^{2\zeta-\eta}\,dr\,ds \\
      & \quad = c_{\zeta,\alpha,\eta}\, \epsilon^{2\zeta-2\eta+1-\alpha} \left(\|f'\|_\infty \|\chi\|_\infty + \|\chi'\|_\infty\|f\|_\infty\right)^2 \\
      & \quad \leq c_{\zeta,\alpha,\eta} \left(\|f'\|_\infty \|\chi\|_\infty + \|\chi'\|_\infty\|f\|_\infty\right)^2.
    \end{split}
  \end{align}
   Here $2\zeta-2\eta+1-\alpha\ge 0$ and $c_{\zeta,\alpha,\eta}\lesssim\int_0^1 dr \int_0^1 ds\, (rs)^{2\zeta-\eta}(r+s)^{-2\zeta}|r-s|^{1-\alpha}<\infty$ by elementary computations using $\eta\leq(2\zeta+1-\alpha)/2$ and $\zeta>-1/2$. See also \eqref{eq:defnuell1}.
   
  Finally, by \eqref{eq:defnuell1} and the triangle inequality,
  \begin{align}
    \label{e.BeBe2}
    \begin{split}
      \iint_{B_\epsilon\times B_{2\epsilon}^c} A(r,s)\,dr\,ds
      & \leq 4\|f\|_\infty^2 \iint_{B_\epsilon\times B_{2\epsilon}^c} \nu_\zeta(r,s) (rs)^{2\zeta-\eta}\,dr\,ds \\
      & = c_{\zeta,\alpha,\eta}\, \epsilon^{2\zeta-2\eta+1-\alpha} \|f\|_\infty^2 \\
      & \leq c_{\zeta,\alpha,\eta}\, \|f\|_\infty^2.
    \end{split}
  \end{align} 
  This concludes the proof.
\end{proof}

In the following, we use truncation arguments similar to those of Schilling and Uemura \cite{SchillingUemura2012}.
These involve, for $\epsilon>0$, the function
\begin{align}
  \label{eq:sutruncation}
  T_\epsilon(x) := \left(-\frac1\epsilon\right) \vee \left(x- (-\epsilon)\vee x \wedge \epsilon\right) \wedge \frac1\epsilon, \quad x\in\R.
\end{align}

\begin{figure}[H]
    \centering
    \includegraphics[width=0.5\linewidth]{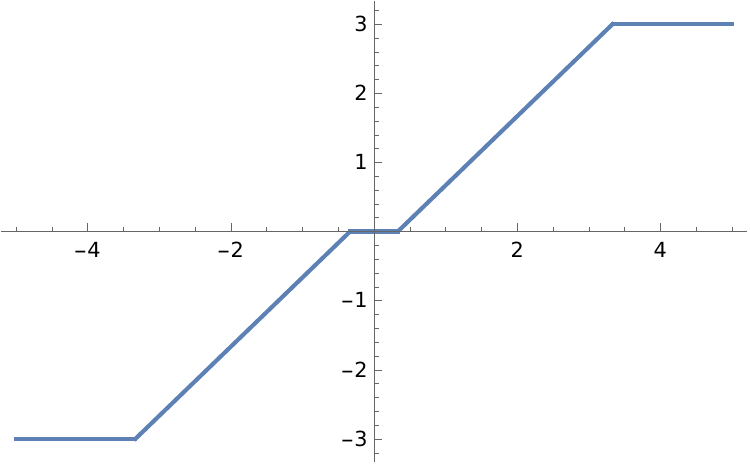}
    \caption{Plot of $T_{1/3}(x)$.}
    \label{fig:schillinguemuraaux}
\end{figure}

\begin{lemma}[{\cite[Lemma~2.3]{SchillingUemura2012}}]
  \label{sutruncation}
  For $\epsilon>0$, the truncation $T_\epsilon(x)$ in \eqref{eq:sutruncation} is a normal contraction, i.e.,
  \begin{align*}
    |T_\epsilon(x)| \leq |x|
    \quad \text{and} \quad|T_\epsilon(x)-T_\epsilon(y)| \leq |x-y|,
    \quad x,y\in\R.
  \end{align*}
  Moreover, for any $x\in\R$, we have $\lim_{\epsilon\to0}T_\epsilon(x)=x$.
\end{lemma}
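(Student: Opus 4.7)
The plan is to first unfold the nested lattice operations to obtain an explicit closed form for $T_\epsilon$, and then verify all three properties by inspection.

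Concretely, I would set $g_\epsilon(x) := (-\epsilon)\vee x \wedge\epsilon$, which (irrespective of whether one reads the operators left-to-right or $\wedge$ with higher precedence than $\vee$) is the orthogonal projection of $x$ onto $[-\epsilon,\epsilon]$. A short case distinction over $x\leq -\epsilon$, $|x|\leq\epsilon$, and $x\geq\epsilon$ yields $x - g_\epsilon(x) = \sgn(x)\,(|x|-\epsilon)_+$, from which I would extract the closed form
\[
T_\epsilon(x) = \sgn(x)\bigl[(|x|-\epsilon)_+\wedge \epsilon^{-1}\bigr], \quad x\in\R,
\]
after noting that the outer $(-\epsilon^{-1})\vee\cdot\wedge\epsilon^{-1}$ is again a projection onto $[-\epsilon^{-1},\epsilon^{-1}]$. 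This displays $T_\epsilon$ as an odd, continuous, piecewise-affine function: it vanishes on $[-\epsilon,\epsilon]$, is affine with slope one on $[\epsilon,\epsilon+\epsilon^{-1}]$ and on $[-\epsilon-\epsilon^{-1},-\epsilon]$, and is constant equal to $\pm\epsilon^{-1}$ outside $[-\epsilon-\epsilon^{-1},\,\epsilon+\epsilon^{-1}]$; in particular $T_\epsilon$ is absolutely continuous with $T_\epsilon' \in \{0,1\}$ almost everywhere.

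From this closed form all three claims follow immediately. For the Lipschitz estimate $|T_\epsilon(x)-T_\epsilon(y)|\leq |x-y|$, I would invoke the fundamental theorem of calculus together with $|T_\epsilon'|\leq 1$ a.e. The pointwise bound $|T_\epsilon(x)|\leq |x|$ follows from oddness and from $(|x|-\epsilon)_+\wedge \epsilon^{-1}\leq |x|$ on $\R_+$. For the convergence $T_\epsilon(x)\to x$ as $\epsilon\to 0$, the identity $T_\epsilon(0)=0$ handles $x=0$, while for $x\neq 0$ any $\epsilon$ small enough to satisfy $\epsilon<|x|<\epsilon+\epsilon^{-1}$ (eventually true) places $x$ in an affine piece of slope one, where the closed form reduces to $T_\epsilon(x)=x-\sgn(x)\epsilon$, which tends to $x$.

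There is no genuine obstacle here; the only thing requiring care is parsing the operator precedence of the nested lattice operations $\vee$ and $\wedge$ in the definition of $T_\epsilon$, which is why I would first reduce the formula to the compact representation above before starting the verification.
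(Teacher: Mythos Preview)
Your argument is correct: the closed form $T_\epsilon(x)=\sgn(x)\bigl[(|x|-\epsilon)_+\wedge\epsilon^{-1}\bigr]$ is exactly right, and from it the three claims follow as you say. The paper does not prove this lemma at all; it is simply quoted from \cite[Lemma~2.3]{SchillingUemura2012}, so there is nothing to compare against---you have supplied a clean self-contained verification where the paper defers to the literature.
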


We now show that $C_c^\infty(\R_+)$ is dense in $\D(\ce_{\zeta})$.

\begin{lemma}
  \label{besseldomainalpha}
  Let $\zeta\in(-1/2,\infty)$ and $\alpha\in(0,2\wedge(2\zeta+1))$. Then $C_c^\infty(\R_+)$ is dense in $\D(\ce_\zeta)$ with respect to $\sqrt{\ce_\zeta[\cdot]}+\|\cdot\|_{L^2(\R_+,r^{2\zeta}dr)}$.
\end{lemma}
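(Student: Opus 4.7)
The plan is to approximate any $f\in\D(\ce_\zeta)$ in the form norm $\sqrt{\ce_\zeta[\cdot]}+\|\cdot\|_{L^2(\R_+,r^{2\zeta}dr)}$ by $C_c^\infty(\R_+)$ functions in two stages: first truncate $f$ to compact support in $\R_+$ via a logarithmic-scale cutoff, and then mollify with \eqref{eq:defapproximation} and extract a strongly convergent subsequence via Mazur's lemma. Concretely, fix $\eta_0\in C_c^\infty(\R)$ with $\eta_0\equiv1$ on $[-1/2,1/2]$ and $\supp\eta_0\subseteq[-1,1]$, and set $\chi_R(r):=\eta_0(\log r/\log R)$ for $R>1$. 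Then $\chi_R\in C_c^\infty(\R_+)$, $\supp\chi_R\subseteq[R^{-1},R]$, $\chi_R\equiv1$ on $[R^{-1/2},R^{1/2}]$, and direct integration yields $|\chi_R(r)-\chi_R(s)|\leq\|\eta_0'\|_\infty|\log(r/s)|/\log R$.

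The first step is to show $f\chi_R\to f$ in the form norm as $R\to\infty$. The $L^2$-convergence is immediate by dominated convergence. For the form part, writing $X=f$, $Y=1-\chi_R$ and applying the symmetric product identity
\[
XY(r)-XY(s)=\tfrac12(X(r)+X(s))(Y(r)-Y(s))+\tfrac12(X(r)-X(s))(Y(r)+Y(s))
\]
splits $\ce_\zeta[f-f\chi_R]$ into a piece involving $(1-\chi_R(\cdot))^2|f(r)-f(s)|^2$, which vanishes by dominated convergence (dominated by $|f(r)-f(s)|^2\nu_\zeta(r,s)(rs)^{2\zeta}$, integrable since $\ce_\zeta[f]<\infty$), and a piece involving $f(s)^2|\chi_R(r)-\chi_R(s)|^2$. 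The logarithmic bound combined with the dilation $r=us$ reduces this second piece to $(\log R)^{-2}\cdot\int_0^\infty f(s)^2 s^{2\zeta-\alpha}\,ds\cdot \int_0^\infty u^{2\zeta}(\log u)^2(u+1)^{-2\zeta}|u-1|^{-1-\alpha}\,du$, where the $u$-integral is finite by elementary analysis at $u=0,1,\infty$, using $\zeta>-1/2$ and $\alpha\in(0,2)$. The sharp Hardy inequality implicit in \eqref{eq:hardyremainderagaintransformedlimit} with $\eta=(2\zeta+1-\alpha)/2$ bounds $\int f^2 s^{2\zeta-\alpha}\,ds$ by a constant multiple of $\ce_\zeta[f]$, giving the overall estimate $\lesssim\ce_\zeta[f]/(\log R)^2\to0$.

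For the smoothing step, since $f\chi_R$ has compact support in $\R_+$, its mollification $(f\chi_R)_\epsilon$ from \eqref{eq:defapproximation} lies in $C_c^\infty(\R_+)$ for every $\epsilon\in(0,1)$: its support is contained in $[R^{-1}(1-\epsilon/2),R(1+\epsilon/2)]$, and smoothness follows by differentiating under the integral sign. Lemma~\ref{approximation} gives $(f\chi_R)_\epsilon\to f\chi_R$ in $L^2(\R_+,r^{2\zeta}dr)$ as $\epsilon\to0$, and Lemma~\ref{boglem52} taken with its parameter $\eta=0$ (so $h\equiv1$) yields $\ce_\zeta[(f\chi_R)_\epsilon]\lesssim\ce_\zeta[f\chi_R]$. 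Thus $\{(f\chi_R)_\epsilon\}_{\epsilon\in(0,1)}$ is bounded in the Hilbert space $(\D(\ce_\zeta),\sqrt{\ce_\zeta[\cdot]}+\|\cdot\|_{L^2(\R_+,r^{2\zeta}dr)})$ and converges to $f\chi_R$ in $L^2$. Extracting a weakly convergent subsequence, whose weak limit must coincide with the $L^2$-limit $f\chi_R$, and applying Mazur's lemma produces a sequence of convex combinations in $C_c^\infty(\R_+)$ converging strongly in the form norm to $f\chi_R$; combining with the cutoff step establishes the claimed density.

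The main obstacle is controlling the derivative-of-cutoff contribution $\int\!\!\int f(s)^2|\chi_R(r)-\chi_R(s)|^2\nu_\zeta(r,s)(rs)^{2\zeta}\,dr\,ds$. A naive linear cutoff at scale $R$ with $\|\chi_R'\|_\infty\sim1/R$ fails, because $\int_0^\infty\nu_\zeta(r,s)r^{2\zeta}\,dr$ diverges; the key observation is that $\nu_\zeta(r,s)(rs)^{2\zeta}$ transforms in a homogeneous way under the dilation $(r,s)\mapsto(\lambda r,\lambda s)$, which singles out the logarithmic cutoff as the natural choice and, together with Hardy's inequality, produces the $(\log R)^{-2}$ decay needed to close the argument.
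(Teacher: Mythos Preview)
Your proof is correct and takes a genuinely different route from the paper's. The paper follows the Schilling--Uemura template: given $f\in\D(\ce_\zeta)$, it forms $w_\epsilon:=\cj_\epsilon[T_\epsilon(\cj_\epsilon[f])]$, where $\cj_\epsilon$ is the multiplicative mollifier~\eqref{eq:defapproximation} and $T_\epsilon$ is the value truncation~\eqref{eq:sutruncation}; compact support of $w_\epsilon$ comes from $T_\epsilon$ killing values below $\epsilon$ (after observing that $\cj_\epsilon[f]$ decays), the uniform form bound comes from $T_\epsilon$ being a normal contraction together with Lemma~\ref{boglem52}, and one closes with Banach--Saks. You instead cut off in \emph{space} via the logarithmic-scale function $\chi_R$, control the commutator piece $\iint f(s)^2|\chi_R(r)-\chi_R(s)|^2\nu_\zeta(r,s)(rs)^{2\zeta}\,dr\,ds$ through the dilation $r=us$ and the Hardy inequality~\eqref{eq:hardyremainderagaintransformedlimit} (with $\eta=(2\zeta+1-\alpha)/2$), and only then mollify the already compactly supported $f\chi_R$. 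The paper's argument exploits the Markov (normal-contraction) property of the Dirichlet form and is closer to a template for general scale-bounded kernels; your argument dispenses with $T_\epsilon$ entirely but leans on the sharp Hardy inequality, a feature specific to the present setting that is already available. Both finish with a weak-compactness-plus-convex-combinations step (Banach--Saks versus Mazur), which is essentially the same device.
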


For $\alpha=2$, Lemma~\ref{besseldomainalpha} is proved, e.g., in \cite[Theorem~4.22]{Bruneauetal2011} or \cite[Theorem~8.4]{DerezinskiGeorgescu2021}. To ease the notation in the following, define $\F$ to be the closure of $C_c^\infty(\R_+)$ with respect to the norm defined by
\begin{align}
  \sqrt{\ce_\zeta^{(1)}[f]} := \sqrt{\ce_\zeta[f]}+\|f\|_{L^2(\R_+,r^{2\zeta}dr)}, \quad f \in \D(\ce_\zeta).
\end{align}

\begin{proof}[Proof of Lemma~\ref{besseldomainalpha}]
  The proof proceeds analogously to that of Schilling and Uemura \cite[Theorem~2.4]{SchillingUemura2012}. 
  Note that Schilling and Uemura consider Dirichlet forms with some translation symmetry\footnote{This is encoded in the notion of shift-bounded kernels \cite[Definition~2.1]{SchillingUemura2012}.}, which is absent in the present situation.
  To show that every element from $\D(\ce_\zeta)$ can be approximated by a sequence of functions in $C_c^\infty(\R_+)$ with respect to $\ce_\zeta^{(1)}$, let $f\in\D(\ce_\zeta)$. Without loss of generality, let $f$ be real-valued. We perform the following proof for real and imaginary part of $f$ separately, using the linearity of the statement. We define the mollification $\cj_\epsilon[f]:=f_\epsilon$ via \eqref{eq:defapproximation} and note that Schilling and Uemura use the usual convolution instead.
  As observed after \eqref{eq:defapproximation} and in Lemma~\ref{approximation}, $f_\epsilon\in C^\infty(\R_+)$ vanishes at infinity. Thus, $T_\epsilon(f_\epsilon)$ has compact support so $w_\epsilon:=\cj_\epsilon[T_\epsilon(\cj_\epsilon[f])]\in C_c^\infty(\R_+)\subseteq\F$. Moreover, again by Lemma~\ref{approximation}, we have $\lim_{\epsilon\to0}w_\epsilon=f$ in $L^2(\R_+,r^{2\zeta}dr)$. Thus, by Lemma~\ref{boglem52} with $\eta=0$,
  \begin{align*}
    \ce_\zeta[w_\epsilon] \lesssim \ce_\zeta[T_\epsilon(\cj_\epsilon[f])] + \|T_\epsilon(\cj_\epsilon[f])\|_{L^2(\R_+,r^{2\zeta}dr)}^2.
  \end{align*}
  Since $T_\epsilon$ is a normal contraction by Lemma~\ref{sutruncation}, we have
  \begin{align*}
    \ce_\zeta[T_\epsilon(\cj_\epsilon[f])] \leq \ce_\zeta[\cj_\epsilon[f]]
  \end{align*}
  and
  \begin{align*}
    \|T_\epsilon(\cj_\epsilon[f])\|_{L^2(\R_+,r^{2\zeta}dr)}
    \leq \|\cj_\epsilon[f]\|_{L^2(\R_+,r^{2\zeta}dr)}
    \leq \|f\|_{L^2(\R_+,r^{2\zeta}dr)}.
  \end{align*}
  A further application of Lemma~\ref{boglem52} shows
  \begin{align*}
    \ce_\zeta[w_\epsilon]
    \lesssim \ce_\zeta[f] + \|f\|_{L^2(\R_+,r^{2\zeta}dr)}^2,
  \end{align*}
  i.e., the family $\{\ce_\zeta[w_\epsilon]\}_{\epsilon>0}$, hence $\{\ce_\zeta^{(1)}[w_\epsilon]\}_{\epsilon>0}$, is uniformly bounded. Therefore, we can use the Banach--Saks theorem to deduce that for a subsequence $\{\epsilon(n)\}_{n\in\N}$ with $\lim_{n\to\infty}\epsilon(n)=0$ the Cesàro means
  \begin{align*}
    \frac1n \sum_{k=1}^n w_{\epsilon(k)} \in C_c^\infty(\R_+)
  \end{align*}
  converge to a function $\tilde f\in\F$ with respect to $\sqrt{\ce_\zeta^{(1)}}$ and, in particular, in $L^2(\R_+,r^{2\zeta}\,dr)$. On the other hand, we know that $w_\epsilon$, hence any subsequence and any convex combination thereof, converges to $f\in L^2(\R_+,r^{2\zeta}dr)$ as $\epsilon\to0$. Since $L^2$-limits are unique, we conclude $f=\tilde f\in \F$, so $f$ can be approximated by functions in $C_c^\infty(\R_+)$.
\end{proof}

\subsubsection{Proof of equality of $\ci_{\zeta,\eta}$ and $\ce_{\zeta,\eta}$ for $\eta>0$}

We are now ready to show that $C_c^\infty(\R_+)$ is a form core of $\D(\ci_{\zeta,\eta})$.
\begin{proposition}
  \label{smoothdensehardy}
  Let $\zeta\in(-1/2,\infty)$, $\alpha\in(0,2\wedge(2\zeta+1))$, $\eta\in[0,(2\zeta+1-\alpha)/2]$.
  Then $C_c^\infty(\R_+)$ is dense in $\D(\ci_{\zeta,\eta})$.
\end{proposition}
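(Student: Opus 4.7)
The case $\eta=0$ reduces to Lemma~\ref{besseldomainalpha} since then $h\equiv 1$ and $\ci_{\zeta,0}=\ce_\zeta$, so I assume throughout that $\eta\in(0,(2\zeta+1-\alpha)/2]$. My plan is to mimic, in the weighted setting, the Schilling--Uemura-style construction used to prove Lemma~\ref{besseldomainalpha}. Given $v\in\D(\ci_{\zeta,\eta})$, I write $v=hf$ with $f:=v/h=r^\eta v$, so that the form becomes
\[
\ci_{\zeta,\eta}[v] = \tfrac12\iint_{\R_+^2}\nu_\zeta(r,s)\,|f(r)-f(s)|^2\,h(r)h(s)(rs)^{2\zeta}\,dr\,ds,
\]
and all approximation operations will be applied to $f$ rather than to $v$. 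For $\epsilon\in(0,1)$ I propose to set
\[
w_\epsilon:=\cj_\epsilon\bigl[\chi(\cdot/\epsilon)\cdot T_\epsilon(\cj_\epsilon[f])\bigr],
\]
where $\cj_\epsilon$ is the mollification from \eqref{eq:defapproximation}, $T_\epsilon$ is the normal-contraction truncation from \eqref{eq:sutruncation}, and $\chi$ is the smooth spatial cutoff from Lemma~\ref{boglem53}. The four successive operations are designed so that $w_\epsilon$ is smooth (outer mollification), its support is bounded at infinity (because $\cj_\epsilon[f]\to 0$ at infinity by Lemma~\ref{approximation}(2), so $T_\epsilon$ kills it outside a large set), and its support is bounded away from the origin (thanks to the factor $\chi(\cdot/\epsilon)$). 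Since $h$ is smooth and nonvanishing on $\R_+$, this makes $hw_\epsilon\in C_c^\infty(\R_+)$.

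What remains is to verify (a) the $L^2$-convergence $hw_\epsilon\to v$ in $L^2(\R_+,r^{2\zeta}dr)$, equivalently $w_\epsilon\to f$ in $L^2(\R_+,r^{2\zeta-2\eta}dr)$; and (b) a uniform form bound $\ci_{\zeta,\eta}[hw_\epsilon]\le C(v)$ independent of $\epsilon$. For (a) I chain Lemma~\ref{approximation}(2) (applied in the weight $r^{2\zeta-2\eta}$), the pointwise limit $T_\epsilon(x)\to x$ from Lemma~\ref{sutruncation}, the pointwise limit $\chi(r/\epsilon)\to 1$ for every $r>0$, and dominated convergence, using the dominations $|T_\epsilon(x)|\le|x|$ and $|\chi(r/\epsilon)|\le 1$. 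For (b) I chain estimates for each operation: Lemma~\ref{boglem52} for each of the two mollifications; the normal-contraction property of $T_\epsilon$ for the truncation step, which pointwise decreases the integrand defining $\ci_{\zeta,\eta}[h\cdot]$; and Lemma~\ref{boglem53} for the spatial cutoff. Once (a) and (b) are established, I invoke the Banach--Saks theorem exactly as in the proof of Lemma~\ref{besseldomainalpha}: a subsequence of $\{hw_\epsilon\}$ has Ces\`aro means that are $C_c^\infty(\R_+)$-functions and converge in the norm $\sqrt{\ci_{\zeta,\eta}[\cdot]}+\|\cdot\|_{L^2(\R_+,r^{2\zeta}dr)}$ to some element $\tilde v$ of the closure of $C_c^\infty(\R_+)$, and since the same Ces\`aro means also converge in $L^2$ to $v$, uniqueness of limits forces $\tilde v=v$.

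The principal technical obstacle is step (b), specifically the application of Lemma~\ref{boglem53} to $T_\epsilon(\cj_\epsilon[f])$, which is only bounded Lipschitz rather than a $C_c^\infty(\R_+)$-function, and whose $L^\infty$-norm and Lipschitz constant scale unfavourably with $\epsilon$ through the inner mollification and truncation. To keep the estimate uniform in $\epsilon$ I plan either to revisit the proof of Lemma~\ref{boglem53} and bound $|f^{(\epsilon)}(r)-f^{(\epsilon)}(s)|^2$ on the near-origin region $B_{2\epsilon}\times B_{2\epsilon}$ via the alternative splitting $|f^{(\epsilon)}(r)-f^{(\epsilon)}(s)|^2\le 2\chi(r/\epsilon)^2|f(r)-f(s)|^2+2|f(s)|^2|\chi(r/\epsilon)-\chi(s/\epsilon)|^2$, which avoids $\|f'\|_\infty$ altogether in favour of a contribution controllable by $\ci_{\zeta,\eta}[hf]$ and a weighted $L^2$-norm; or, failing that, to apply the spatial cutoff at a scale $\delta$ independent of $\epsilon$, send $\epsilon\to 0$ first for fixed $\delta$, and only then let $\delta\to 0$ via a diagonalisation argument.
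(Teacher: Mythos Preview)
Your single-family construction $w_\epsilon=\cj_\epsilon[\chi(\cdot/\epsilon)\,T_\epsilon(\cj_\epsilon[f])]$ does produce $C_c^\infty(\R_+)$-functions with the right $L^2$-limit, but the obstacle you flag is genuine and neither of your fixes closes it at the critical exponent $\eta=(2\zeta+1-\alpha)/2$. When you bound the cutoff error via the splitting $|f\chi(r/\epsilon)-f\chi(s/\epsilon)|^2\le 2|f(r)-f(s)|^2+2|f(s)|^2|\chi(r/\epsilon)-\chi(s/\epsilon)|^2$, the second term contributes (after the $r$-integration and scaling) essentially the Hardy integral $\int|f(s)|^2 s^{2\zeta-2\eta-\alpha}\,ds=\int|v|^2 r^{2\zeta-\alpha}\,dr$. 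For subcritical $\eta$ this is controlled by $\ci_{\zeta,\eta}[v]$ via the Hardy inequality, but at criticality it is not, and replacing $f$ by $T_\epsilon(\cj_\epsilon[f])$ only gives the crude bound $\|T_\epsilon(\cdot)\|_\infty\le 1/\epsilon$, which makes the estimate blow up like $\epsilon^{-2}$ (check: the exponent $2\zeta-2\eta+1-\alpha$ in \eqref{e.BeBe}--\eqref{e.BeBe2} vanishes at criticality). Decoupling the cutoff scale $\delta$ from the mollification scale $\epsilon$ does not help, because the same Hardy-type term appears with a $\delta^{-\alpha}$ prefactor.

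The paper's proof avoids this by separating the two approximation tasks. First, mollification plus Banach--Saks (only Lemma~\ref{boglem52} is needed) reduces to $f\in C^\infty(\R_+)\cap\D(\cq_\eta)$. Then one splits $f=f_1+f_2$ at a \emph{fixed} scale with $f_1\in C_c^\infty(\R_+)$ and $f_2$ supported in $B_1^c$. The key step you are missing is that for such $f_2$ the Hardy integral $\int|h f_2|^2 r^{2\zeta-\alpha}\,dr$ is automatically finite (the weight $r^{-\alpha}$ is bounded on the support), so the ground state representation \eqref{eq:hardyremainderagaintransformedlimit} places $g=h f_2$ in $\D(\ce_\zeta)$. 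Now Lemma~\ref{besseldomainalpha} (the $\eta=0$ case) furnishes $g_n\in C_c^\infty(B_{1/2}^c)$ approximating $g$ in $\ce_\zeta$-norm, and since $\ci_{\zeta,\eta}\le\ce_\zeta$, the functions $g_n/h\in C_c^\infty(\R_+)$ approximate $f_2$ in $\cq_\eta$-norm. This reduction to the unperturbed form, rather than a direct attack on the cutoff near the origin, is what makes the critical case go through.
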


\begin{proof}
  For $\eta=0$, we proved the claim in Lemma~\ref{besseldomainalpha}. Thus, let $\eta>0$.
  When $\eta\in(0,(2\zeta+1-\alpha)/2)$, the claim follows from Hardy's inequality since the Hardy potential can be absorbed by $\ce_{\zeta}$ in this case, i.e.,
  $$
  \left(1-\frac{\Psi_\zeta(\eta)}{\Psi_\zeta((2\zeta+1-\alpha)/2)}\right)\ce_\zeta[u]
  \leq \ci_{\zeta,\eta}[u]
  \leq \ce_{\zeta}[u].
  $$
  Thus, the form norms generated by $\ci_{\zeta,\eta}$ and $\ce_\zeta$ are equivalent and the claim follows from the density of $C_c^\infty(\R_+)$ in $\D(\ce_\zeta)$.

  We now give an argument covering all $\eta\in[0,(2\zeta+1-\alpha)/2]$.
  Observe that $2\zeta-\eta>-1$ and $2(\zeta-\eta)>-1$, and define the form
  \begin{align}
    \begin{split}
      \cq_\eta[f] & := \ci_{\zeta,\eta}[h_{\eta}f],\quad f\in L^2(\R_+,r^{2\zeta}h_{\eta}(r)^2\,dr), \\
      \D(\cq_\eta) & := \frac{1}{h_{\eta}} \D(\ci_{\zeta,\eta}) := \{f/h_{\eta}:\,f\in\D(\ci_{\zeta,\eta})\}.
    \end{split}
  \end{align}
  Thus, $\D(\ci_{\zeta,\eta})=h_{\eta}\D(\cq_\eta)$. Now let $f\in\D(\cq_{\eta})$. By Lemma~\ref{boglem52}, the sequence $f_n(r):=\int_0^\infty dz\, \rho_{1/n}(z)f(r/z)$, $r>0$, is bounded in the norm $\sqrt{\cq_{\eta}[\cdot]} + \|\cdot\|_{L^2(\R_+,r^{2\zeta}h_{\eta}(r)^2 dr)}$.
  Since $\D(\cq_\eta)$ is a Hilbert space, by the Banach--Saks theorem, there exists a subsequence $\{f_{n_k}\}$ and $g\in\D(\cq_{\eta})$ such that
  \begin{align}
    \frac 1k \sum_{m=1}^k f_{n_m} \to g.
  \end{align}
  Since $f_n\to f$ in $L^2(\R_+,r^{2\zeta}h_{\eta}(r)^2\,dr)$, we have $g=f$. Thus, $C^\infty(\R_+)\cap\D(\cq_{\eta})$ is dense in $\D(\cq_{\eta})$.

  Now fix $f\in C^\infty(\R_+)\cap\D(\cq_\eta)$. Let $\chi_1\in C_c^\infty(\R_+)$ be such that $\one_{B_1}\leq\chi_1\leq\one_{B_2}$. Denote $f_1=f\chi_1$ and $f_2=f-f_1$. Then, $f=f_1+f_2$. By \eqref{e.BeBe}--\eqref{e.BeBe2} in the proof of Lemma~\ref{boglem53}, $C_c^\infty(\R_+)\subseteq\D(\cq_\eta)$. Hence $f_1\in\D(\cq_\eta)$ and so $f_2\in\D(\cq_\eta)$. Since $g=h_{\eta}f_2\in\D(\ci_{\zeta,\eta})$ and
  \begin{align}
    \int_{\R_+}|g(r)|^2 q(r)\,r^{2\zeta}\,dr
    \leq \Psi_\zeta(\eta)\int_{B_1^c}|f(r)|^2 h_{\eta}(r)^2 r^{2\zeta}\,dr
    < \infty,
  \end{align}
  we get $g\in\D(\ce_\zeta)$ by the ground state representation~\eqref{eq:hardyremainderagaintransformedlimit}. By Lemma~\ref{besseldomainalpha}, $\ce_\zeta$ with $\alpha<2\zeta+1$ is a regular Dirichlet form with core $C_c^\infty(\R_+)$.
  Thus, there is a sequence $(g_n)_{n\in\N}\subseteq C_c^\infty(B_{1/2}^c)$, which converges to $g$ in $\sqrt{\ce_\zeta[\cdot]}+\|\cdot\|_{L^2(\R_+,r^{2\zeta}dr)}$ as $n\to\infty$. Hence, the sequence $w_n:=g_n/h_{\eta}\in C_c^\infty(\R_+\setminus B_{1/2})$ converges to $f_2$ in the norm $\sqrt{\cq_\eta[\cdot]}+\|\cdot\|_{L^2(\R_+,r^{2\zeta}h_{\eta}(r)^2\,dr)}$ as $n\to\infty$. Finally, a sequence $f_1+w_n\in C_c^\infty(\R_+)$ converges in $\D(\cq_\eta)$ to $f$ as $n\to\infty$. This means that $C_c^\infty(\R_+)$ is dense in $\D(\cq_\eta)$ with respect to the norm $\sqrt{\cq_\eta[\cdot]} + \|\cdot\|_{L^2(\R_+,r^{2\zeta}h_{\eta}(r)^2\,dr)}$.
\end{proof}

\subsubsection{Proof of Theorem~\ref{bogthm51}}

For $\eta=0$, we proved the claim in Lemma~\ref{besseldomainalpha}. Thus, let $\eta>0$.
By Theorem~\ref{propertiesschrodheatkernel}(6) and \cite[Lemmas~1.3.2 and~1.3.4 and Theorem~1.3.1]{Fukushimaetal2011} we obtain that $(\ce_{\zeta,\eta},\D(\ce_{\zeta,\eta}))$ is closed, i.e., that $(\D(\ce_{\zeta,\eta}),\sqrt{\ce_{\zeta,\eta}[\cdot]}+\|\cdot\|_{L^2(\R_+,r^{2\zeta}dr)})$ is a Hilbert space.
Next, we consider $\ci_{\zeta,\eta}$ defined by \eqref{eq:defizetaeta} and claim that also $(\D(\ci_{\zeta,\eta}),\sqrt{\ci_{\zeta,\eta}[\cdot]}+\|\cdot\|_{L^2(\R_+,r^{2\zeta}dr)})$ is a Hilbert space. To that end let $f_n$ be a Cauchy sequence in $(\D(\ci_{\zeta,\eta}),\sqrt{\ci_{\zeta,\eta}[\cdot]}+\|\cdot\|_{L^2(\R_+,r^{2\zeta}dr)})$. Since $f_n$ is bounded in this norm and also clearly a Cauchy sequence in $L^2(\R_+,r^{2\zeta}dr)$, it converges to some $f$ in $L^2(\R_+,r^{2\zeta}dr)$. Moreover, there is a subsequence $f_{n_k}$ which converges pointwise (almost everywhere) to this $f$. By Fatou's lemma, we obtain
\begin{align}
  \ci_{\zeta,\eta}[f] \leq \liminf_{k\to\infty}\ci_{\zeta,\eta}[f_{n_k}] <\infty,
\end{align}
so we have $f\in\D(\ci_{\zeta,\eta})$. Since $f_n$ is Cauchy, we obtain, by Fatou's lemma,
\begin{align}
  \lim_{n\to\infty}\ci_{\zeta,\eta}[f-f_n]
  \leq \lim_{n\to\infty}\liminf_{k\to\infty} \ci_{\zeta,\eta}[f_{n_k}-f_n] = 0,
\end{align}
so $f_n$ also converges to $f$ in $(\D(\ci_{\zeta,\eta}),\sqrt{\ci_{\zeta,\eta}[\cdot]} + \|\cdot\|_{L^2(\R_+,r^{2\zeta}dr)})$. This shows that $(\D(\ci_{\zeta,\eta}),\sqrt{\ci_{\zeta,\eta}[\cdot]}+\|\cdot\|_{L^2(\R_+,r^{2\zeta}dr)})$ is a Hilbert space.
By Proposition~\ref{smoothdensehardy}, $C_c^\infty(\R_+)$ is dense in $\D(\ci_{\zeta,\eta})$ with the norm $\sqrt{\ci_{\zeta,\eta}[\cdot]}+\|\cdot\|_{L^2(\R_+,r^{2\zeta}dr)}$.
By Lemma~\ref{boglem51}, $\ce_{\zeta,\eta}$ and $\ci_{\zeta,\eta}$ coincide on $C_c^\infty(\R_+)$. Since $\D(\ce_{\zeta,\eta})\subseteq\D(\ci_{\zeta,\eta})$ (by Lemma~\ref{boglem51}) and both spaces are complete, they are equal. \qed

\subsection{Equality of $\ci_{\zeta,\eta}$ and $\ce_{\zeta,\eta}$ for $\eta\in(-\alpha,0)$}
\label{s:connectiongeneratorsjw}

In this subsection, we prove Theorem~\ref{relationhardyformheatkernel}(2) for $\alpha<2$ and $\eta\in(-\alpha,0)$.
Let $\zeta\in(-1/2,\infty)$, $\alpha\in(0,2\wedge(2\zeta+1))$, $\eta\in(-\alpha,0)$.

\subsubsection{Ground state representation for $\eta<0$}
We start with a ground state representation for the form sum $\ce_\zeta[u]-\Psi_\zeta(\eta)\int_0^\infty \frac{|u(r)|^2}{r^\alpha}\,r^{2\zeta}dr$ with $\eta\in(-\alpha,0)$. Note that, due to our construction of ground states for $\eta<0$ in \cite{Bogdanetal2024} using compensation, the results of \cite{BogdanMerz2024}, where $\eta>0$ is only considered and where no compensation is necessary, are not applicable in the present situation.

\begin{lemma}
  \label{gsreprrepulsive}
  Let $\zeta\in(-1/2,\infty)$, $\alpha\in(0,2\wedge(2\zeta+1))$, and $\eta\in(-\alpha,0)$. Then, for $u\in L^2(\R_+,r^{2\zeta}dr)$,
  \begin{align}
    \ce_\zeta[u]
    & = \ci_{\zeta,\eta}[u] + \Psi_\zeta(\eta)\int_{\R_+} \frac{|u(r)|^2}{r^\alpha}\,r^{2\zeta}\,dr.
  \end{align}
\end{lemma}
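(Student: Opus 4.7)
The plan is to establish the ground state representation by direct computation based on the pointwise algebraic identity
\begin{align*}
  |u(r)-u(s)|^2
  & = h(r)h(s)\left|\frac{u(r)}{h(r)} - \frac{u(s)}{h(s)}\right|^2 \\
  & \quad + |u(r)|^2\bigl(1 - h(s)/h(r)\bigr) + |u(s)|^2\bigl(1 - h(r)/h(s)\bigr),
\end{align*}
which holds for all $r,s>0$ (and in particular for complex-valued $u$, handled by separating real and imaginary parts). I would first work with $u\in C_c^\infty(\R_+)$. Multiplying the identity by $r^{2\zeta}s^{2\zeta}\nu_\zeta(r,s)$ and integrating over $(0,\infty)\times(0,\infty)$ produces $\ce_\zeta[u]$ on the left and, on the right, $\ci_{\zeta,\eta}[u]$ from the first term plus, after using the $(r,s)\leftrightarrow(s,r)$ symmetry of $r^{2\zeta}s^{2\zeta}\nu_\zeta(r,s)$ to merge the last two terms, the single expression $\int_0^\infty r^{2\zeta}|u(r)|^2 A(r)\,dr$, where
\[
  A(r) := \int_0^\infty s^{2\zeta}\nu_\zeta(r,s)\bigl(1-h(s)/h(r)\bigr)\,ds.
\]

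The identification $A(r)=\Psi_\zeta(\eta)/r^\alpha$ is equivalent to the pointwise identity $\int_0^\infty s^{2\zeta}\nu_\zeta(r,s)(h(r)-h(s))\,ds=\Psi_\zeta(\eta)h(r)/r^\alpha$, which is precisely the generalized ground state property of $h(r)=r^{-\eta}$ recalled after \eqref{eq:defh} and established in \cite{Bogdanetal2024} (with the required compensation). Equivalently, the scaling $\nu_\zeta(\lambda r,\lambda s)=\lambda^{-(2\zeta+1+\alpha)}\nu_\zeta(r,s)$ together with $h(\lambda r)=\lambda^{-\eta}h(r)$ reduces the claim, via the substitution $s=rt$, to evaluating the constant $A(1)=\int_0^\infty t^{2\zeta}\nu_\zeta(1,t)(1-t^{-\eta})\,dt$, which equals $\Psi_\zeta(\eta)$ by an explicit Mellin/Beta-function computation using the representation \eqref{eq:defnuell1} of $\nu_\zeta$.

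The main obstacle is to make these formal manipulations rigorous for $\eta\in(-\alpha,0)$. In that regime $h(s)=s^{-\eta}$ grows at infinity and, more seriously, the compensation $1-h(s)/h(r)=\eta(s-r)/r+O((s-r)^2)$ near the diagonal reduces the kernel singularity $\nu_\zeta(r,s)\sim|r-s|^{-1-\alpha}$ only to order $|r-s|^{-\alpha}$, which is not locally integrable when $\alpha\in[1,2)$. Following the compensation strategy of \cite{Bogdanetal2024}, I would interpret both $A(r)$ and the underlying double integral as principal values $\lim_{\varepsilon\searrow 0}\int_{|s-r|>\varepsilon r}$: the local symmetry of $s^{2\zeta}\nu_\zeta(r,s)$ about $s=r$ cancels the leading linear part of $1-h(s)/h(r)$, so the limit exists. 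Away from the diagonal the integrand is absolutely integrable thanks to the hypotheses $\eta>-\alpha$ (controlling $s\to\infty$) and $\eta<2\zeta+1$ together with $\zeta>-1/2$ (controlling $s\to 0^+$); this, with the compact support of $u$, justifies Fubini on the truncated double integral and the passage $\varepsilon\searrow 0$ under the outer integration. Finally, the identity is extended from $u\in C_c^\infty(\R_+)$ to arbitrary $u\in L^2(\R_+,r^{2\zeta}dr)$ by the density of $C_c^\infty(\R_+)$ in $\D(\ce_\zeta)$ (Lemma~\ref{besseldomainalpha}), Fatou's lemma applied to the nonnegative forms $\ce_\zeta$ and $\ci_{\zeta,\eta}$, and dominated convergence for the $r^{-\alpha}$ Hardy term (which has constant sign once $\Psi_\zeta(\eta)<0$ is factored out).
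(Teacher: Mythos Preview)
Your approach via the pointwise algebraic identity and principal-value regularization is correct in outline and represents a genuinely different route from the paper's proof. The paper instead regularizes with the heat kernel: it writes $\ce_\zeta[hu]-\ci_{\zeta,\eta}[hu]$ via the semigroup forms $t^{-1}\langle\cdot,(1-p_\zeta^{(\alpha)}(t))\cdot\rangle$, invokes the integrated identity of Lemma~\ref{jaklem23} from \cite{Bogdanetal2024} (namely $\int s^{2\zeta}p_\zeta^{(\alpha)}(t,r,s)s^{-\eta}ds = r^{-\eta} - \Psi_\zeta(\eta)\int_0^t\int s^{2\zeta}p_\zeta^{(\alpha)}(\tau,r,s)s^{-\eta-\alpha}ds\,d\tau$), and then passes to $t\to0$ using the approximation-of-identity property of $p_\zeta^{(\alpha)}$. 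This has two practical advantages over your route: the ``compensation'' is already packaged inside Lemma~\ref{jaklem23}, so no separate principal-value analysis is needed, and the argument works directly for all $hu\in L^2(\R_+,r^{2\zeta}dr)$ without a $C_c^\infty$-to-$L^2$ extension step.

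Two points in your outline deserve tightening. First, the phrase ``local symmetry of $s^{2\zeta}\nu_\zeta(r,s)$ about $s=r$'' is not literally correct: that function is not even in $s-r$. What makes the symmetric PV converge is that its \emph{leading} diagonal singularity $|r-s|^{-1-\alpha}$ is even, while the next correction multiplied by the linear part of $1-h(s)/h(r)$ contributes only $O(|r-s|^{1-\alpha})$, which is locally integrable for $\alpha<2$. To justify this you need the explicit hypergeometric expression in \eqref{eq:defnuell1}, not merely the two-sided bound $\nu_\zeta(r,s)\sim(r+s)^{-2\zeta}|r-s|^{-1-\alpha}$. Second, the extension to general $u\in L^2$ via ``Fatou plus dominated convergence'' is incomplete as stated: Fatou yields only inequalities, and dominated convergence for the Hardy term needs a dominating function you have not produced. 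The clean fix is to take $u_n\to u$ in the $\ce_\zeta$-form norm (available by Lemma~\ref{besseldomainalpha} for $u\in\D(\ce_\zeta)$); then Hardy's inequality forces $\int r^{-\alpha}|u_n-u|^2r^{2\zeta}dr\to0$, so the identity for $u_n$ passes to the limit, and closedness of $\ci_{\zeta,\eta}$ (itself proved by Fatou) identifies the limit as $\ci_{\zeta,\eta}[u]$.
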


The proof relies on the following result from the integral analysis in \cite{Bogdanetal2024}.

\begin{lemma}[{\cite[Lemma~3.14]{Bogdanetal2024}}]
  \label{jaklem23}
  Let $\zeta\in(-1/2,\infty)$, $\alpha\in(0,2)$, $\eta\in(-\alpha,0)$. Then
  \begin{align}
    \label{eq:jaklem232}
    \begin{split}
      \int_0^\infty ds\, s^{2\zeta} p_\zeta^{(\alpha)}(t,r,s) s^{-\eta}
      = r^{-\eta} - \Psi_\zeta(\eta)\int_0^t d\tau \int_0^\infty ds\, s^{2\zeta} p_\zeta^{(\alpha)}(\tau,r,s) s^{-\eta-\alpha}.
    \end{split}
  \end{align}
\end{lemma}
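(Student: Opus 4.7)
The identity \eqref{eq:jaklem232} is the rigorous form of Duhamel's principle applied to the power function $h(r)=r^{-\eta}$, viewed as a distributional eigenfunction of the generator $L$ of $p_\zeta^{(\alpha)}$ with ``eigenvalue'' $\Psi_\zeta(\eta)r^{-\alpha}$. Formally, $Lh = \Psi_\zeta(\eta) r^{-\alpha} h$ would give $\partial_t(p_\zeta^{(\alpha)}(t,\cdot,\cdot)h)(r) = -\Psi_\zeta(\eta)(p_\zeta^{(\alpha)}(t,\cdot,\cdot)(h r^{-\alpha}))(r)$, and integrating from $t=0$ yields \eqref{eq:jaklem232}. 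The challenge is to make this rigorous despite $h\notin L^2(\R_+, r^{2\zeta}\,dr)$.

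First I would verify a priori finiteness. By scaling \eqref{eq:scalingalpha} and the sharp bound \eqref{eq:heatkernelalpha1weightedsubordinatedboundsfinal}, both sides of \eqref{eq:jaklem232} are finite and continuous in $(t,r)\in(0,\infty)^2$: the hypothesis $\eta\in(-\alpha,0)$ gives integrability at $s=0$ (since $2\zeta-\eta>-1$) and at $s=\infty$ (since $\alpha+\eta>0$). By scaling, $u(t,r):=\int_0^\infty s^{2\zeta-\eta}p_\zeta^{(\alpha)}(t,r,s)\,ds$ satisfies $u(t,r)=t^{-\eta/\alpha}F(r/t^{1/\alpha})$ with $F\in C((0,\infty))$, and $u(t,r)\to r^{-\eta}$ as $t\to 0^+$ by dominated convergence (approximate-identity property). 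The key algebraic input is the \emph{weak eigenfunction relation}
\[
  \ce_\zeta(\phi,h) := \tfrac12\iint_{(\R_+)^2}(\phi(r)-\phi(s))(h(r)-h(s))\,\nu_{\zeta}(r,s)\,(rs)^{2\zeta}\,dr\,ds = \Psi_\zeta(\eta)\int_0^\infty \phi(r) h(r) r^{-\alpha}r^{2\zeta}\,dr
\]
for all $\phi\in C_c^\infty(\R_+)$. This I would obtain either by polarizing the ground state representation \eqref{eq:hardyremainderagaintransformedlimit} for $\eta\in(0,(2\zeta+1-\alpha)/2]$ (observing that $\ci_{\zeta,\eta}(\phi,h)=0$ since the integrand vanishes because $h/h\equiv 1$) and extending to $\eta\in(-\alpha,0)$ via analytic continuation in $\eta$, or by direct computation exploiting the homogeneity $\nu_{\zeta}(\lambda r,\lambda s)=\lambda^{-2\zeta-1-\alpha}\nu_{\zeta}(r,s)$ to reduce the double integral to a one-dimensional Mellin-type integral whose value matches $\Psi_\zeta(\eta)$ via standard Gamma-function identities.

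With these ingredients, I would introduce cutoffs $h_n:=h\chi_n\in L^2(\R_+,r^{2\zeta}\,dr)\cap\D(\ce_\zeta)$ with $\chi_n\nearrow 1$ smoothly. By the semigroup/Dirichlet-form calculus for the regular form $(\ce_\zeta,\D(\ce_\zeta))$ (Lemma~\ref{besseldomainalpha}), for every $\phi\in C_c^\infty(\R_+)$,
\[
  \langle \phi, h_n - p_\zeta^{(\alpha)}(t,\cdot,\cdot)h_n\rangle_{L^2(\R_+,r^{2\zeta}\,dr)} = \int_0^t \ce_\zeta\bigl(p_\zeta^{(\alpha)}(\tau,\cdot,\cdot)\phi,\,h_n\bigr)\,d\tau,
\]
using symmetry of both $\ce_\zeta$ and $p_\zeta^{(\alpha)}$. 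Passing $n\to\infty$: the left side converges to $\langle\phi, h-u(t,\cdot)\rangle$ by dominated convergence via the bounds from the first step; on the right side, the weak eigenfunction relation applied with $g_\tau:=p_\zeta^{(\alpha)}(\tau,\cdot,\cdot)\phi$ yields $\ce_\zeta(g_\tau,h_n)\to \Psi_\zeta(\eta)\langle g_\tau, h r^{-\alpha}\rangle = \Psi_\zeta(\eta)\langle\phi, p_\zeta^{(\alpha)}(\tau,\cdot,\cdot)(h r^{-\alpha})\rangle$. Since $\phi\in C_c^\infty(\R_+)$ is arbitrary and all integrands are continuous in $r$, the pointwise identity \eqref{eq:jaklem232} follows.

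The main obstacle is bridging the weak eigenfunction relation (clean for $\phi\in C_c^\infty$) with the Dirichlet-form Duhamel identity, which requires $g_\tau = p_\zeta^{(\alpha)}(\tau,\cdot,\cdot)\phi$: this is smooth but has only polynomial decay. Extending the eigenfunction relation to such $g_\tau$ demands either a density argument using the heavy-tail bounds of \eqref{eq:heatkernelalpha1weightedsubordinatedboundsfinal}, or uniform-in-$n$ tail control inside the symmetric double integral defining $\ce_\zeta(g_\tau,h_n)$. It is precisely the restriction $\eta\in(-\alpha,0)$ that ensures $h r^{-\alpha}=r^{-\eta-\alpha}$ integrates convergently against the kernel at both $s=0$ and $s=\infty$ and that the required tail estimates hold uniformly in the cutoff.
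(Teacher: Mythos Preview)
The paper does not prove this lemma; it is quoted verbatim from the companion paper \cite{Bogdanetal2024} and used as a black box in the proof of Lemma~\ref{gsreprrepulsive}. So there is no ``paper's own proof'' to compare against here.

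That said, your strategy is the right heuristic---$r^{-\eta}$ is a generalized eigenfunction of the generator, and \eqref{eq:jaklem232} is the integrated form of $\partial_t(P_t h)=-P_t(Lh)$---but the proposal is a sketch with a real gap at exactly the place you flag. The Dirichlet-form Duhamel identity you write involves $\ce_\zeta(p_\zeta^{(\alpha)}(\tau,\cdot,\cdot)\phi,\,h_n)$, and passing $n\to\infty$ requires \emph{both} that $\ce_\zeta(g_\tau,h_n)\to\ce_\zeta(g_\tau,h)$ (i.e., that the double integral defining the form converges absolutely for the pair $(g_\tau,h)$ and that dominated convergence applies) \emph{and} that the weak eigenfunction identity extends from $C_c^\infty(\R_+)$ to $g_\tau=p_\zeta^{(\alpha)}(\tau,\cdot,\cdot)\phi$. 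Neither is carried out. The second point is delicate: the eigenfunction identity for $\phi\in C_c^\infty$ is obtained by symmetrizing a singular integral over $(r,s)$, and the symmetrization relies on the compact support of $\phi$ to exchange orders of integration. For $g_\tau$, which decays like $r^{-2\zeta-1-\alpha}$ at infinity, you would need to redo that computation with explicit tail control, which is possible (the exponent budget works precisely because $-\alpha<\eta<0$) but is a nontrivial computation, not a one-liner.

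The route taken in \cite{Bogdanetal2024} is almost certainly more direct: one first establishes the identity for $\alpha=2$ by explicit computation with the Bessel heat kernel \eqref{eq:defpheatalpha2} (where $\fl_\zeta(r^{-\eta})=\Psi_\zeta(\eta)r^{-\eta-2}$ is an elementary differentiation), and then transfers to $\alpha<2$ via subordination \eqref{eq:defpheatalpha}, using Fubini and the identity $\int_0^\infty\sigma_t^{(\alpha/2)}(\tau)\tau^{\beta}\,d\tau = c_\beta t^{2\beta/\alpha}$ for suitable $\beta$ together with Lemma~\ref{integralana1}-type bounds to justify the interchanges. That approach sidesteps the Dirichlet-form machinery entirely and stays at the level of absolutely convergent integrals.
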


\begin{proof}[Proof of Lemma~\ref{gsreprrepulsive}]
  It suffices to show $\ce_\zeta[hu]-\ci_{\zeta,\eta}[hu]=\Psi_\zeta(\eta)\int_{\R_+}dr\, r^{2\zeta-2\eta-\alpha}|u(r)|^2$ for $hu\in L^2(\R_+,r^{2\zeta}dr)$. By linearity and polarization, we may and will assume that $u\geq0$. By $\lim_{t\to0}t^{-1}p_\zeta^{(\alpha)}(t,r,s)=\nu_\zeta(r,s)$, $t^{-1}p_\zeta^{(\alpha)}(t,r,s)\lesssim\nu_\zeta(r,s)$, dominated convergence, symmetry,
  and integrating \eqref{eq:jaklem232} against $r^{2\zeta-\eta}|u(r)|^2dr$, i.e.,
  \begin{align}
    \begin{split}
      & \iint_{\R_+\times\R_+} dr\,ds\, (rs)^{2\zeta-\eta}p_\zeta^{(\alpha)}(t,r,s)|u(r)|^2 \\
      & \quad = \int_0^\infty dr\, r^{2\zeta-\eta}|u(r)|^2\left[r^{-\eta} - \Psi_\zeta(\eta)\int_0^t d\tau \int_0^\infty ds\, s^{2\zeta} p_\zeta^{(\alpha)}(\tau,r,s) s^{-\alpha-\eta}\right],
    \end{split}
  \end{align}
  we get
  \begin{align}
    \begin{split}
      \ce_\zeta[hu] - \ci_{\zeta,\eta}[hu]
      = \Psi_\zeta(\eta) \lim_{t\to0}\frac1t \iint_{\R_+\times\R_+}dr\,ds\, (rs)^{2\zeta-\eta}|u(r)|^2 s^{-\alpha} \int_0^t d\tau\, p_\zeta^{(\alpha)}(\tau,r,s).
    \end{split}
  \end{align}
  Thus, it suffices to show
  \begin{align}
    \label{eq:approximationidentityaveraged}
    \lim_{t\to0}\frac1t \iint_{\R_+\times\R_+}dr\,ds\, (rs)^{2\zeta-\eta}|u(r)|^2 s^{-\alpha} \int_0^t d\tau\, p_\zeta^{(\alpha)}(\tau,r,s)
    = \int_0^\infty dr\, r^{2\zeta-2\eta}q(r)|u(r)|^2.
  \end{align}
  Since $p_\zeta^{(\alpha)}(r,s)(rs)^{\zeta}$ is the fundamental solution of the heat equation $(\partial_t+U^{-1}\fl_\zeta U)u=0$ with the unitary $U:L^2(\R_+,dr)\to L^2(\R_+,r^{2\zeta}dr)$ defined by $(Uf)(r)=r^{-\zeta}f(r)$,
  \begin{align}
    \label{eq:approximationidentityheat}
    \lim_{\tau\to0}p_\zeta^{(\alpha)}(\tau,r,s) = (rs)^{-\zeta} \delta(r-s).
  \end{align}
  Thus, \eqref{eq:approximationidentityaveraged} follows from \eqref{eq:approximationidentityheat} and the mean value theorem, once we show that $t\mapsto p_\zeta^{(\alpha)}(t,r,s)$ is continuous for all $r,s>0$ with $r\neq s$. For $\alpha=2$, this follows from the explicit formula \eqref{eq:defpheatalpha2}. For $\alpha\in(0,2)$, subordination and the continuity and uniform boundedness of $t\mapsto \sigma_t^{(\alpha/2)}(\tau)$ (see \cite{Pollard1946} or \cite[Proposition~B.1]{BogdanMerz2024}) imply the continuity of $t\mapsto p_\zeta^{(\alpha)}(t,r,s)$ for all $r,s>0$ with $r\neq s$ by dominated convergence.
\end{proof}

\subsubsection{Proof of equality of $\ci_{\zeta,\eta}$ and $\ce_{\zeta,\eta}$ for $\eta<0$}


\begin{proposition}
  \label{jakprop317}
  Let $\zeta\in(-1/2,\infty)$, $\alpha\in(0,2\wedge(2\zeta+1))$, and $\eta\in(-\alpha,0)$.
  Then $(\ce_{\zeta,\eta},\D(\ce_{\zeta,\eta}))$ is a symmetric regular Dirichlet form on $L^2(\R_+,r^{2\zeta}dr)$ with core $C_c^\infty(\R_+)$.
\end{proposition}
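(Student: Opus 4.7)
The plan is to follow the same blueprint as Theorem~\ref{bogthm51}, with sign changes throughout reflecting $\Psi_\zeta(\eta) < 0$ for $\eta \in (-\alpha, 0)$. First, the closedness, symmetry, and non-negativity of $(\ce_{\zeta,\eta}, \D(\ce_{\zeta,\eta}))$ on $L^2(\R_+, r^{2\zeta}\,dr)$ follow directly from the semigroup--form correspondence \cite[Theorem~1.3.1, Lemmas~1.3.2 and~1.3.4]{Fukushimaetal2011}, since Theorem~\ref{propertiesschrodheatkernel}(6) provides a symmetric, strongly continuous contraction semigroup $\{p_{\zeta,\eta}^{(\alpha)}(t,\cdot,\cdot)\}_{t\geq 0}$. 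Positivity preservation of $p_{\zeta,\eta}^{(\alpha)}$ is part of the construction in \cite{Bogdanetal2024}, and the sub-Markov bound $\int_0^\infty p_{\zeta,\eta}^{(\alpha)}(t,r,s)\, s^{2\zeta}\,ds \leq 1$ follows from the pointwise inequality $p_{\zeta,\eta}^{(\alpha)} \leq p_\zeta^{(\alpha)}$ of Theorem~\ref{propertiesschrodheatkernel}(5) (valid for $\eta < 0$) combined with the conservativeness \eqref{eq:normalizedalpha} of the unperturbed kernel. This makes $\ce_{\zeta,\eta}$ a symmetric Dirichlet form.

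The regularity and core assertions will reduce to establishing the form identity $\ce_{\zeta,\eta} = \ci_{\zeta,\eta}$ on $L^2(\R_+, r^{2\zeta}\,dr)$. The plan is to prove it by a bounded-potential approximation that is dual to the one used in the proof of Lemma~\ref{boglem51}. I would truncate the negative singular weight from below by $q^\epsilon := q \vee (-1/\epsilon)$, and let $\tilde p_{\zeta,\eta}^{(\alpha,\epsilon)}$ be the Schr\"odinger perturbation of $p_\zeta^{(\alpha)}$ by the bounded potential $q^\epsilon$, constructed via the iterated Duhamel formula exactly as in the proof of Lemma~\ref{boglem51}. Since $q^\epsilon \in L^\infty$, this series converges absolutely and gives a symmetric, sub-Markovian, strongly continuous semigroup on $L^2(\R_+, r^{2\zeta}\,dr)$ whose generating form is $\ce_\zeta[\cdot] - \int q^\epsilon |\cdot|^2\, r^{2\zeta}\,dr$ on $\D(\ce_\zeta)$. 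Sending $\epsilon \to 0$, $q^\epsilon$ decreases pointwise to $q$, the forms $\ce_\zeta - q^\epsilon$ increase monotonically to $\ce_\zeta - q = \ci_{\zeta,\eta}$ (the last equality by Lemma~\ref{gsreprrepulsive}), and Kato's monotone convergence theorem for symmetric forms identifies the strong semigroup limit with the one generated by $\ci_{\zeta,\eta}$. On the other hand, the Feynman--Kac representation of $\tilde p_{\zeta,\eta}^{(\alpha,\epsilon)}$ together with monotonicity of $\epsilon\mapsto\exp(-\int_0^t |q^\epsilon|(X_\tau)\,d\tau)$ identifies the same limit with $p_{\zeta,\eta}^{(\alpha)}$, giving $\ce_{\zeta,\eta} = \ci_{\zeta,\eta}$.

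With this identity in hand, Hardy's inequality $\int w^2\, r^{2\zeta-\alpha}\,dr \leq \ce_\zeta[w]/\kappa_c$, with $\kappa_c := \Psi_\zeta((2\zeta+1-\alpha)/2)$, shows that the form norms of $\ce_{\zeta,\eta}$ and $\ce_\zeta$ are equivalent on their common domain $\D(\ce_\zeta)$, the proportionality constant $|\Psi_\zeta(\eta)|/\kappa_c$ being finite for each fixed $\eta \in (-\alpha, 0)$. Lemma~\ref{besseldomainalpha} then transfers the density of $C_c^\infty(\R_+)$ from $(\D(\ce_\zeta),\sqrt{\ce_\zeta[\cdot]}+\|\cdot\|_{L^2(\R_+, r^{2\zeta}dr)})$ to $(\D(\ce_{\zeta,\eta}), \sqrt{\ce_{\zeta,\eta}[\cdot]}+\|\cdot\|_{L^2(\R_+, r^{2\zeta}dr)})$ under the equivalent form norm, and $C_c^\infty(\R_+)$ is classically uniformly dense in $C_0(\R_+)$, so $\ce_{\zeta,\eta}$ is a regular Dirichlet form with core $C_c^\infty(\R_+)$.

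The main technical obstacle will be verifying the monotone passage $\tilde p_{\zeta,\eta}^{(\alpha,\epsilon)} \searrow p_{\zeta,\eta}^{(\alpha)}$ as $\epsilon\to 0$ at the level of semigroups and relating it to monotone form convergence. In Lemma~\ref{boglem51} this was straightforward because the defining Duhamel series for $q^\epsilon = q\wedge 1/\epsilon \geq 0$ had non-negative terms, allowing pointwise monotone comparisons directly. Here the truncation $q^\epsilon = q \vee (-1/\epsilon) \leq 0$ produces an alternating-sign series, so monotonicity in $\epsilon$ cannot be read off term-by-term; instead, one proceeds through the Feynman--Kac representation of $\tilde p_{\zeta,\eta}^{(\alpha,\epsilon)}$ on the $p_\zeta^{(\alpha)}$-bridge, where monotonicity of the exponential functional is direct, and combines it with the pointwise heat kernel bounds of Proposition~\ref{heatkernelalpha1subordinatedboundsfinal} to justify the dominated-convergence step entering Kato's monotone convergence theorem.
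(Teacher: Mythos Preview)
Your approach is correct but does considerably more than the paper's proof of this proposition. The paper separates the regularity statement (Proposition~\ref{jakprop317}) from the form identity $\ce_{\zeta,\eta}=\ci_{\zeta,\eta}$ (Theorem~\ref{jakprop318}). For Proposition~\ref{jakprop317} itself, the paper's argument is just a few lines: the Dirichlet form property follows from the sub-Markovian semigroup exactly as you say; for regularity, the paper invokes Hardy's inequality to assert that the form norms $\sqrt{\ce_{\zeta,\eta}[\cdot]}+\|\cdot\|_{L^2}$ and $\sqrt{\ce_\zeta[\cdot]}+\|\cdot\|_{L^2}$ are equivalent, and then transfers the core $C_c^\infty(\R_+)$ from Lemma~\ref{besseldomainalpha}. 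The paper does not establish the full identity $\ce_{\zeta,\eta}=\ci_{\zeta,\eta}$ at this point.

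Your route to that identity---bounded truncation $q^\epsilon=q\vee(-1/\epsilon)$, Feynman--Kac on the $p_\zeta^{(\alpha)}$-bridge to get monotonicity, and Kato's monotone form convergence---is workable, but the paper's proof of Theorem~\ref{jakprop318} is more elementary and sidesteps the sign difficulty you flag. It uses the invariance $p_{\zeta,\eta}^{(\alpha)}(t,\cdot,\cdot)h=h$ from Theorem~\ref{propertiesschrodheatkernel}(4) to rewrite $\tfrac1t\langle u,(1-p_{\zeta,\eta}^{(\alpha)}(t,\cdot,\cdot))u\rangle$ as $\tfrac{1}{2}\iint (rs)^{2\zeta-\eta}\,t^{-1}p_{\zeta,\eta}^{(\alpha)}(t,r,s)\,|u(r)/h(r)-u(s)/h(s)|^2$, then passes to the limit $t\to0$: one inequality by Fatou and Lemma~\ref{nuellrepulsive}, the converse by dominated convergence via the uniform pointwise bound $p_{\zeta,\eta}^{(\alpha)}(t,r,s)\le p_\zeta^{(\alpha)}(t,r,s)\lesssim t\,\nu_\zeta(r,s)$ from Theorem~\ref{propertiesschrodheatkernel}(5) and~\eqref{eq:defnuell4}. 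No Feynman--Kac representation or monotone form limit is needed; the repulsive case is in this sense simpler than the attractive one, not harder.
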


\begin{proof}
  By Lemma~\ref{besseldomainalpha}, $(\ce_\zeta,\D(\ce_\zeta))$ is a symmetric regular Dirichlet form with core $C_c^\infty(\R_+)$. 
  Of course, 
  \begin{align}
    \int_0^\infty \frac{|u(r)|^2}{r^\alpha}\,r^{2\zeta}\,dr < \infty, \quad u\in C_c^\infty(\R_+).
  \end{align}
  In particular, $C_c^\infty(\R_+)\subseteq \D(\ce_\zeta) \cap L^2(\R_+,r^{2\zeta-\alpha}dr) \subseteq\D(\ce_{\zeta,\eta})$.

  It is easy to prove that $(\ce_{\zeta,\eta},\D(\ce_{\zeta,\eta}))$ is a symmetric Dirichlet form on $L^2(\R_+,r^{2\zeta}dr)$. To show that this form is regular, we prove that $C_c^\infty(\R_+)$ is dense in $\D(\ce_{\zeta,\eta})$ with the norm $\sqrt{\ce_{\zeta,\eta}[\cdot]}+\|\cdot\|_{L^2(\R_+,r^{2\zeta}dr)}$. By Hardy's inequality,
  \begin{align}
    \int_0^\infty \frac{|u(r)|^2}{r^\alpha}\,r^{2\zeta}\,dr \lesssim \ce_\zeta[u].
  \end{align}
  Thus, the norms $\sqrt{\ce_{\zeta,\eta}[\cdot]}+\|\cdot\|_{L^2(\R_+,r^{2\zeta}dr)}$ and $\sqrt{\ce_{\zeta}[\cdot]}+\|\cdot\|_{L^2(\R_+,r^{2\zeta}dr)}$ are equivalent. Thus, since $(\ce_\zeta,\D(\ce_\zeta))$ is regular with core $C_c^\infty(\R_+)$, so is $(\ce_{\zeta,\eta},\D(\ce_{\zeta,\eta}))$.
\end{proof}

In the following, we prove two results that are similar to Lemma~4.9 and~4.19 in~\cite{Bogdanetal2024}. We defer their proofs to Appendix~\ref{a:auxiliary}.
Recall the definition of $p_t^{(1,D)}$ in~\eqref{eq:feynmankactransformed}
and let
\begin{align}
  \label{eq:defgattractive}
  G_\eta(t,r,s) := \int_0^t d\tau \int_0^\infty dz\, z^{2\zeta-\alpha-\eta}
  \left(\frac{r+s+z}{s+z}\right)^{2\zeta} p_\zeta^{(\alpha)}(\tau,r,z)
\end{align}\index{$G_\eta(t,r,s)$}
and
\begin{align}
  \tilde G(t,r,s) := \int_0^t d\tau \int_0^{s} dz\, z^{2\zeta-\alpha} \left(\frac{t^{1/\alpha}+r+s}{(t-\tau)^{1/\alpha}+z+s}\right)^{2\zeta} p_\zeta^{(\alpha)}(\tau,r,z).
\end{align}\index{$\tilde G(t,r,s)$}Note that in general $G_\eta(t,r,s)\neq G_\eta(t,s,r)$ and $\tilde G(t,r,s)\neq \tilde G(t,s,r)$. By the scaling \eqref{eq:scalingalpha} of $p_\zeta^{(\alpha)}(\tau,r,z)$,
\begin{align}
  \label{eq:scalinggattractive}
  G_\eta(t,r,s) = t^{-\frac{\eta}{\alpha}}G_\eta(1,r/t^{1/\alpha},s/t^{1/\alpha})
  \quad \text{and} \quad
  \tilde G(t,r,s) = \tilde G(1,r/t^{1/\alpha},s/t^{1/\alpha}).
\end{align}

The following is an analog of \cite[Lemma~4.9]{Bogdanetal2024}.
\begin{lemma}
  \label{glogestimatesattractive}
  Let $\zeta\in(-1/2,\infty)$, $\alpha\in(0,2)$, $\eta\in(-\alpha,2\zeta+1-\alpha)$, and $G_\eta(t,r,s)$ be as in~\eqref{eq:defgattractive}. Then, for all $r,s>0$,
  \begin{align}
    \label{eq:glogestimatesattractive}
    \begin{split}
      G_\eta(1,r,s)
      & \lesssim (r\wedge s)^{-\eta}(rs)^{-\alpha} + s^{-\alpha-\eta} + \log(1+1/r^\alpha)\one_{\eta=0} \\
      & \quad + (1+r)^{-\alpha-\eta}\one_{\eta<0} + r^{-(\alpha+\eta)}(1\wedge r^\alpha)\one_{\eta>0}
    \end{split}
  \end{align}
  and, for $\zeta\in(-1/2,0)$,
  \begin{align}
    \label{eq:glogestimatesattractivenew}
    \begin{split}
      \tilde G(1,r,s) \lesssim \left(\frac{1+r+s}{1+s}\right)^{2\zeta}\log(1+r^{-\alpha}). 
    \end{split}
  \end{align}
\end{lemma}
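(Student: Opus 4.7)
The strategy is to substitute the sharp two-sided bound \eqref{eq:heatkernelalpha1weightedsubordinatedboundsfinal} for $p_\zeta^{(\alpha)}$ directly into the definitions of $G_\eta(1,r,s)$ and $\tilde G(1,r,s)$, and then reduce everything to elementary integrals by splitting the $z$-domain into regions dictated by the relative sizes of $r$, $z$, and $\tau^{1/\alpha}$. This follows the template of \cite[Lemma~4.9]{Bogdanetal2024}; the only differences here are the extra weight $z^{-\eta}$ inside $G_\eta$ and the restricted range $z\in(0,s)$ inside $\tilde G$, both of which are mild modifications.

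For $G_\eta$, I first dispose of the factor $((r+s+z)/(s+z))^{2\zeta}$ by distinguishing two cases. When $\zeta\geq 0$, I use $(r+s+z)/(s+z)\leq 1+r/(s+z)$ to bound it by $C(1+(r/(s+z))^{2\zeta})$; when $\zeta<0$ it is trivially bounded by $1$ since $r+s+z\geq s+z$. After this reduction, the $s$-dependence is confined to a single sub-integral involving $(s+z)^{-2\zeta}$, which after the $z$-integration produces the terms $(r\wedge s)^{-\eta}(rs)^{-\alpha}$ and $s^{-\alpha-\eta}$ on the right-hand side of \eqref{eq:glogestimatesattractive}. The remaining contribution depends only on $r$ and reduces to estimating $\int_0^1 d\tau\int_0^\infty z^{2\zeta-\alpha-\eta}p_\zeta^{(\alpha)}(\tau,r,z)\,dz$. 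I split this according to the three natural regions $z\leq r/2$, $r/2\leq z\leq 2r$, and $z\geq 2r$, and in each one I use the corresponding piecewise simplification of \eqref{eq:heatkernelalpha1weightedsubordinatedboundsfinal}. Performing the $\tau$-integral on the near-diagonal piece $z\approx r$ produces a power $r^{-(\alpha+\eta)}$ when $\eta\neq 0$ and the logarithm $\log(1+r^{-\alpha})$ when $\eta=0$, yielding the remaining three terms of \eqref{eq:glogestimatesattractive}. The constraint $\eta<2\zeta+1-\alpha$ ensures integrability of $z^{2\zeta-\alpha-\eta}$ on the tail, and $\eta>-\alpha$ ensures convergence near the origin.

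For $\tilde G$ with $\zeta<0$, the weight is handled by noting that $z\leq s$ and $\tau\leq 1$ imply $(1-\tau)^{1/\alpha}+z+s\leq 1+2s\lesssim 1+s$, so, since $2\zeta<0$,
\[
  \left(\frac{1+r+s}{(1-\tau)^{1/\alpha}+z+s}\right)^{\!2\zeta}
  \;\lesssim\; \left(\frac{1+r+s}{1+s}\right)^{\!2\zeta}.
\]
It then remains to show $\int_0^1 d\tau\int_0^s z^{2\zeta-\alpha}\,p_\zeta^{(\alpha)}(\tau,r,z)\,dz\lesssim \log(1+r^{-\alpha})$. Substituting \eqref{eq:heatkernelalpha1weightedsubordinatedboundsfinal} and decomposing the $(\tau,z)$-region exactly as in Step~2 above, the non-trivial contribution comes from the near-diagonal part $z\sim r$ (when $r\leq s$); there, a $\tau$-integral of the schematic form $\int_0^1 d\tau/(\tau^{1/\alpha}+r)^{\alpha}$ produces exactly $\log(1+r^{-\alpha})$. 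Integrability of $z^{2\zeta-\alpha}$ near $z=0$ is guaranteed by $\alpha<2\zeta+1$.

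The main obstacle is the borderline $\tau$-integrations that produce the logarithms in both estimates: one must verify that they do not interact badly with the singular weight $z^{2\zeta-\alpha-\eta}$ and, in \eqref{eq:glogestimatesattractivenew}, that no further $s$-singularity is generated, so that the dependence on $s$ appears only through the explicit prefactor $((1+r+s)/(1+s))^{2\zeta}$. Everything else is careful bookkeeping of powers and a routine decomposition of the integration domain.
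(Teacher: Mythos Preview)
Your approach is correct and essentially parallels the paper's proof, with only cosmetic reorganization. The paper handles the weight $((r+s+z)/(s+z))^{2\zeta}$ via the equivalence $\frac{r+s+z}{s+z}\sim \frac{r+z}{s+z}\one_{\{r>s\vee z\}}+\one_{\{r<s\vee z\}}$, which is the indicator-function version of your algebraic bound $1+(r/(s+z))^{2\zeta}$ for $\zeta\ge0$ (and your trivial bound $\le1$ for $\zeta<0$); the resulting ``$s$-dependent'' and ``$r$-only'' pieces are then estimated in the same way. The one economy you miss is that the paper does not redo the three-region split for the $r$-only integral $\int_0^1\!\int_0^\infty z^{2\zeta-\alpha-\eta}p_\zeta^{(\alpha)}(\tau,r,z)\,dz\,d\tau$: it simply invokes Lemma~\ref{integralana1} (the case $\delta=\alpha+\eta$), which already contains exactly the trichotomy producing the $\log$ term at $\eta=0$ and the power terms for $\eta\lessgtr0$. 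Likewise for $\tilde G$, the paper bounds the time-dependent weight just as you do and then cites Lemma~\ref{integralana1} with $\delta=\alpha$ to get $\log(1+r^{-\alpha})$ in one line, rather than reproving it through the near-diagonal analysis.
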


The following is an analog of \cite[Lemma~4.19]{Bogdanetal2024}.
\begin{lemma}
  \label{jaklem37}
  Let $\zeta\in(-1/2,\infty)$ and $\alpha\in(0,2)$. Then, for all $r,s,t>0$, we have
  \begin{align}
    \label{eq:jaklem37}
    \begin{split}
      p_t^{(1,D)}(r,s)
      & \lesssim p_\zeta^{(\alpha)}(t,r,s) \left[ \left(G_0(t,r,s) + G_0(t,s,r)\right)\one_{\zeta\geq0} \right. \\ & \qquad\qquad\qquad \left. + \left( \frac{t}{(r\wedge s)^{\alpha}} + \tilde G(t,r,s)+\tilde G(t,s,r)\right)\one_{\zeta<0}\right].
    \end{split}
  \end{align}
\end{lemma}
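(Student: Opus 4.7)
The strategy is to unfold
\[
p_t^{(1,D)}(r,s)=\Psi_\zeta(\eta)\int_0^t\!\!\int_0^\infty z^{2\zeta-\alpha}\,p_\zeta^{(\alpha)}(\tau,r,z)\,p_\zeta^{(\alpha)}(t-\tau,z,s)\,dz\,d\tau,
\]
split the time integral as $\int_0^{t/2}+\int_{t/2}^{t}$, and exploit symmetry. The substitution $\tau\mapsto t-\tau$ combined with the spatial symmetry of $p_\zeta^{(\alpha)}$ maps the contribution of $[t/2,t]$ onto that of $[0,t/2]$ with $r\leftrightarrow s$, which is what produces the two terms $G_0(t,r,s)+G_0(t,s,r)$ (respectively $\tilde G(t,r,s)+\tilde G(t,s,r)$) in \eqref{eq:jaklem37}. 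It therefore suffices to bound the integral over $\tau\in[0,t/2]$.

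On this range $t-\tau\asymp t$, so the plan is to replace $p_\zeta^{(\alpha)}(t-\tau,z,s)$ by $p_\zeta^{(\alpha)}(t,r,s)\cdot W_\zeta(r,s,z,\tau)$, with $W_\zeta=\bigl(\tfrac{r+s+z}{s+z}\bigr)^{2\zeta}$ for $\zeta\geq 0$ and $W_\zeta=\bigl(\tfrac{t^{1/\alpha}+r+s}{(t-\tau)^{1/\alpha}+z+s}\bigr)^{2\zeta}\one_{\{z\leq s\}}$ for $\zeta<0$: once this substitution is achieved, integrating $W_\zeta\,z^{2\zeta-\alpha}\,p_\zeta^{(\alpha)}(\tau,r,z)$ against $dz\,d\tau$ recovers exactly $G_0(t,r,s)$, respectively $\tilde G(t,r,s)$. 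The substitution is carried out by invoking the sharp bounds \eqref{eq:heatkernelalpha1weightedsubordinatedboundsfinal}, which reduce the problem to an algebraic comparison of the denominators $F(t,r,s)$ and $F(t-\tau,z,s)$. The summand $t^{(1+\alpha)/\alpha}(t^{1/\alpha}+r+s)^{2\zeta}$ is controlled via the elementary identity $(t^{1/\alpha}+r+s)(s+z)\leq(t^{1/\alpha}+s+z)(r+s+z)$ combined with monotonicity of $x\mapsto x^{2\zeta}$ in the direction appropriate to the sign of $\zeta$. The summand $|r-s|^{1+\alpha}(r+s)^{2\zeta}$ is handled via the triangle inequality $|r-s|\leq|r-z|+|z-s|$: the subcase $|z-s|\gtrsim|r-s|$ is immediate, while in the subcase $|r-z|\gtrsim|r-s|$ the correspondingly small factor $p_\zeta^{(\alpha)}(\tau,r,z)$ supplied by \eqref{eq:heatkernelalpha1weightedsubordinatedboundsfinal} absorbs the excess in the quotient $p_\zeta^{(\alpha)}(t-\tau,z,s)/p_\zeta^{(\alpha)}(t,r,s)$.

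For $\zeta<0$ the algebraic comparison above is effective only on $\{z\leq s\}$, since $x\mapsto x^{2\zeta}$ is decreasing; the residual region $\{z>s\}$ is estimated separately and yields the additive $t/(r\wedge s)^{\alpha}$ term. There we combine the sharp bound \eqref{eq:heatkernelalpha1weightedsubordinatedboundsfinal} for $p_\zeta^{(\alpha)}(t-\tau,z,s)$ with the elementary estimate
\[
\int_s^\infty z^{2\zeta-\alpha}\,p_\zeta^{(\alpha)}(\tau,r,z)\,dz\leq s^{-\alpha}\int_0^\infty z^{2\zeta}\,p_\zeta^{(\alpha)}(\tau,r,z)\,dz=s^{-\alpha}\leq(r\wedge s)^{-\alpha},
\]
where we used $z\geq s$ and the normalization \eqref{eq:normalizedalpha}; a further case analysis comparing $F(t-\tau,z,s)$ with $F(t,r,s)$ yields, after $\tau$-integration over $[0,t/2]$, the bound proportional to $p_\zeta^{(\alpha)}(t,r,s)\cdot t(r\wedge s)^{-\alpha}$. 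The principal obstacle throughout is the multi-scale algebraic comparison $F(t,r,s)\lesssim W_\zeta\cdot F(t-\tau,z,s)$: because $F$ consists of two competing summands in three length scales and one time scale, and $x\mapsto x^{2\zeta}$ has opposite monotonicity for the two signs of $\zeta$, the verification requires a careful case analysis that also dictates the separate treatment of $\{z>s\}$ for $\zeta<0$ and the appearance of the extra $t/(r\wedge s)^{\alpha}$ term.
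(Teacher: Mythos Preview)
Your overall plan differs from the paper's: the paper does not split the time integral or perform any direct algebraic comparison of the sharp bounds \eqref{eq:heatkernelalpha1weightedsubordinatedboundsfinal}, but simply invokes the ready-made 3G inequalities \eqref{eq:3gheatalpha2} (for $\zeta\geq0$) and \eqref{eq:3gheatalphanew} (for $\zeta<0$), together with Chapman--Kolmogorov for the region $z>r\wedge s$ when $\zeta<0$. Your route amounts to re-deriving these 3G inequalities inline, which is legitimate in principle but substantially longer.

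However, your sketch contains a genuine gap. The central substitution you propose for $\zeta\geq0$, namely $p_\zeta^{(\alpha)}(t-\tau,z,s)\lesssim p_\zeta^{(\alpha)}(t,r,s)\bigl(\tfrac{r+s+z}{s+z}\bigr)^{2\zeta}$ on $\tau\in[0,t/2]$, is false when $|z-s|\ll|r-s|$ and $|r-s|\gg t^{1/\alpha}$: in this regime $p_\zeta^{(\alpha)}(t-\tau,z,s)$ is near its on-diagonal maximum while $p_\zeta^{(\alpha)}(t,r,s)$ is small of order $t|r-s|^{-1-\alpha}(r+s)^{-2\zeta}$, and the weight does not close the gap (the ratio blows up like $(|r-s|/t^{1/\alpha})^{1+\alpha}$). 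Your fix, that the ``small factor $p_\zeta^{(\alpha)}(\tau,r,z)$ absorbs the excess'', cannot rescue the scheme as stated: if you spend $p_\zeta^{(\alpha)}(\tau,r,z)$ to absorb, you no longer have it available to form the integrand of $G_0(t,r,s)$. What actually happens in this bad subcase is that the product is controlled by $p_\zeta^{(\alpha)}(t,r,s)\bigl(\tfrac{r+s+z}{r+z}\bigr)^{2\zeta}p_\zeta^{(\alpha)}(t-\tau,z,s)$, which upon integration contributes to $G_0(t,s,r)$, not $G_0(t,r,s)$. In other words, both $G_0$ terms already arise on the half-interval $[0,t/2]$; the time-splitting symmetry does not reduce the problem to a single $G_0$, and you are effectively forced back to the two-term 3G structure \eqref{eq:3gheatalpha2}.

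For $\zeta<0$ the handling of $\{z>s\}$ is also not right as written: you cannot simultaneously apply the normalisation \eqref{eq:normalizedalpha} to integrate out $p_\zeta^{(\alpha)}(\tau,r,z)$ in $z$ \emph{and} retain $p_\zeta^{(\alpha)}(t-\tau,z,s)$ for a comparison of $F$'s, and the pointwise bound $p_\zeta^{(\alpha)}(t-\tau,z,s)\lesssim p_\zeta^{(\alpha)}(t,r,s)$ fails for the same reason as above. The clean argument here is the one in the paper: on $\{z>r\wedge s\}$ pull out $z^{-\alpha}\leq(r\wedge s)^{-\alpha}$ and apply Chapman--Kolmogorov \eqref{eq:chapman}, which immediately gives $\int_0^t d\tau\int_{r\wedge s}^\infty z^{2\zeta}p_\zeta^{(\alpha)}(\tau,r,z)p_\zeta^{(\alpha)}(t-\tau,z,s)\,dz\leq t\,p_\zeta^{(\alpha)}(t,r,s)$.
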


The limit in the following lemma is natural in view of $p_{\zeta,\eta}^{(\alpha)}(t,r,s)\leq p_\zeta^{(\alpha)}(t,r,s)$.

\begin{lemma}
  \label{nuellrepulsive}
  Let $\zeta\in(-1/2,\infty)$, $\alpha\in(0,2\wedge(2\zeta+1))$, and $\eta\in(-\alpha,0)$. Then,
  \begin{align}
    \label{eq:nuellrepulsive}
    \lim_{t\to0}\frac{p_{\zeta,\eta}^{(\alpha)}(t,r,s)}{t}
    = \nu_\zeta(r,s),\quad r,s>0,\ r\neq s.
  \end{align}
\end{lemma}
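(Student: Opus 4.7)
The plan is to reduce~\eqref{eq:nuellrepulsive} to the known limit $\lim_{t\to 0}p_\zeta^{(\alpha)}(t,r,s)/t=\nu_\zeta(r,s)$ from~\eqref{eq:defnuell1} by showing that the first-order Duhamel correction is $o(t)$ as $t\to 0$ for fixed $r\neq s$. To implement this, I would apply the Duhamel formula~\eqref{eq:duhamelclassictransformed} to write
\begin{align*}
p_{\zeta,\eta}^{(\alpha)}(t,r,s) - p_\zeta^{(\alpha)}(t,r,s)
= \int_0^t d\tau\int_0^\infty dz\, z^{2\zeta}\,p_\zeta^{(\alpha)}(\tau,r,z)\,q(z)\,p_{\zeta,\eta}^{(\alpha)}(t-\tau,z,s).
\end{align*}
Since $\eta\in(-\alpha,0)$, the function $\Psi_\zeta(\eta)$ is negative by the discussion after~\eqref{eq:defpsietazetaOLD}, so $q(z)=\Psi_\zeta(\eta)/z^\alpha\le 0$. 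By Theorem~\ref{propertiesschrodheatkernel}(5), $0\le p_{\zeta,\eta}^{(\alpha)}(t-\tau,z,s)\le p_\zeta^{(\alpha)}(t-\tau,z,s)$. Taking absolute values therefore gives
\begin{align*}
\bigl|p_{\zeta,\eta}^{(\alpha)}(t,r,s)-p_\zeta^{(\alpha)}(t,r,s)\bigr|
\le |\Psi_\zeta(\eta)|\int_0^t d\tau\int_0^\infty dz\, z^{2\zeta-\alpha}\,p_\zeta^{(\alpha)}(\tau,r,z)\,p_\zeta^{(\alpha)}(t-\tau,z,s),
\end{align*}
which is precisely $|p_t^{(1,D)}(r,s)|$ in the notation of~\eqref{eq:feynmankactransformed}.

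Next, I would invoke Lemma~\ref{jaklem37} to get
\begin{align*}
|p_t^{(1,D)}(r,s)|\lesssim p_\zeta^{(\alpha)}(t,r,s)\cdot \Bigl[\bigl(G_0(t,r,s)+G_0(t,s,r)\bigr)\one_{\zeta\ge 0}+\bigl(\tfrac{t}{(r\wedge s)^\alpha}+\tilde G(t,r,s)+\tilde G(t,s,r)\bigr)\one_{\zeta<0}\Bigr].
\end{align*}
The scaling~\eqref{eq:scalinggattractive} with $\eta=0$ gives $G_0(t,r,s)=G_0(1,r/t^{1/\alpha},s/t^{1/\alpha})$ and similarly for $\tilde G$. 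As $t\to 0$ with $r,s>0$ fixed, both $r/t^{1/\alpha}$ and $s/t^{1/\alpha}$ tend to $\infty$, and the explicit bound~\eqref{eq:glogestimatesattractive} shows that each summand $(RS)^{-\alpha}$, $S^{-\alpha}$, $\log(1+R^{-\alpha})$ in $G_0(1,R,S)$ tends to $0$; likewise~\eqref{eq:glogestimatesattractivenew} forces $\tilde G(1,R,S)\to 0$ since $(1+R+S)/(1+S)$ remains bounded while $\log(1+R^{-\alpha})\to 0$. The term $t/(r\wedge s)^\alpha$ trivially vanishes. Hence the bracketed expression is $o(1)$ as $t\to 0$.

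Combining the two previous displays,
\begin{align*}
\frac{\bigl|p_{\zeta,\eta}^{(\alpha)}(t,r,s)-p_\zeta^{(\alpha)}(t,r,s)\bigr|}{t}
\lesssim |\Psi_\zeta(\eta)|\cdot \frac{p_\zeta^{(\alpha)}(t,r,s)}{t}\cdot o_{t\to 0}(1).
\end{align*}
By~\eqref{eq:defnuell1}, $p_\zeta^{(\alpha)}(t,r,s)/t\to \nu_\zeta(r,s)<\infty$ for $r\neq s$, so the right-hand side tends to $0$. Adding and subtracting $p_\zeta^{(\alpha)}(t,r,s)/t$ and taking $t\to 0$ yields $\lim_{t\to 0}p_{\zeta,\eta}^{(\alpha)}(t,r,s)/t=\nu_\zeta(r,s)$, as required. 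The main technical obstacle is verifying that the $G_0$ and $\tilde G$ bounds from Lemma~\ref{glogestimatesattractive} are sharp enough to yield pointwise decay under the rescaling $t\to 0$; but since $r,s$ are fixed and distinct, the explicit estimates suffice without any further uniformity argument.
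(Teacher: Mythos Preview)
Your proof is correct and follows essentially the same route as the paper: reduce to the Duhamel remainder, dominate $p_{\zeta,\eta}^{(\alpha)}$ by $p_\zeta^{(\alpha)}$ using Theorem~\ref{propertiesschrodheatkernel}(5), then invoke Lemma~\ref{jaklem37} together with the scaling~\eqref{eq:scalinggattractive} and the explicit bounds of Lemma~\ref{glogestimatesattractive} to show the bracketed factor is $o(1)$. You spell out the limiting behaviour of each term in $G_0(1,R,S)$ and $\tilde G(1,R,S)$ as $R,S\to\infty$ more explicitly than the paper does, but the argument is the same.
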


\begin{proof}
  By Duhamel's formula, the uniform bound \eqref{eq:defnuell4}, i.e., $p_\zeta^{(\alpha)}(t,r,s)/t\lesssim\nu_\zeta(r,s)$, and the definition of $\nu_\zeta(r,s)$ in \eqref{eq:defnuell1}, it suffices to show that
  \begin{align}
    \begin{split}
      \lim_{t\to0}\frac1t \int_0^t d\tau \int_0^\infty dz\, z^{2\zeta} p_{\zeta,\eta}^{(\alpha)}(t-\tau,r,z)z^{-\alpha}p_\zeta^{(\alpha)}(\tau,z,s) = 0.
    \end{split}
  \end{align}
  But this follows from $p_{\zeta,\eta}^{(\alpha)}(t,r,s)\leq p_\zeta^{(\alpha)}(t,r,s)$, Lemma~\ref{jaklem37} and the bounds for $G_0(t,r,s)$ and $\tilde G(t,r,s)$ in Lemma~\ref{glogestimatesattractive} with the scaling \eqref{eq:scalinggattractive}.
\end{proof}

We are now ready to prove the equivalence of $\ce_{\zeta,\eta}$ and $\ci_{\zeta,\eta}$ for $\eta\in(-\alpha,0)$.

\begin{theorem}
  \label{jakprop318}
  Let $\zeta\in[0,\infty)$, $\alpha\in(0,2\wedge(2\zeta+1))$, and $\eta\in(-\alpha,0)$.
  Then, $\D(\ce_{\zeta,\eta})=\D(\ci_{\zeta,\eta})$ and
  \begin{align}
    \ce_{\zeta,\eta}[u] = \ci_{\zeta,\eta}[u],\quad u\in\D(\ce_{\zeta,\eta}).
  \end{align}
\end{theorem}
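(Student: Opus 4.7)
The plan is to bypass the domain-approximation machinery used for $\eta\ge 0$ in Theorem~\ref{bogthm51} and instead establish $\ce_{\zeta,\eta}=\ci_{\zeta,\eta}$ by matching upper and lower bounds on the approximating form
$$\ce_{\zeta,\eta}(t)[v]=t^{-1}\langle v,v-p_{\zeta,\eta}^{(\alpha)}(t,\cdot,\cdot)v\rangle_{L^2(\R_+,r^{2\zeta}dr)}.$$
The decisive advantage in the regime $\eta<0$ will be the pointwise comparison $p_{\zeta,\eta}^{(\alpha)}\le p_\zeta^{(\alpha)}$ from Theorem~\ref{propertiesschrodheatkernel}(5), which dominates the perturbed kernel by the free one---the opposite inequality to the one that forces the $\epsilon$-approximation argument in the case $\eta>0$.

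First I would record the algebraic identity that already drives the proof of Lemma~\ref{boglem51}: invariance of $h(r)=r^{-\eta}$ under $p_{\zeta,\eta}^{(\alpha)}$ (Theorem~\ref{propertiesschrodheatkernel}(4)) together with the symmetry of the kernel yields, for every $v\in L^2(\R_+,r^{2\zeta}dr)$ and every $t>0$,
$$\ce_{\zeta,\eta}(t)[v]=\frac{1}{2t}\iint_{\R_+\times\R_+} p_{\zeta,\eta}^{(\alpha)}(t,r,s)\left|\frac{v(r)}{h(r)}-\frac{v(s)}{h(s)}\right|^2 h(r)h(s)(rs)^{2\zeta}\,dr\,ds,$$
an identity whose nonnegative integrand is insensitive to the sign of $\eta$. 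From this display the lower bound $\ce_{\zeta,\eta}[v]\ge \ci_{\zeta,\eta}[v]$ on all of $L^2(\R_+,r^{2\zeta}dr)$ will follow by Fatou's lemma, using the pointwise limit $p_{\zeta,\eta}^{(\alpha)}(t,r,s)/t\to \nu_\zeta(r,s)$ for $r\neq s$ from Lemma~\ref{nuellrepulsive}, together with the monotone convergence $\ce_{\zeta,\eta}[v]=\lim_{t\to 0^+}\ce_{\zeta,\eta}(t)[v]$ valid for any symmetric contraction semigroup; in particular $\D(\ce_{\zeta,\eta})\subseteq \D(\ci_{\zeta,\eta})$.

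Next, to obtain the matching upper bound on $\D(\ci_{\zeta,\eta})$, I would use $p_{\zeta,\eta}^{(\alpha)}\le p_\zeta^{(\alpha)}$ in the same identity to replace the perturbed kernel by the unperturbed one, and then apply dominated convergence: the uniform estimate $p_\zeta^{(\alpha)}(t,r,s)/t\lesssim \nu_\zeta(r,s)$ from~\eqref{eq:defnuell4} controls the integrand in $t$ by a constant times the integrand defining $\ci_{\zeta,\eta}[v]$, while the pointwise limit $p_\zeta^{(\alpha)}(t,r,s)/t\to \nu_\zeta(r,s)$ from~\eqref{eq:defnuell1} supplies the limit value. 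This gives $\limsup_{t\to 0^+}\ce_{\zeta,\eta}(t)[v]\le \ci_{\zeta,\eta}[v]$, hence $\D(\ci_{\zeta,\eta})\subseteq \D(\ce_{\zeta,\eta})$, completing the identification. The hard part of the argument is really concentrated in Lemma~\ref{nuellrepulsive}, whose proof rests on the short-time control of the first Duhamel iterate $p_t^{(1,D)}$ in Lemma~\ref{jaklem37} and the auxiliary logarithmic estimates of Lemma~\ref{glogestimatesattractive}; once that pointwise asymptotic is in hand, no density or approximation manipulations of the type needed in the case $\eta>0$ will be required.
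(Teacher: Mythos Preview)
Your proposal is correct and follows essentially the same route as the paper's proof: rewrite $\ce_{\zeta,\eta}(t)[v]$ via the invariance $p_{\zeta,\eta}^{(\alpha)}(t,\cdot,\cdot)h=h$ and the symmetry of the kernel, then use Fatou together with Lemma~\ref{nuellrepulsive} for the lower bound $\ce_{\zeta,\eta}\ge\ci_{\zeta,\eta}$, and the domination $p_{\zeta,\eta}^{(\alpha)}\le p_\zeta^{(\alpha)}\lesssim t\,\nu_\zeta$ with dominated convergence for the reverse inequality. The only cosmetic difference is that in your upper-bound step you first replace the perturbed kernel by the free one and then invoke~\eqref{eq:defnuell1}, whereas the paper keeps the perturbed kernel throughout and uses Lemma~\ref{nuellrepulsive} again for the pointwise limit; both variants are equally valid.
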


\begin{proof}
  By Theorem~\ref{propertiesschrodheatkernel}(4),
  \begin{align}
    \begin{split}
      & \langle u,u-p_{\zeta,\eta}^{(\alpha)}(t,\cdot,\cdot)u\rangle_{L^2(\R_+,r^{2\zeta}dr)}
      = \int_{\R_+}dr\, r^{2\zeta}\, \overline{u(r)}\left(u(r) - \int_{\R_+}ds\, s^{2\zeta} p_{\zeta,\eta}^{(\alpha)}(t,r,s) u(s)\right) \\
      & \quad = \int_{\R_+}dr\, r^{2\zeta}\, \overline{u(r)}\left(\frac{u(r)}{h(r)} \int_0^\infty ds\, s^{2\zeta} p_{\zeta,\eta}^{(\alpha)}(t,r,s)h(s) - \int_0^\infty ds\, s^{2\zeta} p_{\zeta,\eta}^{(\alpha)}(t,r,s)h(s)\frac{u(s)}{h(s)}\right) \\
      & \quad = \iint_{\R_+\times\R_+}dr\,ds\, (rs)^{2\zeta-\eta}\, \frac{\overline{u(r)}}{h(r)} p_{\zeta,\eta}^{(\alpha)}(t,r,s)\left(\frac{u(r)}{h(r)} - \frac{u(s)}{h(s)}\right).
    \end{split}
  \end{align}
  Hence, by symmetry,
  \begin{align}
    \langle u,u-p_{\zeta,\eta}^{(\alpha)}(t,\cdot,\cdot)u\rangle_{L^2(\R_+,r^{2\zeta}dr)}
    = \frac12 \iint_{\R_+\times\R_+}dr\,ds\, (rs)^{2\zeta-\eta}\, p_{\zeta,\eta}^{(\alpha)}(t,r,s)\left|\frac{u(r)}{h(r)} - \frac{u(s)}{h(s)}\right|^2.
  \end{align}
  Now, for $u\in\D(\ce_{\zeta,\eta})$, by Fatou's lemma and \eqref{eq:nuellrepulsive}, we have
  \begin{align}
    \begin{split}
      \ce_{\zeta,\eta}[u]
      & = \lim_{t\to0}\frac1t\langle u,(1-p_{\zeta,\eta}^{(\alpha)}(t,\cdot,\cdot))u\rangle_{L^2(\R_+,r^{2\zeta}dr)} \\
      & = \lim_{t\to0}\frac12\iint_{\R_+\times\R_+}\frac{p_{\zeta,\eta}^{(\alpha)}(t,r,s)}{t} \left|\frac{u(r)}{h(r)} - \frac{u(s)}{h(s)}\right|^2 (rs)^{2\zeta-\eta}\,dr\,ds \\
      & \geq \frac12\iint\limits_{\R_+\times\R_+}\liminf_{t\to0}\frac{p_{\zeta,\eta}^{(\alpha)}(t,r,s)}{t} \left|\frac{u(r)}{h(r)} - \frac{u(s)}{h(s)}\right|^2 (rs)^{2\zeta-\eta}\,dr\,ds \\
      & = \frac12\iint\limits_{\R_+\times\R_+}dr\,ds\, (rs)^{2\zeta-\eta}\nu_\zeta(r,s) \left|\frac{u(r)}{h(r)} - \frac{u(s)}{h(s)}\right|^2
        = \ci_{\zeta,\eta}[u].
    \end{split}
  \end{align}
  This shows that $\D(\ce_{\zeta,\eta})\subseteq\D(\ci_{\zeta,\eta})$. To prove the reversed inclusion, let $u\in\D(\ci_{\zeta,\eta})$. Then, by $p_{\zeta,\eta}^{(\alpha)}(t,r,s)\leq p_\zeta^{(\alpha)}(t,r,s)\lesssim t\nu_\zeta(r,s)$ for all $r,s,t>0$, \eqref{eq:defnuell4}, by the dominated convergence and \eqref{eq:nuellrepulsive},
  \begin{align}
    \begin{split}
      \ce_{\zeta,\eta}[u]
      & = \lim_{t\to0}\frac1t\langle u,(1-p_{\zeta,\eta}^{(\alpha)}(t,\cdot,\cdot))u\rangle_{L^2(\R_+,r^{2\zeta}dr)} \\
      & = \lim_{t\to0}\frac12 \iint_{\R_+\times\R_+}\frac{p_{\zeta,\eta}^{(\alpha)}(t,r,s)}{t} \left|\frac{u(r)}{h(r)} - \frac{u(s)}{h(s)}\right|^2 h(r)h(s)\, (rs)^{2\zeta}\,dr\,ds \\
      & = \frac12 \iint_{\R_+\times\R_+}dr\,ds\, (rs)^{2\zeta-\eta}\nu_\zeta(r,s) \left|\frac{u(r)}{h(r)} - \frac{u(s)}{h(s)}\right|^2
      = \ci_{\zeta,\eta}[u].
    \end{split}
  \end{align}
  This shows $u\in\D(\ce_{\zeta,\eta})$ and hence $\D(\ci_{\zeta,\eta})\subseteq\D(\ce_{\zeta,\eta})$
  and $\ce_{\zeta,\eta}[u]=\ci_{\zeta,\eta}[u]$ for all $u\in\D(\ce_{\zeta,\eta})$.
\end{proof}

\section{Proof of Theorem~\ref{mainresult}}
\label{s:proof}

The estimates \eqref{eq:mainresultfreealpha}--\eqref{eq:mainresultfree2} (concerning $\eta=0$) follow from Theorem~\ref{relationhardyformheatkernel}(2) and Proposition~\ref{heatkernelalpha1subordinatedboundsfinal} with $\zeta=(d_\ell-1)/2$.

Formula~\eqref{eq:mainresultalpha} (concerning $\alpha<2$ and $\eta\neq0$) follows from Theorems~\ref{relationhardyformheatkernel}(2) and~\ref{mainresultgen} with $\zeta=(d_\ell-1)/2$.

Finally, \eqref{eq:mainresult2} (concerning $\alpha=2$ and $\eta\neq0$) is proved by Metafune, Negro, and Spina \cite[Theorem~4.12, Proposition~4.14]{Metafuneetal2018} using that $\cl_{\Phi_{d_\ell}^{(2)}(\eta),\ell}$ in $L^2(\R_+,r^{d_\ell-1}dr)$ is unitarily equivalent to $\fl_{(d_\ell-1)/2-\eta}$ in $L^2(\R_+,r^{d_\ell-1-2\eta}dr)$. Thus,
\begin{align}
  \exp\left(-t\cl_{\Phi_{d_\ell}^{(2)}(\eta),\ell}\right)(r,s) = (rs)^{-\eta} \exp\left(-t\fl_{(d_\ell-1)/2-\eta}\right)(r,s) \quad \text{for $\alpha=2$}.
\end{align}
This, together with 
\begin{align}
  \label{eq:heatkernellzeta}
  \me{-t\fl_\zeta}(r,s) = p_\zeta^{(2)}(t,r,s), \quad t,r,s>0,
\end{align}
\eqref{eq:easybounds2} and elementary estimates, gives \eqref{eq:mainresult2}.
\qed

\appendix

\section{Fourier--Bessel transform}

For the following results, see \cite[Appendix~A]{BogdanMerz2024}.
For $d\in\N$ and $\ell\in L_d$, the Fourier--Bessel transform $\F_{d_\ell}$,
\begin{align}
  \label{eq:deffourierbessel}
  v\mapsto (\F_{d_\ell} v)(k) := i^{-\ell}\int_0^\infty dr\, \sqrt{kr}J_{\frac{d_\ell-2}{2}}(kr) v(r),
  \quad k\in\R_+
\end{align}\index{$\F_\ell$}is initially defined for functions $v\in L^1(\R_+,dr)$. Note also that this transform is well-defined because $(d_\ell-1)/2>-1/2$. The same arguments used to extend the Fourier transform on $L^1(\R^d)$ to $L^2(\R^d)$ allow one to extend the Fourier--Bessel transform to a unitary operator on $L^2(\R_+,dr)$. By abuse of notation, we denote this extension by $\F_{d_\ell}$, too, and observe the symmetry $\F_{d_\ell}=\F_{d_\ell}^*$, which is apparent from \eqref{eq:deffourierbessel}. Using the unitary operator
\begin{align}
  \label{eq:defdoob}
  L^2(\R_+,r^{d_\ell-1}dr) \ni u \mapsto (Uu)(r)=r^{(d_\ell-1)/2}u(r) \in L^2(\R_+,dr)
\end{align}
and the plane wave expansion, we obtain the intertwining relation
\begin{align}
  \label{eq:fourierbesselintertwine2}
  \left([u]_{\ell,m}\right)^\wedge(\xi) = \left(U^*[\F_{d_\ell} Uu]_{\ell,m}\right)(\xi),
  \quad \xi\in\R^d,\, u\in L^2(\R_+,r^{d_\ell-1}dr).
\end{align}

The following lemma says that the Fourier--Bessel transform diagonalizes $F(|D|)$, when restricted to a fixed $V_{\ell}$ or $V_{\ell,m}$. In the following, we write $F(|D|)^{1/2}$ as short for $|F(|D|)|^{1/2}\sgn(F(|D|))$.

\begin{lemma}[{\cite[Lemma~4.1]{BogdanMerz2024}}]
  \label{fourierbessel}
  Let $d\in\N$, $\ell,\ell'\in L_d$, $m\in M_\ell$, and $m'\in M_{\ell'}$. Let $F\in L_\loc^1(\R_+)$ and $\cq(F(|D|)):=\{g\in L^2(\R^d):\, \int_{\R^d}|F(|\xi|)||\hat g(\xi)|^2\,d\xi<\infty\}$. Then, for $[u]_{\ell,m},\,[v]_{\ell',m'}\in \cq(F(|D|))$,
  \begin{align}
    \label{eq:fourierbessel1}
    \begin{split}
      & \langle |F(|D|)|^{1/2} [v]_{\ell',m'},F(|D|)^{1/2} [u]_{\ell,m}\rangle_{L^2(\R^d)} \\
      & \quad = \delta_{\ell,\ell'}\delta_{m,m'}\langle \F_{d_\ell} Uv,F \cdot \F_{d_\ell} Uu\rangle_{L^2(\R_+,dr)}
    \end{split}
  \end{align}
  with the unitary operator $U:L^2(\R_+,r^{d_\ell-1}dr)\to L^2(\R_+,dr)$ defined in \eqref{eq:defdoob}.
\end{lemma}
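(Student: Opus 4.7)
The plan is to reduce the claim to Plancherel's theorem, the intertwining relation \eqref{eq:fourierbesselintertwine2}, and the orthonormality of spherical harmonics. Since $F(|D|)$ is defined by functional calculus as a Fourier multiplier with symbol $F(|\xi|)$, the quadratic form of $F(|D|)$ admits the Plancherel representation
\begin{align*}
  \langle |F(|D|)|^{1/2}[v]_{\ell',m'}, F(|D|)^{1/2}[u]_{\ell,m}\rangle_{L^2(\R^d)}
  = \int_{\R^d} F(|\xi|)\, \overline{\widehat{[v]_{\ell',m'}}(\xi)}\, \widehat{[u]_{\ell,m}}(\xi)\, d\xi,
\end{align*}
which is absolutely convergent by the hypothesis $[u]_{\ell,m},[v]_{\ell',m'}\in\cq(F(|D|))$ together with the Cauchy--Schwarz inequality. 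The first step is then to substitute both Fourier transforms via \eqref{eq:fourierbesselintertwine2}.

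Next, I pass to polar coordinates $\xi = k\omega$, $k>0$, $\omega\in\bs^{d-1}$, with $d\xi = k^{d-1}\,dk\,d\omega$. By the definition of $[\,\cdot\,]_{\ell,m}$ in \eqref{eq:defeqclassellm} and of $U$ in \eqref{eq:defdoob}, we have
\begin{align*}
  \bigl(U^*[\F_{d_\ell}Uu]_{\ell,m}\bigr)(k\omega)
  = k^{-(d_\ell-1)/2}\, (\F_{d_\ell}Uu)(k)\, k^{\ell}\, Y_{\ell,m}(\omega),
\end{align*}
and analogously for $v$ with $(\ell,m)$ replaced by $(\ell',m')$. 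The integrand then factorizes into a radial factor and an angular factor, so the integral splits. The angular integral is
\begin{align*}
  \int_{\bs^{d-1}} \overline{Y_{\ell',m'}(\omega)}\, Y_{\ell,m}(\omega)\, d\omega = \delta_{\ell,\ell'}\delta_{m,m'}
\end{align*}
by orthonormality, which produces the Kronecker deltas and in particular annihilates all cross terms $(\ell,m)\neq(\ell',m')$. On the diagonal, the powers of $k$ combine as $k^{d-1+2\ell}\cdot k^{-(d_\ell-1)}=1$ (since $d-1+2\ell=d_\ell-1$), so the remaining radial integral is
\begin{align*}
  \int_0^\infty F(k)\, \overline{(\F_{d_\ell}Uv)(k)}\, (\F_{d_\ell}Uu)(k)\, dk
  = \langle \F_{d_\ell}Uv, F\cdot \F_{d_\ell}Uu\rangle_{L^2(\R_+,dr)},
\end{align*}
which is the desired right-hand side.

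The main obstacle is bookkeeping: one must track the weights $k^{d-1}$ from the polar Jacobian, $k^{2\ell}$ from $[\,\cdot\,]_{\ell,m}$ (for both $u$ and $v$ with possibly different $\ell$), and $k^{-(d_\ell-1)/2}$ coming from the action of $U^*$ on the radial variable, and verify that they combine to yield the flat Lebesgue measure $dk$ exactly when $\ell=\ell'$. A secondary point worth a brief remark is that everything remains well-defined when $\ell\neq\ell'$: although the intertwining formula is written with $\F_{d_\ell}$ (respectively $\F_{d_{\ell'}}$) on the $u$ (respectively $v$) side, the spherical orthogonality makes the radial data $\F_{d_{\ell'}}Uv$ irrelevant in the off-diagonal case, so no identification issue arises.
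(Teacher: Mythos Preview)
Your proof is correct and follows the natural route: Plancherel, the intertwining relation \eqref{eq:fourierbesselintertwine2}, polar coordinates, and orthonormality of spherical harmonics, with the weight bookkeeping $k^{d-1+2\ell}\cdot k^{-(d_\ell-1)}=1$ checked on the diagonal. The paper does not supply its own proof of this lemma here---it is quoted from \cite[Lemma~4.1]{BogdanMerz2024}---but the surrounding material in the appendix (the definition of $\F_{d_\ell}$, the unitary $U$, and the intertwining relation \eqref{eq:fourierbesselintertwine2}) is set up precisely so that the argument you give goes through, so your approach is exactly what is intended.
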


\section{Proofs of auxiliary statements}
\label{a:auxiliary}

To prove Lemma~\ref{glogestimatesattractive}, we recall the following lemma from \cite{Bogdanetal2024}.

\begin{lemma}[{\cite[Lemma~3.1]{Bogdanetal2024}}]
 \label{integralana1}
 Let $\zeta\in(-1/2,\infty)$, $\alpha\in(0,2]$ and $\delta \in (0,2\zeta+1)$. Then,
 \begin{align}
    \label{eq:hbetagammataurintegratedtransformed}
    \begin{split}
       \int_0^t d\tau\, \int_0^\infty ds\, s^{2\zeta} p_\zeta^{(\alpha)}(\tau,r,s) s^{-\delta} \; \sim \;
        \begin{cases}
          t (r^\alpha \vee t)^{-\delta/\alpha} & \quad \text{for}\ \delta<\alpha \\
          \log(1+\frac{t}{r^\alpha}) & \quad \text{for}\ \delta=\alpha \\
          r^{-\delta}  (t\wedge r^\alpha)  & \quad \text{for}\ \delta>\alpha
        \end{cases}.
    \end{split}
  \end{align}
\end{lemma}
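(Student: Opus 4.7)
The plan is to carry out the spatial integral first and then integrate in $\tau$. Set
\begin{align*}
  J(\tau,r) := \int_0^\infty s^{2\zeta-\delta}\, p_\zeta^{(\alpha)}(\tau,r,s)\,ds,
\end{align*}
so the left-hand side of \eqref{eq:hbetagammataurintegratedtransformed} equals $\int_0^t J(\tau,r)\,d\tau$. The scaling \eqref{eq:scalingalpha} together with the substitution $s = \tau^{1/\alpha}\tilde{s}$ yields
\begin{align*}
  J(\tau,r) = \tau^{-\delta/\alpha}\, J\bigl(1,\, r\tau^{-1/\alpha}\bigr),
\end{align*}
so it suffices to estimate $J(1,\rho)$ for $\rho \geq 0$.

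The key claim is $J(1,\rho) \sim (1 \vee \rho)^{-\delta}$, equivalently $J(\tau,r) \sim (r^\alpha \vee \tau)^{-\delta/\alpha}$. I prove this using the sharp two-sided pointwise bounds for $p_\zeta^{(\alpha)}(1,\rho,s)$ from Proposition \ref{heatkernelalpha1subordinatedboundsfinal}: the polynomial bound \eqref{eq:heatkernelalpha1weightedsubordinatedboundsfinal} when $\alpha<2$ and the Gaussian-Bessel bound \eqref{eq:easybounds2} when $\alpha=2$. For $\rho \in [0,1]$, the condition $\delta \in (0,\, 2\zeta+1)$ makes $s^{2\zeta-\delta}$ integrable at $0$, while the $s^{-(1+\alpha+2\zeta)}$-type tail of $p_\zeta^{(\alpha)}(1,0,\cdot)$ makes the integrand integrable at infinity, so $J(1,0)$ is finite and positive; the two-sided bounds applied on a fixed $O(1)$ neighbourhood of $s$ produce $J(1,\rho) \sim 1$ uniformly in $\rho \in [0,1]$. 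For $\rho \geq 1$, I split $\int_0^\infty = \int_0^{\rho/2} + \int_{\rho/2}^{2\rho} + \int_{2\rho}^\infty$. On the middle interval, choosing $|s-\rho|\leq 1$, the denominator of \eqref{eq:heatkernelalpha1weightedsubordinatedboundsfinal} is comparable to $(\rho+s)^{2\zeta} \sim \rho^{2\zeta}$ (and the Gaussian factor in \eqref{eq:easybounds2a} is $\sim 1$), whence $p_\zeta^{(\alpha)}(1,\rho,s) \sim \rho^{-2\zeta}$ there, producing a contribution of order $\rho^{-\delta}$. The two outer intervals produce at most $O(\rho^{-\delta-\alpha})$ using the tail decay of $p_\zeta^{(\alpha)}$, which is subleading as $\alpha > 0$.

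It then remains to compute $\int_0^t (r^\alpha \vee \tau)^{-\delta/\alpha}\,d\tau$, splitting at $\tau = r^\alpha$ whenever $t \geq r^\alpha$. When $\delta < \alpha$ the exponent $-\delta/\alpha \in (-1,0)$ keeps $\tau^{-\delta/\alpha}$ integrable, and the dominant contribution is the tail $\int_{r^\alpha}^t \tau^{-\delta/\alpha}\,d\tau \sim t^{1-\delta/\alpha}$ for $t \geq r^\alpha$, or the whole piece $tr^{-\delta}$ for $t \leq r^\alpha$, which combine to $\sim t(r^\alpha \vee t)^{-\delta/\alpha}$. When $\delta = \alpha$ the antiderivative is logarithmic and sums to $\sim \log(1 + t/r^\alpha)$. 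When $\delta > \alpha$ the exponent $-\delta/\alpha < -1$ makes the small-$\tau$ portion dominate, giving $\sim r^{-\delta}(t \wedge r^\alpha)$. Each case matches \eqref{eq:hbetagammataurintegratedtransformed}.

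The main technical step is the sharp lower bound $J(1,\rho) \gtrsim \rho^{-\delta}$ for large $\rho$, which needs the uniform lower bound $p_\zeta^{(\alpha)}(1,\rho,s) \gtrsim \rho^{-2\zeta}$ on a fixed-length neighbourhood of $s = \rho$; this is delivered by Proposition \ref{heatkernelalpha1subordinatedboundsfinal} once one checks that the two summands in the denominator of \eqref{eq:heatkernelalpha1weightedsubordinatedboundsfinal} balance correctly. Everything else is routine power-law integration, elementary case analysis, and careful bookkeeping between the $\rho < 1$ and $\rho \geq 1$ regimes.
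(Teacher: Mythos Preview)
The paper does not prove this lemma; it is merely quoted from \cite[Lemma~3.1]{Bogdanetal2024} as an auxiliary input for the proof of Lemma~\ref{glogestimatesattractive}. Your argument is correct and self-contained: the scaling reduction $J(\tau,r)=\tau^{-\delta/\alpha}J(1,r\tau^{-1/\alpha})$, the two-sided estimate $J(1,\rho)\sim(1\vee\rho)^{-\delta}$ obtained from Proposition~\ref{heatkernelalpha1subordinatedboundsfinal} via the near/far splitting, and the elementary $\tau$-integration all go through as you indicate. One minor point worth making explicit in a write-up is that the integrability of $s^{2\zeta-\delta}$ at the origin is exactly where the hypothesis $\delta<2\zeta+1$ enters, and that for $\alpha=2$ the outer pieces $\int_0^{\rho/2}+\int_{2\rho}^\infty$ are in fact exponentially small in $\rho$ (not just $O(\rho^{-\delta-\alpha})$), but this only strengthens your conclusion.
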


\subsection{Proof of Lemma~\ref{glogestimatesattractive}}

We first prove \eqref{eq:glogestimatesattractive}.
Using
\begin{align}
  \label{eq:deffaux}
  \frac{r+z}{s+z} \one_{\{r>s\vee z\}} + \one_{\{r<s\vee z\}}
  \sim \frac{r+s+z}{s+z},
\end{align}
we estimate
\begin{align}
  \label{eq:defgattractivesim}
  G_\eta(t,r,s) \sim \int_0^t d\tau \int_0^\infty dz\, z^{2\zeta} \cdot z^{-\alpha} \cdot z^{-\eta} \left[\left(\frac{r+z}{s+z}\right)^{2\zeta}\one_{\{r>s\vee z\}} + \one_{\{r<s\vee z\}}\right] p_\zeta^{(\alpha)}(\tau,r,z).
\end{align}
We now treat the first summand in the brackets on the right-hand side of \eqref{eq:defgattractivesim}. By \eqref{eq:heatkernelalpha1weightedsubordinatedboundsfinal},
\begin{align}
  (r+z)^{2\zeta} p_\zeta^{(\alpha)}(\tau,r,z)
  \lesssim \frac{\tau}{|r-z|^{1+\alpha} + \tau^{(1+\alpha)/\alpha}}.
\end{align}
Thus,
\begin{align}
  \label{eq:glogestimatesaux1attractive}
  \footnotesize
  \begin{split}
    & \int_0^1 d\tau \int_0^\infty dz\, z^{2\zeta-\alpha-\eta}\left(\frac{r+z}{s+z}\right)^{2\zeta}\one_{\{r>s\vee z\}} p_\zeta^{(\alpha)}(\tau,r,z) \\
    & \quad \lesssim \int_0^1 d\tau \int_0^\infty dz\, z^{2\zeta-\alpha-\eta} \cdot \frac{\tau}{|r-z|^{1+\alpha}+\tau^{(1+\alpha)/\alpha}} \cdot \frac{\one_{\{r>s\}}\one_{\{r>z\}}}{(s+z)^{2\zeta}} \\
    & \quad \lesssim \int_0^1 d\tau \int_0^{s/2}dz\, \left(\frac{z}{s}\right)^{2\zeta} z^{-\alpha-\eta} \frac{\tau\cdot \one_{\{r>s\}}\one_{\{r>z\}}}{r^{1+\alpha}+\tau^{(1+\alpha)/\alpha}}
      + \int_0^1 d\tau \int_{s/2}^\infty dz\, z^{-\alpha-\eta} \frac{\tau\cdot \one_{\{r>s\}}\one_{\{r>z\}}}{|r-z|^{1+\alpha}+\tau^{(1+\alpha)/\alpha}}.
  \end{split}
\end{align}
The first summand on the right of \eqref{eq:glogestimatesaux1attractive} is estimated by
\begin{align}
  \begin{split}
    \int_0^1 d\tau \int_0^{s/2}dz\, \left(\frac{z}{s}\right)^{2\zeta} z^{-\alpha-\eta} \frac{\tau\cdot \one_{\{r>s\}}\one_{\{r>z\}}}{r^{1+\alpha}+\tau^{(1+\alpha)/\alpha}}
    \lesssim \frac{1}{(rs)^{\alpha}(r\wedge s)^{\eta}}.
  \end{split}
\end{align}
We now come to the second summand in \eqref{eq:glogestimatesaux1attractive} and estimate
\begin{align}
  \footnotesize
  \begin{split}
    & \int_0^1 d\tau \int_{s/2}^r dz\, z^{-\alpha-\eta} \frac{\tau}{|r-z|^{1+\alpha}+\tau^{(1+\alpha)/\alpha}}
      \lesssim s^{-\alpha-\eta}\int_0^1 d\tau \int_{0}^r dz \frac{\tau}{\tau^{(1+\alpha)/\alpha}+(r-z)^{1+\alpha}} \\
    & \quad \lesssim s^{-\alpha-\eta} \int_0^1 d\tau \int_0^\infty dz\, \frac{\tau}{\tau^{(1+\alpha)/\alpha} + z^{1+\alpha}}
      \lesssim s^{-\alpha-\eta}.
  \end{split}
\end{align}
Thus, the left-hand side of \eqref{eq:glogestimatesaux1attractive} is bounded by
\begin{align}
  \int_0^1 d\tau \int_0^\infty dz\, z^{2\zeta-\alpha-\eta} \left(\frac{r+z}{s+z}\right)^{2\zeta} \one_{\{r>s\vee z\}} p_\zeta^{(\alpha)}(\tau,r,z)
  \lesssim (r\wedge s)^{-\eta} (rs)^{-\alpha} + s^{-\alpha-\eta}.
\end{align}
We now come to the second summand in \eqref{eq:defgattractivesim}. By Lemma~\ref{integralana1},
\begin{align}
  \begin{split}
    & \int_0^1 d\tau \int_0^\infty dz\, z^{2\zeta} \cdot z^{-\alpha-\eta} \cdot \one_{\{r<s\vee z\}}\cdot p_\zeta^{(\alpha)}(\tau,r,z) \\
    & \quad \lesssim \log(1+1/r^\alpha)\one_{\eta=0} + (1+r^\alpha)^{-1-\eta/\alpha} \one_{\eta<0} + r^{-(\alpha+\eta)}(1\wedge r^\alpha)\one_{\eta>0}.
  \end{split}
\end{align}
This concludes the proof of \eqref{eq:glogestimatesattractive}.

We now prove \eqref{eq:glogestimatesattractivenew}. Using Lemma~\ref{integralana1}, we obtain
\begin{align}
  \begin{split}
    \int_0^1 d\tau \int_0^{s} dz\, z^{2\zeta-\alpha}\frac{(1+r+s)^{2\zeta}}{((1-\tau)^{1/\alpha}+z+s)^{2\zeta}}p_\zeta^{(\alpha)}(\tau,r,z)
    \lesssim \left(\frac{1+r+s}{1+s}\right)^{2\zeta}\log(1+r^{-\alpha}),
  \end{split}
\end{align}
as desired. This concludes the proof. \qed

\subsection{Proof of Lemma~\ref{jaklem37}}

By scaling, it suffices to consider $t=1$. Consider first $\zeta\geq0$. According to the 3G inequality \cite[Theorem~3.1]{BogdanMerz2025}
\begin{align}
  \label{eq:3gheatalpha2}
  & p_\zeta^{(\alpha)}(t,r,z)\cdot p_\zeta^{(\alpha)}(\tau,z,s) \\
  & \ \lesssim p_\zeta^{(\alpha)}(t+\tau,r,s) \cdot \left[ \left(\frac{r+s+z}{s+z}\right)^{2\zeta} p_\zeta^{(\alpha)}(t,r,z) + \left(\frac{r+s+z}{r+z}\right)^{2\zeta} p_\zeta^{(\alpha)}(\tau,z,s)\right] \notag
\end{align}
where $r,s,t,\tau>0$ and $\zeta\geq0$, and to the definition of $G_\eta(t,r,s)$, we get
\begin{align}
  \begin{split}
    & p_1^{(1,D)}(r,s) = \int_0^1 d\tau \int_0^\infty dz\, z^{2\zeta} p_{\zeta}^{(\alpha)}(1-\tau,r,z) q(z) p_\zeta^{(\alpha)}(\tau,z,s) \\
    & \quad \lesssim p_\zeta^{(\alpha)}(1,r,s) \int_0^1 d\tau \int_0^\infty dz\, z^{2\zeta-\alpha} \\
    & \qquad \times \left[\left(\frac{r+s+z}{s+z}\right)^{2\zeta} \cdot p_\zeta^{(\alpha)}(\tau,r,z) 
      + \left(\frac{r+s+z}{r+z}\right)^{2\zeta} \cdot p_\zeta^{(\alpha)}(\tau,z,s)\right] \\
    & \quad = p_\zeta^{(\alpha)}(1,r,s) \left[G_0(1,r,s) + G_0(1,s,r)\right].
  \end{split}
\end{align}
Now let $\zeta\in(-1/2,0)$. Here, we use another 3G inequality, contained in \cite[Lemma~2.5]{Bogdanetal2024}, namely
\begin{align}
  \label{eq:3gheatalphanew}
  & p_\zeta^{(\alpha)}(t,r,z)\,p_\zeta^{(\alpha)}(\tau,z,s) \\
  & \quad \lesssim p_\zeta^{(\alpha)}(t+\tau,r,s) \left((t+\tau)^{1/\alpha}+r+s\right)^{2\zeta} \left[\frac{p_\zeta^{(\alpha)}(t,r,z)}{(\tau^{1/\alpha}+z+s)^{2\zeta}} + \frac{p_\zeta^{(\alpha)}(\tau,z,s)}{(t^{1/\alpha}+r+z)^{2\zeta}} \right], \notag
\end{align}
which holds for all $\zeta\in(-1/2,\infty)$.
Thus, using the Chapman--Kolmogorov equations for small $z$ and the 3G inequality for large $z$ and the definition of $\tilde G(t,r,s)$, we obtain
\begin{align}
  \begin{split}
    p_1^{(1,D)}(r,s)
    & \lesssim \int_0^1 d\tau \int_{r\wedge s}^\infty dz\, z^{2\zeta-\alpha}p_\zeta^{(\alpha)}(1-\tau,r,z)p_\zeta^{(\alpha)}(\tau,z,s) \\
    & \quad + p_\zeta^{(\alpha)}(1,r,s) (1+r+s)^{2\zeta} \\
    & \qquad \times \int_0^1 d\tau \int_0^\infty dz\, z^{2\zeta-\alpha} \left[\frac{p_\zeta^{(\alpha)}(1-\tau,r,z)}{(\tau^{1/\alpha}+z+s)^{2\zeta}}\one_{z<s} + \frac{p_\zeta^{(\alpha)}(\tau,s,z)}{((1-\tau)^{1/\alpha}+z+r)^{2\zeta}}\one_{z<r}\right] \\
    & = p_\zeta^{(\alpha)}(1,r,s)\left((r\wedge s)^{-\alpha} + \tilde G(1,r,s) + \tilde G(1,s,r)\right).
  \end{split}
\end{align}
This concludes the proof. \qed

\begin{remark}
  In \cite{Bogdanetal2024}, we already noted that the two 3G inequalities in \eqref{eq:3gheatalpha2} and~\eqref{eq:3gheatalphanew} do not imply each other. This manifests itself again in the proof of Lemma~\ref{glogestimatesattractive}.
  Proving \eqref{eq:jaklem37} for $\zeta\geq0$ using the 3G inequality in~\eqref{eq:3gheatalphanew} seems to be difficult due to the complicated dependencies on the time-variable and so we use \eqref{eq:3gheatalpha2} instead. On the other hand, when $\zeta\in(-1/2,0)$, it is convenient to use \eqref{eq:3gheatalphanew}.
  %
\end{remark}

\printindex


\newcommand{\etalchar}[1]{$^{#1}$}
\def\cprime{$'$}

\end{document}